\newtheorem{thm}[equation]{Theorem}
\newtheorem{cor}[equation]{Corollary}
\newtheorem{lem}[equation]{Lemma}
\newtheorem{prop}[equation]{Proposition}
\newtheoremstyle{example}{\topsep}{\topsep}%
     {}
     {}
     {\bfseries}
     {.}
     {2pt}
     {\thmname{#1}\thmnumber{ #2}\thmnote{ #3}}
   \theoremstyle{example}
\newtheorem{defi}[equation]{Definition}
\newtheorem{rem}[equation]{Remark}
\newtheorem{exa}[equation]{Example}
\newtheorem{ex}[equation]{Example}
\def\on{\operatorname}
\def\op{{\operatorname{op}}}
\def\bDelta{ {\bf \Delta}}
\def\bLambda{ {\bf \Lambda}}
\def\pLambda{ {\Lambda_{\infty}}}
\def\apLambda{ {\Lambda_{\infty}^{+}}}
\def\Fun{\operatorname{Fun}}
\def\h{\operatorname{h}\!}
\def\colim{\operatorname*{colim}}
\def\Hom{\operatorname{Hom}}
\def\GL{\operatorname{GL}}
\def\RR{\mathbb{R}}
\def\S{\EuScript S}
\def\I{{\mathcal I}}
\def\HP{ {\operatorname{HP}}}
\def\J{{\mathcal J}}
\def\h{\operatorname{h}\!}
\def\Map{\operatorname{Map}}
\def\N{\operatorname{N}}
\def\Sp{{\on{Sp}}}
\def\Cat{{\mathcal Cat}}
\def\Rib{ {\mathcal Rib}}
\def\cm{\langle m \rangle}
\def\cn{\langle n \rangle}
\def\Set{ {\mathcal Set}}
\def\A{ {\EuScript A}}
\def\AA{ {\mathbb A}}
\def\P{{\EuScript P}}
\def\C{{\EuScript C}}
\def\D{{\EuScript D}}
\def\J{ {\EuScript J}}
\def\Set{ {\mathcal Set}}
\def\<<{\langle {}\hskip -.1cm {}\langle}
\def\>>{\rangle \hskip -.1cm \rangle}
\def\Ft{\on{\bar{F}}}
\def\Ff{\on{F}}
\setlist[enumerate,1]{label=(\arabic{*})}
\setlist[enumerate,2]{label=(\alph{*})}
\setlist[enumerate,3]{label=(\roman{*})}
\def\centerarc[#1](#2)(#3:#4:#5)  { \draw[#1] ($(#2)+({#5*cos(#3)},{#5*sin(#3)})$) arc (#3:#4:#5); }
\tikzstyle{dot}=[draw,circle,fill=black,inner sep=0,minimum size=4pt]
\tikzset{
    partial ellipse/.style args={#1:#2:#3}{
        insert path={+ (#1:#3) arc (#1:#2:#3)}
    }
}
\def\lra{\longrightarrow}
\def\Zt{\mathbb{Z}/2\mathbb{Z}}
\def\ZZ{\mathbb{Z}}
\def\Cat{{\EuScript Cat}}
\def\k{\mathbf k}
\def\dgcat{{\EuScript Cat}_{\on{dg}}}
\def\Grp{\on{Grp}}
\def\dgcatt{{\EuScript Cat}_{\on{dg}}^{(2)}}
\def\Lqe{\on{L}_{\on{qe}}(\dgcat)}
\def\Lmo{\on{L}_{\on{mo}}(\dgcat)}
\def\Lmot{\on{L}_{\on{mo}}(\dgcatt)}
\def\UHom{\underline{\on{Hom}}}
\def\Mod{\on{Mod}}
\def\Perf{\on{Perf}}
\def\coh{\on{coh}}
\def\MF{\on{MF}}
\def\ev{\on{ev}}
\def\id{\on{id}}
\def\cm{\langle m \rangle}
\def\cn{\langle n \rangle}
\title{$\AA^1$-homotopy invariants of topological Fukaya categories of surfaces}
\author{Tobias Dyckerhoff }
\email{dyckerho@math.uni-bonn.de}
\address{Hausdorff Center for Mathematics\\Endenicher Allee 62\\53115 Bonn\\Germany}
\keywords{topological Fukaya categories, $\infty$-categories, Segal spaces, K-theory}
\begin{document}

\begin{abstract}
	We provide an explicit formula for localizing $\AA^1$-homotopy invariants of topological
	Fukaya categories of marked surfaces. Following a proposal of Kontsevich, this
	differential $\ZZ$-graded category is defined as global sections of a constructible cosheaf
	of dg categories on any spine of the surface. Our theorem utilizes this sheaf-theoretic
	description to reduce the calculation of invariants to the local case when the surface is a
	boundary-marked disk. At the heart of the proof lies a theory of localization for
	topological Fukaya categories which is a combinatorial analog of Thomason-Trobaugh's theory
	of localization in the context of algebraic $K$-theory for schemes.
\end{abstract}

\maketitle

\tableofcontents

\addcontentsline{toc}{section}{Introduction}

\vfill\eject

\numberwithin{equation}{section}

\section*{Introduction}

According to Kontsevich's proposal \cite{kontsevich:symplectic}, Fukaya categories of Stein manifolds 
can be described as global sections of a constructible cosheaf of dg categories on a possibly singular 
Lagrangian spine onto which the manifold retracts. Various approaches to this proposal have been developed,
see, e.g. \cite{nadler:morse,bocklandt:noncommutative,sibilla:ribbon,haiden:stability,
nadler:cyclic,dk:triangulated,nadler:arboreal}.


The specific case we focus on in this work is the following: Let $S$ be a compact connected Riemann
surface, possibly with boundary, and let $M \subset S$ be a finite nonempty subset of marked points, so that the
complement $S \setminus M$ is a Stein manifold. Any spanning graph $\Gamma$ in $S \setminus M$
provides a Lagrangian spine.  In \cite{dk:triangulated}, the language of {\em cyclic $2$-Segal
spaces} was used to realize Kontsevich's proposal in this situation. For every commutative ring $k$, 
this theory produces a constructible cosheaf of $k$-linear differential $\Zt$-graded categories on
any spanning graph $\Gamma$, shows that the dg category $F(S,M;k)$ of global
sections is independent of the chosen graph, and implies a coherent action of the mapping class
group of the surface on $F(S,M;k)$. It is expected that the resulting dg category is Morita equivalent
to a variant of the wrapped Fukaya category of the surface. We refer to $F(S,M;k)$ as the $k$-linear
{\em topological Fukaya category of the surface $(S,M)$}. If the surface $S \setminus M$ is equipped
with a framing then a {\em paracyclic} version of the above construction can be used to define a
differential $\ZZ$-graded lift of $F(S,M;k)$ (cf.  \cite{lurie:rotation, dk:crossed}). The first part
of this paper implements these constructions in the framework of $\infty$-categories which provides
the flexibility necessary for our purposes.

A functor $H$ defined on the category of small $k$-linear dg categories with values in a stable
$\infty$-category $\C$ is called 
\begin{enumerate}
	\item {\em localizing} if $H$ inverts Morita equivalences and sends exact sequences of dg
		categories to exact sequences in $\C$.
	\item {\em $\AA^1$-homotopy invariant} if, for every dg category $A$, the functor $H$ maps $A \to A[t]$ to
		an equivalence in $\C$.
\end{enumerate}
Examples of localizing $\AA^1$-homotopy invariants are provided by periodic cyclic homology over
a field of characteristic $0$, homotopy $K$-theory, $K$-theory with finite coefficients, and topological $K$-theory
over ${\mathbb C}$ (cf. \cite{keller:invariance,tabuada-vdbergh,blanc}). 
The main result of this work is the following:

\begin{thm}\label{thm:main_intro} Let $H$ be a localizing $\AA^1$-homotopy invariant with values in a stable
	$\infty$-category $\C$, and let $(S,M)$ be a stable marked surface where $S \setminus M$ is equipped with a framing.
	Define $E = H(k)$ to be the object of $\C$ obtained by applying $H$ to the dg category with
	one object and endomorphism ring $k$. Then there is an equivalence 
	\[
		H(F(S,M;k)) \simeq E(S,M)[-1]
	\]
	where the right-hand side denotes the relative homology of the
	pair $(S,M)$ with coefficients in $E[-1]$.
\end{thm}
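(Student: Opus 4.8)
The plan is to exploit the cosheaf description of $F(S,M;k)$ and the fact that a localizing invariant $H$ turns the cosheaf-theoretic gluing into a homological computation. First I would fix a convenient spanning graph (spine) $\Gamma \subset S \setminus M$, so that $F(S,M;k)$ is computed as the colimit (global sections) of the constructible cosheaf of dg categories associated to $\Gamma$ via the cyclic/paracyclic $2$-Segal machinery. The key input is that $H$, being localizing, sends the relevant exact sequences of dg categories — the ones expressing global sections in terms of the local pieces sitting over vertices and edges of $\Gamma$ — to (co)fiber sequences in $\C$; this is where the ``Thomason–Trobaugh style localization for topological Fukaya categories'' alluded to in the abstract does the work. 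Concretely, I expect a Mayer–Vietoris / descent spectral sequence or an explicit cofiber sequence
\[
	\bigoplus_{e \in \Gamma_1} H(F_e) \longrightarrow \bigoplus_{v \in \Gamma_0} H(F_v) \longrightarrow H(F(S,M;k))
\]
realizing $H(F(S,M;k))$ as the $H$-homology of a chain complex built from the cell structure of $\Gamma$.

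Next I would reduce to the local case. The vertex and edge dg categories $F_v$, $F_e$ of the cosheaf are (after the paracyclic refinement that uses the framing) the topological Fukaya categories of small boundary-marked disks, for which $H$ must be computed by hand: one shows $H(F(\mathrm{disk})) \simeq E^{\oplus r}[-1]$ or the appropriate shift, using $\AA^1$-invariance to kill the ``$[t]$-directions'' and the semiorthogonal/gluing decomposition of the disk category into copies of $k$-mod. This local computation, combined with the fact that $\AA^1$-invariance makes $H$ factor through a suitably stable, additive invariant, identifies each term $H(F_v)$, $H(F_e)$ with copies of $E[-1]$ indexed by the marked boundary arcs.

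Then comes the identification with relative homology. Having written $H(F(S,M;k))$ as the homology of a complex whose terms are $E[-1]$-valued chains on the cells of $\Gamma$, I would compare this complex with the cellular cochain (or chain) complex computing $H_*(S, M; E[-1])$. Since $\Gamma$ is a spine, $S \setminus M$ deformation retracts onto $\Gamma$, and a careful bookkeeping of the marked boundary components — the arcs of $\partial S$ cut out by $M$, which contribute the relative part of the pair $(S,M)$ — shows the two complexes agree, including the global shift by $[-1]$ coming from the local disk computation. Mapping-class-group equivariance and independence of the chosen spine (already established for $F(S,M;k)$ in \cite{dk:triangulated}) guarantee the answer does not depend on $\Gamma$.

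The main obstacle I anticipate is not the formal descent argument but controlling the \emph{functoriality and gluing maps} at the level of $H$-invariants: one must check that the cosheaf restriction maps $F_v \leftarrow F_e$ induce, after applying $H$ and the local identifications, exactly the cellular boundary maps of the pair $(S,M)$ with the correct signs/orientations — in particular that the framing data enters precisely so as to rigidify these identifications into the $\ZZ$-graded (rather than merely $\Zt$-graded) statement. A secondary subtlety is verifying that $H$ commutes with the relevant finite colimits/totalizations used to assemble global sections, which is where localizing-plus-$\AA^1$-invariance (hence the Thomason–Trobaugh localization theorem for these combinatorial models) is genuinely needed rather than just additivity.
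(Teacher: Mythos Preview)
Your outline matches the paper's two-step strategy --- (i) show that $H$ commutes with the state-sum colimit defining $F(S,M)$, and (ii) evaluate the resulting state sum of the coparacyclic object $HF$ --- but your emphasis is inverted and you are missing the device that dissolves what you call the main obstacle. The step you label a ``secondary subtlety'' is in fact the principal theorem (Mayer--Vietoris, Theorem~\ref{thm:motivicMV}): the pushouts in the cosheaf description are \emph{not} exact sequences of dg categories in general, so being localizing alone does not give descent. The paper proves Mayer--Vietoris by induction on the number of loops of $\Gamma$, using that $HF$ is $1$-Segal (Proposition~\ref{prop:exact}, which subsumes your ``local disk computation'') together with a d\'evissage lemma (Lemma~\ref{lem:devissage}) showing that adjoining a square-zero endomorphism is an $H$-equivalence. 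The latter is precisely where $\AA^1$-homotopy invariance enters, and the remark after Theorem~\ref{thm:motivicMV} exhibits Hochschild homology as a localizing invariant for which Mayer--Vietoris genuinely fails.

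Conversely, your ``main obstacle'' --- matching the induced maps on $H$-invariants with the cellular boundary maps of $(S,M)$, signs and framing included --- the paper bypasses entirely. Once Mayer--Vietoris holds, $H(F(S,M))$ is the state sum on $\Gamma$ of a coparacyclic $1$-Segal object $X=HF$ in a stable $\infty$-category with $X^0\simeq 0$. The paper proves an abstract delooping statement (Proposition~\ref{prop:paraloop}): any such $X$ is canonically equivalent to $L_{\Omega X^1}$, the coparacyclic object obtained by applying the spectrum object $\Omega X^1 \simeq H(k)[-1]$ to the universal cocyclic pointed space $L^\bullet$ of \eqref{eq:uniloop}. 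Since the state sum of $L_E$ on $\Gamma$ is tautologically $E(S/M)$ (Proposition~\ref{prop:spectrum}), the identification with relative homology follows with no explicit comparison of differentials; working paracyclically rather than cyclically is exactly what forces this uniqueness. Your direct cellular-complex approach can be made to work, but the delooping theorem is the clean substitute for the bookkeeping you anticipate.
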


As a concrete example, we have:

\begin{cor} Let $k$ be a field of characteristic $0$. We have the formulas
	\begin{align*}
		\HP_0( F(S,M;k)) & \cong  H_1(S,M;k)\\
		\HP_1( F(S,M;k)) & \cong  H_2(S,M;k)
	\end{align*}
	for periodic cyclic homology over $k$.
\end{cor}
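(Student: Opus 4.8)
The plan is to implement the Thomason--Trobaugh-type strategy announced in the abstract: use the constructible-cosheaf presentation of $F(S,M;k)$ to cut the surface into elementary pieces, compute $H$ on those pieces, and reassemble using that $H$ sends exact sequences of dg categories to cofiber sequences in $\C$, checking that the resulting combinatorics reproduces a cellular model for the $E[-1]$-valued relative chains of the pair $(S,M)$. As the base case I would treat a boundary-marked disk $D_n$, a disk with $n$ marked points on its boundary. Its topological Fukaya category $F(D_n;k)$ admits a semiorthogonal decomposition into $n-1$ copies of $\Perf(k)$ --- equivalently it is Morita equivalent to the perfect derived category of the linearly oriented $A_{n-1}$-quiver, with its full exceptional collection of length $n-1$ --- so that, $H$ being localizing and hence additive, $H(F(D_n;k))\simeq E^{\oplus(n-1)}$. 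On the other hand $H_*(D_n,M;\ZZ)$ is concentrated in degree $1$, where it is $\ZZ^{n-1}$, so $E(D_n,M)[-1]\simeq E^{\oplus(n-1)}$ as well; this also fixes the overall shift in the theorem.

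The technical heart is a combinatorial localization theorem. Realizing $F(S,M;k)\simeq\colim_{c}\mathcal F(c)$ as the global sections of the cosheaf $\mathcal F$ over the exit-path poset of a spanning graph $\Gamma\subset S\setminus M$ --- with vertex stalks $F$ of disk-neighbourhoods and edge stalks copies of $\Perf(k)$ --- I would show that a properly embedded arc in $(S,M)$ induces a decomposition of $\Gamma$ (removal of an open star, or contraction of a subgraph) under which the relevant corestriction functors of $\mathcal F$ are fully faithful with the expected Verdier quotient, yielding an exact sequence of dg categories relating $F$ of the cut surface, $F$ of the pieces, and $F(S,M;k)$. Iterating such cuts, any stable marked surface is reduced to a disjoint union of boundary-marked disks; the intermediate cuts through edges of $\Gamma$ contribute ``cylinder'' terms --- an edge category glued to itself, governed by $k[t,t^{-1}]$-type endomorphisms --- and this is precisely the point at which $\AA^1$-homotopy invariance enters: combined with the fundamental theorem it forces $H(k[t,t^{-1}])\simeq E\oplus E[-1]$, killing the $\mathrm{Nil}$-type corrections over a general base ring $k$, so that the cylinder matches $H^*(S^1)\otimes E$.

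Applying $H$ to these localization sequences exhibits $H(F(S,M;k))$ as the totalization of a finite diagram assembled from the disk contributions $E^{\oplus(n_v-1)}$, one per vertex $v$ of $\Gamma$ of valence $n_v$, and from the $\Perf(k)$-labelled edges. A direct bookkeeping --- whose shadow on Euler characteristics is the identity $\sum_v(n_v-1)-(\#\,\text{edges})=(\#\,\text{edges})-(\#\,\text{vertices})=-\chi(S,M)$, matching $\chi(E(S,M)[-1])$ --- identifies this diagram with the one computing the $E[-1]$-valued relative homology of $(S,M)$; promoting the numerical match to an actual equivalence uses the long exact sequence of the pair $(S,M)$ together with the base case. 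This yields $H(F(S,M;k))\simeq E(S,M)[-1]$, and specializing $H=\HP$ recovers the corollary.

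I expect the main obstacle to be the localization step itself: verifying that cutting the surface genuinely produces \emph{exact} (Verdier/Karoubi) sequences of dg categories at the level of the cosheaf --- identifying the corestriction functors as fully faithful, pinning down their quotients, and organizing the colimit over the exit-path poset as an iterated sequence of such localizations --- since a localizing invariant does \emph{not} preserve arbitrary colimits of dg categories, and so everything must be routed through semiorthogonal/recollement data rather than through the colimit directly. In the framed, $\ZZ$-graded setting there is the further bookkeeping of the paracyclic twists, which must be tracked carefully so that the cylinder computation lands in the correct homological degree.
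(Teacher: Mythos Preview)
Your proposal is really a sketch of the main theorem (Theorem~\ref{thm:main_intro}); the corollary itself has no separate proof in the paper --- it is obtained by taking $H=\HP$, which over a field of characteristic $0$ is a localizing $\AA^1$-homotopy invariant, and reading off $\HP_i(F(S,M;k))\cong H_{i+1}(S,M;k)$ from $H(F(S,M))\simeq E(S,M)[-1]$.

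At the level of the main theorem, your strategy and the paper's share the same skeleton --- localization sequences for the cosheaf plus $\AA^1$-homotopy invariance to handle loop/cylinder pieces --- but the execution diverges at the step you yourself flag as the obstacle. The paper does \emph{not} attempt the ``direct bookkeeping'' you describe. Instead it proves a Mayer--Vietoris statement (Theorem~\ref{thm:motivicMV}) saying that $H$ commutes with the state-sum colimit, so that $H(F(\Gamma))\simeq (HF)(\Gamma)$; then it observes that $HF$ is a coparacyclic $1$-Segal object in $\C$ (Proposition~\ref{prop:exact}), and proves a general \emph{delooping theorem} (Proposition~\ref{prop:paraloop}): any coparacyclic $1$-Segal object $X$ with $X^0\simeq 0$ in a stable $\infty$-category is equivalent to $L_{\Omega(X^1)}$. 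Since the universal $L_E$ evaluates on $\Gamma$ to $E(S/M)$ (Proposition~\ref{prop:spectrum}), the identification with relative homology is automatic --- no Euler-characteristic matching or inductive reassembly is needed. The $\AA^1$-invariance is also used differently: not via $k[t,t^{-1}]$ and the fundamental theorem, but through Lemma~\ref{lem:devissage}, which shows that adjoining a square-zero endomorphism $\delta$ is an $H$-equivalence; this is what controls the loop pieces in the Mayer--Vietoris induction.

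Your approach is not wrong, but the step ``promoting the numerical match to an actual equivalence'' is doing a lot of unexamined work. Carrying it out would require organizing the iterated cuts into a genuine identification of two diagrams in $\C$, not just of their Euler characteristics; the paper's delooping theorem is precisely the abstract mechanism that accomplishes this in one stroke, and is the main idea you are missing.
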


Our proof strategy is as follows:
We prove a Mayer-Vietoris type statement for localizing $\AA^1$-homotopy invariants of
topological Fukaya categories. Roughly speaking, using the local nature of the topological
Fukaya category, the result allows us to reduce the calculation of such an invariant
to the case when the surface is a boundary-marked disk. The arguments we use are inspired by methods
in Thomason-Trobaugh's algebraic $K$-theory of schemes and may be of independent interest: We
establish localization sequences for topological Fukaya categories which are analogous to the ones
for derived categories of schemes appearing in \cite{thomason-trobaugh}. Similar localization
techniques for Fukaya categories play a role in \cite{haiden:stability}, where the Grothendieck
group is computed, and the forthcoming work \cite{sibilla}.
The Mayer-Vietoris theorem expresses $H(F(S,M))$ as a state sum of a coparacyclic $1$-Segal object
in a stable $\infty$-category. We conclude by proving a delooping statement for such objects which
allows us to explicitly compute the state sum. Finally, I would like to point out that Jacob Lurie
has communicated to me a version of Theorem \ref{thm:main_intro} which does not assume $\AA^1$-homotopy
invariance but assumes that the surface has no internal marked points.

We will use the language of $\infty$-categories and refer to \cite{lurie:htt} as a general
reference.\\

\noindent
{\bf Acknowledgements.} I would like to thank Chris Brav, Mikhail Kapranov, Jacob Lurie, Pranav Pandit, Paul
Seidel, and Nicolo Sibilla for inspiring conversations on the subject of this paper.

\section{Cyclic and paracyclic $2$-Segal objects}

We begin with a translation of some constructions in \cite{dk:triangulated} into the context of
$\infty$-categories. 

\subsection{The Segal conditions}
\label{sec:segal}

We start by formulating the Segal conditions (cf. \cite{segal.categories, rezk, dk:highersegal}). 

\begin{rem}\label{rem:bDelta} Let $\bDelta$ denote the category of finite nonempty linearly ordered
	sets. This category contains the simplex category $\Delta$ as the full subcategory spanned
	by the collection of standard ordinals $\{ [n], n \ge 0\}$. For every object of $\bDelta$, there exists a {\em
	unique} isomorphism with an object of $\Delta$ so that we can identify any diagram in
	$\bDelta$ with a unique diagram in $\Delta$. With this in mind, we may use arbitrary
	finite nonempty linearly ordered sets to describe diagrams in $\Delta$ without ambiguity.  
\end{rem}

\begin{defi}
	Let $\C$ be an $\infty$-category, and let $X^{\bullet}: \N(\Delta) \to \C$ be a cosimplicial object in $\C$.
\begin{enumerate} 
	\item The cosimplicial object $X^{\bullet}$ is called {\em $1$-Segal} if, for every $0 < k < n$, the resulting diagram
		\[
			\begin{tikzcd}
				X^{\{k\}} \arrow{r}\arrow{d} & X^{\{k,k+1,\dots,n\}} \arrow{d}\\
				X^{\{0,1,\dots,k\}} \arrow{r} & X^{\{0,1,\dots,n\}}
			\end{tikzcd}
		\]
		in $\C$ is a pushout diagram.

	\item  Let $P$ be a planar convex polygon with vertices labelled cyclically by the set $\{0,1,\dots,n\}$, $n \ge 3$. Consider
		a diagonal of $P$ with vertices labelled by $i < j$ so that we obtain a subdivision of $P$ into two
		subpolygons with vertex sets $\{0,1,\dots,i,j,\dots,n\}$ and $\{i,i+1,\dots,j\}$,
		respectively. The resulting triple of numbers $0 \le i < j \le n$ is called a
		{\em polygonal subdivision}.
		\begin{enumerate}[label=(\roman *)]
			\item \label{en:2co} The cosimplicial object $X^{\bullet}$ is called {\em $2$-Segal} if, for 
				every polygonal subdivision $0 \le i < j \le n$, the resulting diagram
					\begin{equation}\label{eq:2segal}
						\begin{tikzcd}
							X^{\{i,j\}} \arrow{r}\arrow{d} & X^{\{i,i+1,\dots,j\}} \arrow{d}\\
							X^{\{0,1,\dots,i,j,\dots,n\}} \arrow{r} & X^{\{0,1,\dots,n\}}
						\end{tikzcd}
					\end{equation}
				in $\C$ is a pushout diagram. 
			\item We say $X^{\bullet}$ is a {\em unital} $2$-Segal object if, in addition to
				\ref{en:2co}, for every $0 \le k < n$, the diagram 
				\begin{equation}\label{eq:unital}
						\begin{tikzcd}
							X^{\{k,k+1\}} \arrow{r}\arrow{d} & X^{\{0,1,\dots,n\}} \arrow{d}\\
							X^{\{k\}} \arrow{r} & X^{\{0,1,\dots,k,k+2,\dots,n\}}
						\end{tikzcd}
					\end{equation}
				in $\C$ is a pushout diagram.
		\end{enumerate}
\end{enumerate}
\end{defi}

\begin{prop}\label{prop:12segal} Every $1$-Segal cosimplicial object $X^{\bullet}: \N(\Delta) \to \C$ is unital
	$2$-Segal.
\end{prop}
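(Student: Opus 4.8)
The plan is to deduce both parts of the unital $2$-Segal condition from the $1$-Segal condition. Two preliminary observations do the work. First, a \emph{membrane form} of the $1$-Segal condition: if $X^{\bullet}$ is $1$-Segal and $[n]=I_1\cup\dots\cup I_m$ is a decomposition into a chain of sub-intervals with consecutive intervals meeting in a single point, then $X^{[n]}$ is the colimit of the zigzag $X^{I_1}\leftarrow X^{I_1\cap I_2}\to X^{I_2}\leftarrow\dots\to X^{I_m}$. This follows by induction on $m$ from the pasting law for pushout squares, the cases $m\le 2$ being trivial or being exactly the squares defining the $1$-Segal condition. Second, a \emph{cobase-change principle}: given two zigzag diagrams that agree except in their middle term, say with middle terms related by a map $N\to N'$, the square formed by $N$, $N'$, and the two colimits (with the colimit coprojections as vertical maps) is a pushout; this too is a formal consequence of the pasting law, since the one colimit is obtained from the other by replacing $N$ with $N'$ in one leg of the iterated pushout that computes it.

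For the $2$-Segal clause, fix a polygonal subdivision $0\le i<j\le n$. When $j=i+1$ or $(i,j)=(0,n)$ the square \eqref{eq:2segal} has a row or column of identity maps, hence is trivially a pushout. Otherwise, the membrane form presents $X^{\{0,\dots,n\}}$ as the colimit of the zigzag for the chain $\{0,\dots,i\},\{i,\dots,j\},\{j,\dots,n\}$ and $X^{\{0,\dots,i,j,\dots,n\}}$ as the colimit of the zigzag for the chain $\{0,\dots,i\},\{i,j\},\{j,\dots,n\}$ (omitting the first or last interval when $i=0$ or $j=n$). These two zigzags agree except in their middle term, where they are related by the face map $X^{\{i,j\}}\to X^{\{i,\dots,j\}}$, so the cobase-change principle identifies \eqref{eq:2segal} with a pushout square.

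For the unitality clause, fix $0\le k<n$; I treat the generic case $0<k$ and $k+1<n$, the case $n\le 1$ being trivial and the boundary cases handled analogously. The membrane form presents $X^{\{0,\dots,n\}}$ and $X^{\{0,\dots,n\}\setminus\{k+1\}}$ as the colimits of the zigzags for the chains $\{0,\dots,k\},\{k,k+1,k+2\},\{k+2,\dots,n\}$ and $\{0,\dots,k\},\{k,k+2\},\{k+2,\dots,n\}$, which agree except in their middle term, where they are related by the codegeneracy $X^{\{k,k+1,k+2\}}\to X^{\{k,k+2\}}$ collapsing $k$ and $k+1$ — this being precisely the restriction of the map $X(\sigma^k)$ occurring in \eqref{eq:unital}. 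Hence the cobase-change principle shows that
\[
	\begin{tikzcd}
		X^{\{k,k+1,k+2\}} \arrow{r}\arrow{d} & X^{\{0,\dots,n\}} \arrow{d}\\
		X^{\{k,k+2\}} \arrow{r} & X^{\{0,\dots,n\}\setminus\{k+1\}}
	\end{tikzcd}
\]
is a pushout, and by the pasting law \eqref{eq:unital} is a pushout as soon as the local square
\[
	\begin{tikzcd}
		X^{\{k,k+1\}} \arrow{r}\arrow{d} & X^{\{k,k+1,k+2\}} \arrow{d}\\
		X^{\{k\}} \arrow{r} & X^{\{k,k+2\}}
	\end{tikzcd}
\]
is (all four maps being the evident faces and codegeneracies). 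For the local square, the $1$-Segal condition gives $X^{\{k,k+1,k+2\}}\simeq X^{\{k,k+1\}}\amalg_{X^{\{k+1\}}}X^{\{k+1,k+2\}}$; forming the pushout of this along $X^{\{k,k+1\}}\to X^{\{k\}}$ and composing legs, the iterated pushout simplifies (the $X^{\{k,k+1\}}$-terms cancelling) to a single pushout along the map $X^{\{k+1\}}\to X^{\{k\}}$, which is the unique isomorphism of Remark \ref{rem:bDelta}, so the pushout is $X^{\{k+1,k+2\}}\xrightarrow{\simeq}X^{\{k,k+2\}}$ and the local square is a pushout.

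The hard part is the last step. The reductions to the two displayed squares are structurally clean, but verifying that the horizontal composite of the local square with the large one is literally \eqref{eq:unital}, and that the pushout defining the local square really is $X^{\{k,k+2\}}$ with the asserted structure maps, demands careful bookkeeping of the cofaces and codegeneracies involved; this bookkeeping relies throughout on identifying various structure maps between one- and two-element ordered sets with the canonical isomorphisms supplied by Remark \ref{rem:bDelta}. The boundary cases $k\in\{0,n-1\}$ for the unitality clause, and $i=0$ or $j=n$ for the $2$-Segal clause, are treated by the same argument with the correspondingly truncated chains.
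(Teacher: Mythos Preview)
Your argument is correct and rests on the same core tool as the paper --- the pasting law for pushouts combined with $1$-Segal decompositions --- but the two proofs organize the computation differently. The paper handles each clause with a single augmented rectangle: for \eqref{eq:2segal} it attaches on the left a square with corners $X^{\{i\}}\amalg X^{\{j\}}$ and $X^{\{0,\dots,i\}}\amalg X^{\{j,\dots,n\}}$, and for \eqref{eq:unital} it attaches on top a square with corners $X^{\{k\}}\amalg X^{\{k+1\}}$ and $X^{\{0,\dots,k\}}\amalg X^{\{k+1,\dots,n\}}$; in each case both the attached square and the outer rectangle are $1$-Segal pushouts, so the desired square is a pushout by pasting. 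Your route instead develops a ``membrane'' picture (iterated $1$-Segal zigzags) and, for unitality, factors through an explicit reduction to the local case $n=2$. The paper's argument is shorter and avoids the bookkeeping you flag at the end; your version is more modular, makes the chain-decomposition viewpoint explicit, and the cobase-change principle you isolate is a reusable lemma. Both are valid proofs of the proposition.
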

\begin{proof} Given a polygonal subdivision $0 \le i \le j \le n$, we augment the corresponding
	square \eqref{eq:2segal} to the diagram 
	\[
						\begin{tikzcd}
							X^{\{i\}} \amalg X^{\{j\}} \arrow{r}\arrow{d} & X^{\{i,j\}} \arrow{r}\arrow{d} & X^{\{i,i+1,\dots,j\}} \arrow{d}\\
							X^{\{0,1,\dots,i\}} \amalg
							X^{\{j,j+1,\dots,n\}} \arrow{r} &
							X^{\{0,1,\dots,i,j,\dots,n\}} \arrow{r} &
							X^{\{0,1,\dots,n\}}.
						\end{tikzcd}
	\]
	The $1$-Segal condition on $X^{\bullet}$ implies that the left-hand square and outer
	square are pushouts. Hence, by \cite[4.4.2.1]{lurie:htt}, the right-hand square is a
	pushout. Similarly, to obtain unitality, we augment the square \eqref{eq:unital} to the
	diagram
	\[
						\begin{tikzcd}
							X^{\{k\}} \amalg X^{\{k+1\}} \arrow{d}
							\arrow{r} & X^{\{0,1,\dots,k\}} \amalg
							X^{\{k+1,k+2,\dots,n\}} \arrow{d}\\
							X^{\{k,k+1\}} \arrow{r}\arrow{d} & X^{\{0,1,\dots,n\}} \arrow{d}\\
							X^{\{k\}} \arrow{r} &
							X^{\{0,1,\dots,k,k+2,\dots,n\}}.
						\end{tikzcd}
	\]
	The $1$-Segal condition on $X^{\bullet}$ implies that the top and outer squares are pushouts
	so that by loc. cit. the bottom square is a pushout.
\end{proof}

%

\subsection{Cyclically ordered sets and ribbon graphs}

Let $S$ be a compact oriented surface, possibly with boundary $\partial S$, together with a chosen finite 
subset $M \subset S$ of marked points. We call $(S,M)$ stable if 
\begin{enumerate}
	\item every connected component of $S$ has at least one marked point,
	\item every connected component of $\partial S$ has at least one marked point,
	\item every connected component of $S$ which is homeomorphic to the $2$-sphere has at least two marked
		points.
\end{enumerate}
In this section, we develop a categorified state sum formalism based on a version of the well-known
combinatorial description of stable oriented marked surfaces $(S,M)$ in terms of Ribbon graphs.

\subsubsection{Cyclically ordered sets}
\label{sec:cyclically}

In Section \ref{sec:segal}, we enlarged the simplex category $\Delta$ to the equivalent
category $\bDelta$ of finite nonempty linearly ordered sets. In analogy, we introduce the category of nonempty finite
cyclically ordered sets $\bLambda$ following \cite{dk:crossed} which plays the same role for the
cyclic category $\Lambda$. 

Let $J$ be a finite nonempty set. We define a {\em cyclic order} on $J$ to be a transitive action
of the group $\ZZ$. Note that any such an action induces a simply transitive action of the
group $\ZZ/N\ZZ$ on $J$ where $N$ denotes the cardinality of the set $J$.

\begin{ex} Let $I$ be a finite nonempty linearly ordered set. We obtain a cyclic order on $I$ as
	follows: Let $i_0 < i_1 < \dots < i_n$ denote the elements of $I$. We set, for $0 \le k <n$, $i_k + 1 =
	i_{k+1}$, and $i_n +1 = i_0$. We call the resulting cyclic order on $I$ the
	{\em cyclic closure} of the given linear order. We denote the cyclic closure of the standard ordinal
	$[n]$ by $\cn$.
\end{ex}

\begin{ex}\label{exa:lex} 
	More generally, let $f: J \to J'$ be a map of finite nonempty sets. Assume that $J'$
	carries a cyclic order and that every fiber of $f$ is equipped with a linear order. We
	define the {\em lexicographic cyclic order} on $J$ as follows: Let $j \in J$. If $j$ is not
	maximal in its fiber then we define $j+1$ to be the successor to $j$ in its fiber. If $j$ is
	maximal in its fiber, then we define $j+1$ to be the minimal element of the successor fiber
	(The cyclic order on $J'$ induces a cyclic order on the fibers of $f$ where we simply skip
	empty fibers). 
\end{ex}

A {\em morphism} $J \to J'$ of cyclically ordered sets consists of
\begin{enumerate}
	\item a map $f: J \to J'$ of underlying sets,
	\item the choice of a linear order on every fiber of $J'$,
\end{enumerate}
such that the cyclic order on $J$ is the lexicographic order from Example \ref{exa:lex}.
We denote the resulting category of cyclically ordered sets by $\bLambda$.
Given a cyclically ordered set $J$, we define the {\em set of interstices} 
\[
	J^{\vee} = \Hom_{\bLambda}(J,\langle 0 \rangle)
\]
which, by definition, is the set of linear
orders on $J$ whose cyclic closure agrees with the given cyclic order on $J$. If the set $J$ has cardinality
$n+1$, then we may identify $J^{\vee}$ with the set of isomorphisms $J \to \cn$ in $\bLambda$. The
$\ZZ$-action on $\cn$ induces a $\ZZ$-action on $J^{\vee}$ which defines a cyclic order. 

\begin{prop} The association $J \mapsto J^{\vee}$ extends to an equivalence of categories
	$\bLambda^{\op} \to \bLambda$. 
\end{prop}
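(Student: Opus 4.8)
The plan is to construct the functor $(-)^{\vee}$ on morphisms, produce a natural isomorphism $J \xrightarrow{\sim} (J^{\vee})^{\vee}$, and check the two functors are mutually inverse. First I would make the object-level bijection explicit: if $J$ has cardinality $n+1$, then $J^{\vee} = \Hom_{\bLambda}(J, \cn)$ is the torsor of linear orders refining the cyclic order on $J$, and the $\ZZ$-action transported from $\cn$ makes it a cyclically ordered set of the same cardinality. The key geometric picture is that the elements of $J$ and the elements of $J^{\vee}$ interleave on an oriented circle — a linear order on $J$ is a "gap" between two cyclically consecutive points of $J$ — so that $(J^{\vee})^{\vee}$ is canonically identified with $J$ (a gap between gaps is a point). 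I would phrase this interleaving precisely: there is a canonical cyclic order on the disjoint union $J \sqcup J^{\vee}$ in which the two subsets strictly alternate, characterized by the requirement that for $\ell \in J^{\vee}$, viewed as a linear order $j_0 < \dots < j_n$ on $J$, the element $\ell$ sits cyclically between $j_n$ and $j_0$.

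Next I would define $(-)^{\vee}$ on a morphism $\phi\colon J \to J'$, which consists of a set map $f\colon J\to J'$ together with a linear order on each fiber of $f$. I want to produce a morphism $J'^{\vee} \to J^{\vee}$, i.e. a set map $g\colon J'^{\vee}\to J^{\vee}$ together with a linear order on each fiber of $g$, compatible with the lexicographic cyclic orders. The recipe: given a linear order $\ell'$ on $J'$ refining its cyclic order, combine it with the chosen fiberwise linear orders on $J$ to produce, via the lexicographic construction of Example \ref{exa:lex}, a linear order $g(\ell')$ on $J$ refining the cyclic order on $J$ — this is exactly the definition that makes $\phi$ a morphism in $\bLambda$. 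For the fiberwise data of $g$, I would observe that the fiber of $g$ over a fixed $\ell \in J^{\vee}$ consists of those $\ell'$ inducing $\ell$, and this set is in natural bijection with an iterated product of the sets of linear orders refining the cyclic order on each fiber of $f$ — more simply, using the interleaving picture, a point of $J'^{\vee}$ in this fiber is a "gap in $J'$" lying inside the block of $J'$-points that collapse to a given gap-position determined by $\ell$; these gaps are linearly ordered by the ambient cyclic order once we cut at $\ell$. I would take that linear order as the required fiber datum, and then verify that the lexicographic cyclic order it produces on $J'^{\vee}$ agrees with the one transported from $\cn$. Functoriality ($\on{id}^{\vee} = \on{id}$, $(\psi\phi)^{\vee} = \phi^{\vee}\psi^{\vee}$) then follows from the associativity of the lexicographic construction, which I would state as a lemma and check by unwinding Example \ref{exa:lex}.

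Finally, I would exhibit the natural isomorphism $\eta_J\colon J \xrightarrow{\sim} (J^{\vee})^{\vee}$ coming from the symmetry of the alternating cyclic order on $J \sqcup J^{\vee}$, i.e. $j \in J$ is sent to the linear order on $J^{\vee}$ obtained by cutting the circle immediately after $j$. Checking that $\eta$ is a morphism in $\bLambda$ amounts to matching up lexicographic orders once more, and naturality in $J$ is a diagram chase using the recipe for $(-)^{\vee}$ on morphisms. Since $\eta$ is an isomorphism for every $J$ and the construction is manifestly symmetric under applying $(-)^{\vee}$ again, this shows $(-)^{\vee}\colon \bLambda^{\op}\to\bLambda$ and its opposite are quasi-inverse, giving the claimed equivalence.

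The main obstacle I expect is purely bookkeeping: making the "interleaving on a circle" picture into airtight definitions of the fiberwise linear orders attached to $g = f^{\vee}$, and then verifying that the resulting lexicographic cyclic order coincides with the one induced by the $\ZZ$-action on interstices — the combinatorics of which gap lies in which block under a composite of morphisms is exactly where sign-like errors creep in. Everything else (object bijection, the duality $\eta$, formal functoriality) is forced once the lexicographic construction of Example \ref{exa:lex} is shown to be associative.
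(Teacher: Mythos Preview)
Your proposal is correct and follows essentially the same approach as the paper: define $f^{\vee}$ on underlying sets via the lexicographic construction, equip its fibers with the linear order inherited from the ambient cyclic order (the paper phrases this algebraically as $h \le h'$ iff $h(j) \le h'(j)$ for any fixed $j$, which is your ``cut at $\ell$'' recipe), and exhibit the double-dual isomorphism by sending $j \in J$ to the linear order on $J^{\vee}$ it determines. Your interleaving-on-a-circle picture is a pleasant geometric repackaging of the same construction, and your explicit attention to functoriality via associativity of the lexicographic order is a detail the paper leaves implicit.
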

\begin{proof}
	Given a morphism $f: J \to J'$ in $\bLambda$, we have to define a dual $f^{\vee}:
	(J')^{\vee} \to J^{\vee}$. The datum of $f$ includes a choice of linear order on each fiber
	of $f$. Given a linear order on $J'$, we can form the lexicographic linear order on $J$ by
	a linear analog of the construction in Example \ref{exa:lex}. This defines the map
	$f^{\vee}$ on underlying sets. We further have to define a linear order on the fibers of
	$f^{\vee}$. Given linear orders $h: J' \cong [n]$ and $h':J' \cong [n]$ such that
	$f^{\vee}(h) = f^{\vee}(h')$, we fix any $j \in J$ and declare $h \le h'$ if $h(j) \le
	h'(j)$. This defines a linear order on each fiber of $f^{\vee}$ which does in fact not
	depend on the chosen element $j \in J$. 
	To verify that $J \mapsto J^{\vee}$ is an equivalence, we observe that the double dual is
	naturally equivalent to the identity functor: An element $j \in J$ determines a linear order
	on $J^{\vee}$ by declaring, for $h: J \cong [n]$ and $h': J \cong [n]$, $h \le h'$ if $h(j)
	\le h'(j)$. We leave to the reader the verification that this association defines an
	isomorphism
	\[
		J \to (J^{\vee})^{\vee}
	\]
	in $\bLambda$ which extends to a natural isomorphism between the identity functor and the double dual.
\end{proof}

We refer to the equivalence $\bLambda^{\op} \to \bLambda$ as {\em interstice duality}. The following
Lemma will be important for the interplay between cyclic $2$-Segal objects and ribbon
graphs.

\begin{lem} \label{lem:pullback} Let $I$,$J$ be finite sets with elements $i \in I$, $j \in J$, and
	consider the pullback square
		\begin{equation}\label{eq:cyclicpullback}
			\begin{tikzcd}
				K \arrow{r} \arrow{d} & J \arrow{d}{q} \\
				I \arrow{r}{p} & \{i,j\} 
			\end{tikzcd}
		\end{equation}
	where the maps $p$ and $q$ are determined by $p^{-1}(i) = \{i\}$ and $q^{-1}(j) = \{j\}$.
	Assume that $K$ is nonempty and that the sets $I$ and $J$ are equipped with cyclic orders. Then the following hold:
	\begin{enumerate}
		\item The above diagram lifts uniquely to a diagram in $\bLambda$ such that the
			induced cyclic orders in $I$ and $J$ are the given ones.
		\item The resulting square in $\bLambda$ is a pullback square.	
	\end{enumerate}
\end{lem}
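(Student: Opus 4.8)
The proof is a direct verification resting on the explicit shape of the pullback. On underlying sets $p$ is forced by $p^{-1}(i)=\{i\}$: it sends $i$ to $i$ and all of $I\setminus\{i\}$ to $j$; symmetrically $q$ sends $j$ to $j$ and all of $J\setminus\{j\}$ to $i$. Hence $K\cong(\{i\}\times(J\setminus\{j\}))\sqcup((I\setminus\{i\})\times\{j\})$, and the projection $\pi_I\colon K\to I$ has singleton fibers except over $i$, where the fiber is $\{i\}\times(J\setminus\{j\})$; symmetrically for $\pi_J\colon K\to J$. The two-element set $\{i,j\}$ carries a unique cyclic order, so lifting $p$ to $\bLambda$ amounts to choosing linear orders on its fibers with the cyclic order of $I$ the lexicographic one of Example~\ref{exa:lex}; since the fiber over $i$ is a point, the only datum is a linear order on $p^{-1}(j)=I\setminus\{i\}$, and lexicographicity forces it to be the \emph{cut} of the cyclic order of $I$ at $i$ (list the successor of $i$, then its successor, and so on). So the lift of $p$ exists and is unique, and likewise that of $q$ is the cut of $J$ at $j$.

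Next, equip $K$ with a cyclic order by declaring $\pi_I$ to be the $\bLambda$-morphism whose fiber over $i$ carries the $j$-cut of $J$ transported along $\{i\}\times(J\setminus\{j\})\cong J\setminus\{j\}$, all other fibers being points, and taking the induced lexicographic cyclic order on $K$. Unwinding Example~\ref{exa:lex} on the circle, $\pi_J$ is then also a $\bLambda$-morphism, with fiber over $j$ the $i$-cut of $I$; the objects $I$, $J$ carry their given cyclic orders; and $p\pi_I=q\pi_J$ in $\bLambda$, the common composite $K\to\{i,j\}$ having fiber over $i$ the $j$-cut of $J$ and fiber over $j$ the $i$-cut of $I$. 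This gives the lift. For uniqueness, the cyclic order on $\{i,j\}$ and the morphisms $p$, $q$ are forced by the previous paragraph; and in any valid lift $\pi_I$ is a $\bLambda$-morphism with point fibers away from $i$, so the cyclic order of $K$ is the lexicographic order determined by the cyclic order of $I$ and the lone linear order on $\pi_I^{-1}(i)$, which commutativity of the square forces — read through $K\to\{i,j\}$, whose fiber over $i$ must be the $j$-cut of $J$ because $q$ is forced and $\pi_J$ has point fibers over $J\setminus\{j\}$. This proves part (1).

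For part (2), let $L\in\bLambda$ carry $\bLambda$-morphisms $a\colon L\to I$, $b\colon L\to J$ with $pa=qb$. As $K$ is the set-theoretic pullback, there is a unique underlying map $g\colon L\to K$ over $I$ and $J$, and its fiber orders are forced: a point of $K$ outside $\pi_I^{-1}(i)$ is alone in its $\pi_I$-fiber, so its $g$-fiber is identified, orders and all, with a fiber of $a$; the remaining points of $K$ are alone in their $\pi_J$-fibers and inherit orders from $b$. That $g$ with these orders is a $\bLambda$-morphism, and that $\pi_I g=a$, $\pi_J g=b$ in $\bLambda$, is then the assertion that the lexicographic cyclic order of a double composite of $\bLambda$-morphisms may be computed one factor at a time, applied to $L\to K\to I$ and $L\to K\to J$ and using the compatibility over $\{i,j\}$ provided by $pa=qb$.

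The main obstacle is exactly that last point: formulating a clean \emph{associativity of the lexicographic construction} for composable $\bLambda$-morphisms, and checking that the linear order placed on the exceptional fiber $\pi_I^{-1}(i)$ and the one on $\pi_J^{-1}(j)$ are mutually compatible — they must induce the same fiber orders on the composite $K\to\{i,j\}$ — so that the two constructions of the cyclic order on $K$ (via $\pi_I$ and via $\pi_J$) coincide. Everything else is bookkeeping with Example~\ref{exa:lex}. Replacing $I$, $J$ by the standard cyclic sets $\cm$, $\cn$ via the canonical isomorphisms of $\bLambda$ makes the cuts concrete and streamlines the bookkeeping without changing the structure of the argument.
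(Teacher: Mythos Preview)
Your proof is correct and is precisely the ``direct inspection'' that the paper invokes without elaboration; the paper's proof consists of the single sentence ``Follows by direct inspection.'' Your explicit description of $K$, the forced cuts on $I\setminus\{i\}$ and $J\setminus\{j\}$, and the fiberwise construction of the unique factorization $g$ are exactly the checks one has to make. The ``associativity of the lexicographic construction'' you flag at the end is nothing more than the well-definedness of composition in $\bLambda$, and your verification that $\pi_I g$ and $\pi_J g$ recover $a$ and $b$ (via the equality $pa=qb$ of fiber orders over $\{i,j\}$) already contains the needed compatibility, so there is no genuine gap.
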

\begin{proof} Follows by direct inspection.
\end{proof}

\begin{ex}\label{ex:cyclic-pushout} Consider the diagram of linearly ordered sets
\begin{equation}\label{eq:pushout}
	\begin{tikzcd}
		\{i,j\} \arrow{r}\arrow{d}& \arrow{d}\{i,i+1,\dots,j\} \\
		\{0,1,\dots,i,j,\dots,n\}  \arrow{r} & \{0,1,\dots,n\}
	\end{tikzcd}
\end{equation}
corresponding to a polygonal subdivision $0 \le i < j \le n$ of a planar convex polygon as in
Section 
\ref{sec:segal}. Passing to cyclic closures we obtain a diagram in $\bLambda$. By applying
interstice duality we obtain a diagram in $\bLambda$ of the form \eqref{eq:cyclicpullback} which is hence a
pullback diagram. We deduce that the original diagram \eqref{eq:pushout} is a pushout diagram in $\bLambda$. 
Further, the same argument implies that, for every $0 \le k < n$, the diagram
\begin{equation}\label{eq:pushout2}
	\begin{tikzcd}
		\{k,k+1\} \arrow{r}\arrow{d}& \arrow{d}\{0,1,\dots,n\} \\
		\{k\}  \arrow{r} & \{0,1,\dots,k,k+2,\dots,n\}
	\end{tikzcd}
\end{equation}
is a pushout diagram in $\bLambda$.
\end{ex}

\begin{defi} Let $\C$ be an $\infty$-category. A cocyclic object $X: \N(\bLambda) \to \C$ is called
	(unital) $2$-Segal (resp. $1$-Segal) if the underlying cosimplicial object is (unital)
	$2$-Segal (resp. $1$-Segal).
\end{defi}

\begin{rem} \label{rem:2segalpush}
	Note, that the diagram 
	\begin{equation}\label{eq:1segal}
			\begin{tikzcd}
				\{k\} \arrow{r}\arrow{d} & \{k,k+1,\dots,n\} \arrow{d}\\
				\{0,1,\dots,k\} \arrow{r} & \{0,1,\dots,n\}
			\end{tikzcd}
	\end{equation}
	is a pushout diagram in $\bDelta$ and the $1$-Segal condition requires a cosimplicial object
	$X: \Delta \to \C$ to preserve this pushout. In light of this observation, the $2$-Segal condition
	becomes very natural for cocyclic objects: while the cyclic closure of \eqref{eq:1segal} is
	{\em not} a pushout square in $\bLambda$, the squares \eqref{eq:pushout} and
	\eqref{eq:pushout2} are pushout squares. The $2$-Segal condition requires that these pushouts are
	preserved. 
\end{rem}

\subsubsection{Ribbon graphs}

A {\em graph} $\Gamma$ is a pair of finite sets $(H,V)$ equipped with an involution $\tau: H \to H$
and a map $s: H \to V$. The elements of the set $H$ are called {\em halfedges}. We call a halfedge
{\em external} if it is fixed by $\tau$, and {\em internal} otherwise. A pair of $\tau$-conjugate
internal halfedges is called an {\em edge} and we denote the set of edges by $E$. The elements of
$V$ are called {\em vertices}. Given a vertex $v$, the halfedges in the set $H(v) = s^{-1}(v)$ are
said to be {\em incident to $v$}. 

A {\em ribbon graph} is a graph $\Gamma$ where, for every vertex $v$ of $\Gamma$, the set $H(v)$ of
halfedges incident to $v$ is equipped with a cyclic order. We give an interpretation of this datum
in terms of the category of cyclically ordered sets:
Let $\Gamma$ be a graph. We define the incidence category $I(\Gamma)$ to have set of objects given
by $V \cup E$ and, for every internal halfedge $h$, a unique morphism from the vertex $s(h)$ to the
edge $\{h, \tau(h)\}$. We define a functor
\[
	\gamma: I(\Gamma) \lra \Set
\] 
which, on objects, associates to a vertex $v$ the set $H(v)$ and to an edge $e$ the set of halfedges underlying
$e$. To a morphism $v \to e$, given by a halfedge $h \in H(v)$ with $h \in e$, we associate the map
$\pi: H(v) \to e$ which is determined by $\pi^{-1}(h) = \{h\}$ so that $\pi$ maps $H(v) \setminus h$ to $\tau(h)$. 
We call the functor $\gamma$ the {\em incidence diagram} of the graph $\Gamma$.

\begin{prop}\label{prop:ribbon} Let $\Gamma$ be a graph. A ribbon structure on $\Gamma$ is equivalent to a lift
	\[
	\begin{tikzcd}
			 & \bLambda \arrow{d}\\
			I(\Gamma) \arrow{r} \arrow[dashed]{ur} & \Set
	\end{tikzcd}
	\]
	of the incidence diagram of $\Gamma$ where $\bLambda$ denotes the category of cyclically
	ordered sets.
\end{prop}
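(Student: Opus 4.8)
The plan is to unwind both sides of the claimed equivalence into concrete combinatorial data and exhibit a mutually inverse pair of assignments. First I would analyze the target: a lift of the incidence diagram $\gamma\colon I(\Gamma)\to\Set$ along the forgetful functor $\bLambda\to\Set$ consists of the following. For each object of $I(\Gamma)$ — that is, each vertex $v$ and each edge $e$ — we must choose a cyclic order on the set $\gamma(v)=H(v)$, respectively on the two-element set $\gamma(e)$; and for each morphism $v\to e$ (given by a halfedge $h\in H(v)\cap e$) the map $\pi\colon H(v)\to e$ must be promoted to a morphism in $\bLambda$, which by the definition in Section~\ref{sec:cyclically} means equipping each fiber of $\pi$ with a linear order in such a way that the cyclic order on $H(v)$ is the induced lexicographic order. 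Since $\pi$ has two fibers, $\{h\}$ and $H(v)\setminus h = \pi^{-1}(\tau(h))$, the first fiber carries no data, and the content is a linear order on $H(v)\setminus h$. The compatibility condition is then that the lexicographic cyclic order built from the cyclic order on $e=\{h,\tau(h)\}$ (which is the unique cyclic order on a two-element set) together with this linear order recovers the chosen cyclic order on $H(v)$.

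The key observation, which I would establish next, is that for a fixed cyclic order on $H(v)$ and a fixed halfedge $h\in H(v)$ there is exactly one linear order on $H(v)\setminus h$ making $\pi$ a morphism in $\bLambda$: namely the one obtained by ``cutting'' the cyclic order at $h$, i.e. declaring the successor $h+1$ to be minimal and reading around. Concretely this is precisely the element of $H(v)^\vee$ — a linear order whose cyclic closure is the given one — that has $h$ as its maximal element, and such an element exists and is unique by the discussion of interstice duality. Consequently, once the cyclic orders on the vertex sets $H(v)$ are chosen, every morphism in $I(\Gamma)$ lifts uniquely, and moreover the cyclic order forced on each two-element edge set $\gamma(e)$ is the unique one and hence automatically consistent for the two incident morphisms. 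Therefore a lift of $\gamma$ along $\bLambda\to\Set$ is the same as a choice, for every vertex $v$, of a cyclic order on $H(v)$ — which is exactly the datum of a ribbon structure on $\Gamma$. This sets up a bijection between ribbon structures and lifts; I would then note that this bijection is plainly natural and respects the obvious identifications, so it is an equivalence of the relevant groupoids (or simply an equality of sets of structures), completing the proof.

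The main obstacle is the verification in the previous paragraph that the lexicographic compatibility condition is satisfied by — and only by — the ``cut at $h$'' linear order, and that the induced cyclic order on the edge causes no obstruction. This amounts to carefully matching Example~\ref{exa:lex}'s recipe against the definition of cyclic closure: one checks that skipping the empty fibers (all fibers of $\pi$ other than the two nonempty ones) and then running the lexicographic construction reproduces, step by step, going around $H(v)$ starting just after $h$. This is a direct but slightly fiddly unravelling of definitions, and it is where a careful author would spend a sentence or two; everything else is formal. I would present the argument as: (1) describe what a lift consists of; (2) prove the uniqueness-of-fiber-order lemma via interstice duality; (3) deduce that lifts biject with vertexwise cyclic orders, i.e. ribbon structures; (4) remark on naturality.
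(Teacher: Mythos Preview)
Your argument is correct. The paper, however, does not actually supply a proof of this proposition: it is stated and then immediately used, the author evidently regarding it as a direct unwinding of the definitions. Your write-up therefore fills in exactly what the paper leaves implicit, and your key step --- that for a fixed cyclic order on $H(v)$ and a chosen halfedge $h$ there is a unique linear order on $H(v)\setminus\{h\}$ making $\pi$ a morphism of $\bLambda$, because the lexicographic recipe forces $h+1$ to be minimal and the rest to follow the cycle --- is precisely the content that makes the statement true. The observation that the cyclic order on a two-element edge set is unique, so no compatibility obstruction arises on the edge side, is the only other point needed, and you address it. There is nothing to compare approaches against; your proof is the natural (and essentially the only) one.
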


The advantage of the interpretation of a ribbon structure given in Proposition \ref{prop:ribbon} is
that it facilitates the passage to interstices: Given a ribbon graph $\Gamma$ with corresponding
incidence diagram
\[
	\gamma: I(\Gamma) \to \bLambda
\]
we introduce the {\em coincidence diagram}
\[
  \delta: I({\Gamma})^{\op} \lra \bLambda
\]
obtained by postcomposing $\gamma^{\op}$ with the interstice duality functor $\bLambda^{\op} \to
\bLambda$.

A morphism $(f,\eta): \Gamma \to \Gamma'$ of ribbon graphs consists of 
\begin{enumerate}
	\item a functor $f: I(\Gamma) \to I(\Gamma')$ of incidence categories,
	\item a natural transformation $\eta: f^*\gamma' \to \gamma$ of incidence diagrams.
\end{enumerate}

\begin{ex} \label{ex:contraction} Let $\Gamma$ be a graph and let $e$ be an edge incident to two distinct vertices
	$v$ and $w$. We define a new graph $\Gamma'$ obtained from $\Gamma$ by contracting $e$ as
	follows: The set of halfedges $H'$ is given by $H \setminus e$ and the set of vertices $V'$
	is obtained from $V$ by identifying $v$ and $w$. The involution $\tau$ on $H$ restricts to
	define an involution $\tau'$ on $H'$. We define $s: H' \to V'$ as the composite
	of the restriction of $s: H \to V$ to $H'$ and the quotient map $V \to V'$. 
	
	We obtain a natural functor $f:I(\Gamma) \to I(\Gamma')$ of incidence categories which collapses the objects $v$, $w$ 
	and $e$ to $\overline{v}$. Denoting by $\gamma$ and $\gamma'$ the set-valued incidence
	diagrams, we construct a natural transformation $\eta: f^*\gamma' \to \gamma$ as follows. On
	objects of $I(\Gamma)$ different from $v$, $w$, and $e$, we define $\eta$ to be the identity
	map. To obtain the values of $\eta$ at $v$, $w$, and $e$, note that we have a natural commutative diagram 
	\[
		\begin{tikzcd}
			H(v) \arrow{d} & \\
			e &  \arrow{l}[swap]{\eta_e} \arrow{lu}[swap]{\eta_v} \arrow{ld}{\eta_w} H(\overline{v})\\
			H(w) \arrow{u} & 
		\end{tikzcd}
	\]
	as in Lemma \ref{lem:pullback}. The maps in the diagram determine the values of $\eta$
	as indicated.  

	Finally, assume that $\Gamma$ carries a ribbon structure. Then Lemma \ref{lem:pullback}(1)
	implies that $\Gamma'$ carries a unique ribbon structure such that the natural
	transformation $\eta$ lifts to $\bLambda$-valued incidence diagrams. 
\end{ex}

A morphism of ribbon graphs $\Gamma \to \Gamma'$ as constructed in Example \ref{ex:contraction} is
called an {\em edge contraction}. The following Proposition indicates the relevance of Lemma
\ref{lem:pullback}(2).

\begin{prop}\label{prop:ribkan} Let $\Gamma$ be a ribbon graph and let $(f,\eta): \Gamma \to \Gamma'$ be an edge
	contraction. Then the natural transformation
	\[
		\eta: f^* \gamma' \lra \gamma
	\]
	exhibits $\gamma'$ as a right Kan extension of $\gamma$ along $f$.
\end{prop}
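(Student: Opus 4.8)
The plan is to verify the universal property of the right Kan extension pointwise, using the fact that $f: I(\Gamma) \to I(\Gamma')$ is a very simple functor: it is a bijection away from the three objects $v, w, e$, which all get collapsed to a single object $\overline{v}$. Since right Kan extension along $f$ can be computed objectwise via limits over comma categories $\overline{v}/f$ (or rather $f/\overline{v}$, depending on variance), the only nontrivial computation is at the object $\overline{v} \in I(\Gamma')$; at every other object the comma category has a terminal object and the Kan extension formula returns the value of $\gamma$ there, matching $\eta$ which is the identity at such objects.

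First I would unwind the comma category $(\overline{v} \downarrow f)$ whose objects are pairs $(x, \overline{v} \to f(x))$ with $x \in I(\Gamma)$. Because morphisms in the incidence category $I(\Gamma')$ only go from vertices to edges, and because $\overline{v}$ is the image of the vertices $v, w$ and the edge $e$, the objects of this comma category are: the vertex $v$ (with the identity-type morphism $\overline{v} = f(v)$), the vertex $w$ (similarly), the edge $e$, together with, for every edge $e' \neq e$ of $\Gamma'$ incident to $\overline{v}$, the corresponding edges/vertices of $\Gamma$ mapping to them. In fact the relevant diagram decomposes: the "new" part is exactly the cospan $v \to e \leftarrow w$ in $I(\Gamma)$ (two halfedges of $e$, one in $H(v)$ and one in $H(w)$, since $e$ is incident to the two distinct vertices $v, w$), and then the limit of $\gamma$ over this cospan is precisely the pullback $H(v) \times_e H(w)$. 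By Lemma \ref{lem:pullback}(1), this pullback, taken in $\Set$, lifts uniquely to $\bLambda$ with the prescribed cyclic orders, and by Lemma \ref{lem:pullback}(2) the square is a pullback in $\bLambda$ as well; this pullback is by construction $\gamma'(\overline{v}) = H(\overline{v})$, and the maps $\eta_v, \eta_w, \eta_e$ of Example \ref{ex:contraction} are exactly the legs of the limit cone.

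The key steps, in order: (1) identify the comma category $(\overline{v} \downarrow f)$ and check that it is, up to cofinality, the cospan category $\{v \to e \leftarrow w\}$ (the contributions from other edges of $\Gamma'$ split off as a disjoint product and each has a terminal object); (2) invoke the pointwise formula for right Kan extensions (e.g. \cite[4.3.2.2, 4.3.3.2]{lurie:htt}) to reduce $(\Lan\text{-dual}\ \gamma)(\overline{v})$ — more precisely the right Kan extension — to $\lim$ of $\gamma$ over that cospan, i.e. to the pullback $H(v) \times_e H(w)$ in $\bLambda$; (3) use Lemma \ref{lem:pullback} to identify this pullback with $H(\overline{v}) = \gamma'(\overline{v})$ compatibly with $\eta$; (4) check the remaining objects $x \neq \overline{v}$, where $(x \downarrow f)$ has an initial object $x$ itself, so the Kan extension value is $\gamma(x)$ and $\eta_x = \id$ does the job; (5) assemble these pointwise checks into the statement that $\eta$ exhibits $\gamma'$ as the right Kan extension $f_* \gamma$.

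The main obstacle I anticipate is a careful, variance-correct handling of the comma categories and the direction of the Kan extension: the incidence category has morphisms pointing from vertices to edges, the natural transformation $\eta$ of Example \ref{ex:contraction} points $f^*\gamma' \to \gamma$ (so this is genuinely a \emph{right} Kan extension, adjoint to restriction), and one must be sure that the cospan $v \to e \leftarrow w$ — rather than its opposite — is what controls the limit, so that Lemma \ref{lem:pullback}, which is stated for pullback squares, applies directly. A secondary point requiring care is the claim that the contributions of edges $e' \neq e$ genuinely decouple and contribute nothing beyond $\gamma(e')$; this is where one uses that contracting $e$ does not alter the incidence data at any vertex other than the merged pair, so the relevant slices each have a terminal object. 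Both of these are bookkeeping rather than conceptual difficulties, so I expect "follows by direct inspection" is nearly the right level of detail, but I would spell out the comma category at $\overline{v}$ explicitly to make the reduction to Lemma \ref{lem:pullback} transparent.
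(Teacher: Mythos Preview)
Your proposal is correct and follows essentially the same approach as the paper: apply the pointwise formula for right Kan extensions, observe that the condition is trivial at every object of $I(\Gamma')$ except $\overline{v}$, and at $\overline{v}$ reduce the limit over $\overline{v}/f$ to the pullback square of Lemma~\ref{lem:pullback}(2). The paper's proof is simply a terser version of your outline, eliding the explicit initiality check for the cospan $\{v \to e \leftarrow w\}$ inside the comma category (your phrasing ``split off as a disjoint product'' is not quite accurate---the extra edge objects are not disjoint from the cospan but rather receive a unique map from $v$ or $w$, which is exactly what makes the cospan initial---but your conclusion is correct).
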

\begin{proof} By the pointwise formula for right Kan extensions, it suffices to verify that, for
	every object $y \in I(\Gamma')$, the natural transformation $\eta$ exhibits $\gamma'(y)$ as the
	limit of the diagram
	\[
		y/f \lra \Lambda, \; (x, y \to f(x)) \mapsto \gamma(x).
	\]
	Unravelling the definitions, this is a trivial condition unless $y$ is the object
	corresponding to the vertex $\overline{v}$ under the contracted edge. For $y = \overline{v}$
	the condition reduces to Lemma \ref{lem:pullback}(2).
\end{proof}

\subsubsection{State sums on ribbon graphs}
\label{sec:statesum}

We introduce a category $\Rib^*$ with objects given by pairs $(\Gamma, x )$ where $\Gamma$ is a
ribbon graph and $x$ is an object of the incidence category $I(\Gamma)$. A morphism 
$(\Gamma, x) \to (\Gamma',y)$ consists of a morphism $(f,\eta): \Gamma \to \Gamma'$ of
ribbon graphs together with a morphism $y \to f(x)$ in $I(\Gamma')$. The category $\Rib^*$ comes
equipped with a forgetful functor $\pi: \Rib^* \to \Rib$ and an evaluation functor
\[
  \ev: \Rib^* \lra \bLambda, \; (\Gamma, x) \mapsto \delta(x)
\]
where $\delta$ denotes the coincidence diagram of $\Gamma$.
Let $\C$ be an $\infty$-category with colimits, and let $X: \N(\bLambda) \to \C$ be a cocyclic object in
$\C$. The functor
\[
  	\rho_X = \N(\pi)_!(X \circ \N(\ev)): \N(\Rib) \lra \C 
\]
is called the {\em state sum functor of $X$}. Here, $\N(\pi)_!$ denotes the $\infty$-categorical
left Kan extension defined in \cite[4.3.3.2]{lurie:htt}. For a ribbon graph $\Gamma$, the object
$X({\Gamma}) := \rho_X(\Gamma)$ is called the {\em state sum of $X$ on $\Gamma$}.

\begin{prop}\label{prop:pointwise}
	The state sum of $X$ on $\Gamma$ admits the formula
	\[
		X(\Gamma) \simeq \colim X \circ \N(\delta)
	\]
	where $\delta: I(\Gamma)^{\op} \to \bLambda$ denotes the coincidence diagram of $\Gamma$.
\end{prop}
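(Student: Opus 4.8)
The plan is to compute the left Kan extension $\N(\pi)_!(X \circ \N(\ev))$ pointwise using the formula of \cite[4.3.3.2]{lurie:htt}. For a ribbon graph $\Gamma$, viewed as an object of $\N(\Rib)$, this formula expresses $\rho_X(\Gamma)$ as the colimit of $X \circ \N(\ev)$ restricted to the fiber $\N(\pi)_{/\Gamma}$, the comma $\infty$-category of objects of $\N(\Rib^*)$ lying over $\Gamma$. The first step is therefore to identify this comma category. By definition of $\Rib^*$, an object over $\Gamma$ consists of a ribbon graph $\Gamma'$, an object $x \in I(\Gamma)$ wait --- more carefully, an object of $\Rib^*$ over $\Gamma$ is a pair $(\Gamma', y)$ together with a morphism $\Gamma' \to \Gamma$ in $\Rib$; unwinding, since $\pi(\Gamma',y) = \Gamma'$, the comma category $\Rib^*_{/\Gamma}$ has objects $((\Gamma', y), \Gamma' \to \Gamma)$. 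I would first argue that this comma category is cofinally approximated by the subcategory where $\Gamma' = \Gamma$ and the structure map is the identity, i.e. by $I(\Gamma)^{\op}$ itself (the opposite appearing because a morphism $(\Gamma,x) \to (\Gamma,x')$ in $\Rib^*$ over the identity of $\Gamma$ is a morphism $x' \to f(x) = x$ in $I(\Gamma)$, hence an arrow $x' \to x$).

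The key step is then to establish that the inclusion $\N(I(\Gamma))^{\op} \hookrightarrow \N(\pi)_{/\Gamma}$ is cofinal, so that by \cite[4.1.1.8]{lurie:htt} the colimit defining $\rho_X(\Gamma)$ may be computed along $\N(I(\Gamma))^{\op}$ alone. To verify cofinality via Joyal's criterion \cite[4.1.3.1]{lurie:htt}, I would show that for every object $((\Gamma',y), \Gamma' \xrightarrow{(f,\eta)} \Gamma)$ of the comma category, the relevant slice category --- objects of $\N(I(\Gamma))^{\op}$ receiving a map from it --- is weakly contractible. Concretely, a morphism in $\Rib^*$ from $(\Gamma',y)$ to $(\Gamma,x)$ over the given map to $\Gamma$ amounts to a choice of $x \in I(\Gamma)$ together with a morphism $x \to f(y)$ in $I(\Gamma)$; the slice in question thus has an initial object given by $x = f(y)$ with the identity morphism. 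An initial object forces contractibility, so cofinality follows. Once cofinality is in hand, restricting $X \circ \N(\ev)$ along the inclusion gives precisely $X \circ \N(\delta)$, since on the subcategory $\{\Gamma\} \times I(\Gamma)^{\op}$ the evaluation functor sends $(\Gamma, x)$ to $\delta(x)$ by definition. This yields the asserted formula $X(\Gamma) \simeq \colim X \circ \N(\delta)$.

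The main obstacle I anticipate is the cofinality verification, specifically bookkeeping the variance correctly: morphisms in $\Rib^*$ involve a morphism of ribbon graphs $\Gamma' \to \Gamma$ in one direction but the natural transformation $\eta$ and the structure map $y \to f(x)$ go the other way, and the fiber-over-$\Gamma$ condition interacts with all of these. One must be careful that a general object $(\Gamma', y) \to \Gamma$ genuinely maps into the subcategory $I(\Gamma)^{\op}$ and not merely receives maps from it, and that the relevant comma/slice used in Joyal's criterion has the initial (rather than terminal) object as claimed. A secondary, more routine point is checking that $\ev$ restricted to the subcategory really does agree with $\delta$ --- this is immediate from the definitions of $\Rib^*$, $\ev$, and the coincidence diagram $\delta = $ interstice-dual of $\gamma^{\op}$, but it should be stated explicitly. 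Everything else is an application of the cited pointwise Kan extension and cofinality results from \cite{lurie:htt}.
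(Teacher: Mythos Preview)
Your proposal is correct and follows exactly the paper's approach: apply the pointwise formula for the left Kan extension and then show that the inclusion $\N(I(\Gamma))^{\op} \hookrightarrow \N(\pi/\Gamma)$ is cofinal. The paper simply asserts this cofinality without justification, whereas you supply the argument via Joyal's criterion and the initial object $x = f(y)$; your variance bookkeeping is accurate, so the concerns you flag do not materialize.
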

\begin{proof} By the pointwise formula for left Kan extensions, we have
    \[
	    X(\Gamma) = \colim X \circ \N(\ev)_{|\N(\pi/\Gamma)}.
    \]
    The nerve of the functor 
    \[
	    I(\Gamma)^{\op} \lra \pi/\Gamma, x \mapsto ((\Gamma,x), \Gamma \overset{\id}{\to}
	    \Gamma)
    \]
    is cofinal which implies the claim.
\end{proof}

\begin{ex}\label{ex:universal} The universal example of a cocyclic object with values in an $\infty$-category with
    colimits is the Yoneda embedding $j: \N(\bLambda) \to \P(\bLambda)$ where $\P(\bLambda)$ denotes
    the $\infty$-category $\Fun(\N(\bLambda)^{\op}, \S)$ of cyclic spaces. We obtain a functor
    \[
	\rho_j: \N(\Rib) \lra \P(\bLambda)
    \]
    which realizes a ribbon graph as a cyclic space. This functor is the universal state sum: Given
    any cocyclic object $X: \N(\bLambda) \to \C$ where $\C$ has colimits, we have 
    \begin{equation}\label{eq:kanext}
		    \rho_X \simeq j_!X \circ \rho_j
    \end{equation}
    where we use Proposition \ref{prop:pointwise} and the fact (\cite[5.1.5.5]{lurie:htt}) that $j_!X$ commutes with colimits. 
    We will use the notation
    \[
	    \Lambda({\Gamma}) := \rho_j(\Gamma) 
    \]
    for the state sum of $j$ on $\Gamma$.
\end{ex}

The following proposition explains the relevance of the $2$-Segal condition for state sums.

\begin{prop}\label{thm:funcontract} Let $\C$ be an $\infty$-category with
    	colimits and let $X: \N(\bLambda) \to \C$ be a cocyclic object. Then $X$ is unital $2$-Segal
	if and only if the state sum functor
	\[
		\rho_X: \N(\Rib) \lra \C, \; \Gamma \mapsto X(\Gamma)
	\]
	maps edge contractions in $\Rib$ to equivalences in $\C$.
\end{prop}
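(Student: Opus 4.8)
\emph{Proof sketch.} The plan is to reduce everything to the pointwise colimit formula of Proposition~\ref{prop:pointwise} together with the behaviour of \emph{coincidence} diagrams under edge contraction. Fix an edge contraction $(f,\eta)\colon\Gamma\to\Gamma'$ collapsing an edge $e$ between distinct vertices $v,w$ to a vertex $\overline v$, as in Example~\ref{ex:contraction}. The first step I would take is to dualize Proposition~\ref{prop:ribkan}: since interstice duality is an equivalence $\bLambda^{\op}\to\bLambda$, it is compatible with the formation of pointwise Kan extensions up to reversal of variance, so the statement that $\gamma'$ is a pointwise right Kan extension of $\gamma$ along $f$ becomes the statement that the coincidence diagram $\delta_{\Gamma'}\colon I(\Gamma)^{\op}\to\bLambda$ is a pointwise \emph{left} Kan extension of $\delta_\Gamma$ along $f^{\op}$. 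Concretely: for $y\in I(\Gamma')$ with $y\neq\overline v$ the comma category $f^{\op}/y$ has a terminal object and $\delta_{\Gamma'}(y)$ is the corresponding value of $\delta_\Gamma$; for $y=\overline v$ the comma category contains a cofinal span, and $\delta_{\Gamma'}(\overline v)=H(\overline v)^{\vee}$ is the pushout in $\bLambda$ of $H(v)^{\vee}\leftarrow e^{\vee}\rightarrow H(w)^{\vee}$, which, by Example~\ref{ex:cyclic-pushout} and Lemma~\ref{lem:pullback}(1), is (a relabelling of) a square of the form \eqref{eq:pushout} when both $v$ and $w$ have valence $\ge 2$, and of the form \eqref{eq:pushout2} when one of them is univalent.

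Granting this, the \emph{only if} direction runs as follows. Assume $X$ is unital $2$-Segal; by Remark~\ref{rem:2segalpush} this says precisely that $X$ sends the pushout squares \eqref{eq:pushout} and \eqref{eq:pushout2} in $\bLambda$ to pushout squares in $\C$. Combined with the previous paragraph, $X$ preserves each of the pointwise colimits exhibiting $\delta_{\Gamma'}$ as a left Kan extension of $\delta_\Gamma$, so that $X\circ\N(\delta_{\Gamma'})$ is the left Kan extension of $X\circ\N(\delta_\Gamma)$ along $\N(f^{\op})$. Since forming a colimit over $\N(I(\Gamma)^{\op})$ factors through left Kan extension along $\N(f^{\op})$, Proposition~\ref{prop:pointwise} gives
\[
	X(\Gamma')\;\simeq\;\colim X\circ\N(\delta_{\Gamma'})\;\simeq\;\colim X\circ\N(\delta_\Gamma)\;\simeq\;X(\Gamma),
\]
and one checks this equivalence is the map induced by $\rho_X=\N(\pi)_!(X\circ\N(\ev))$ on the edge contraction, using the cofinal functors $\N(I(\Gamma)^{\op})\to\N(\pi/\Gamma)$ and $\N(I(\Gamma')^{\op})\to\N(\pi/\Gamma')$ from the proof of Proposition~\ref{prop:pointwise} to see that $\rho_X$ on this morphism is computed by $f^{\op}$ together with the transformation induced by $\eta$, i.e. by the Kan extension comparison map.

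For the \emph{if} direction it suffices to produce, for every polygonal subdivision $0\le i<j\le n$ (resp. every $0\le k<n$), an edge contraction whose associated comparison map is exactly the map asserted to be invertible by the $2$-Segal condition \eqref{eq:2segal} (resp. unitality \eqref{eq:unital}). For the former, let $\Gamma$ be the ribbon graph with two vertices $v,w$, one internal edge $e=\{h,\tau(h)\}$ with $h\in H(v)$, $\tau(h)\in H(w)$, and all remaining halfedges external, with the cyclic orders arranged (possible by Lemma~\ref{lem:pullback}(1) and Example~\ref{ex:cyclic-pushout}) so that under interstice duality the span $H(v)^{\vee}\leftarrow e^{\vee}\rightarrow H(w)^{\vee}$ is the cyclic closure of $\{i,i+1,\dots,j\}\leftarrow\{i,j\}\rightarrow\{0,1,\dots,i,j,\dots,n\}$; contracting $e$ yields the corolla on $n+1$ external halfedges, whose coincidence diagram is constant at the cyclic closure of $\{0,1,\dots,n\}$. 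By Proposition~\ref{prop:pointwise}, $\rho_X(\Gamma)$ is the pushout of the span obtained by applying $X$, $\rho_X(\Gamma')$ is the value of $X$ on the cyclic closure of $\{0,1,\dots,n\}$, and the edge contraction induces the canonical map of \eqref{eq:2segal}, which is thus an equivalence. The unital case is identical with $w$ univalent, so that $H(w)^{\vee}\cong\langle 0\rangle$ and the span is the one in \eqref{eq:pushout2}. Since every instance of \eqref{eq:2segal} and \eqref{eq:unital} arises this way, $X$ is unital $2$-Segal.

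The step I expect to be the main obstacle is the identification, in the \emph{only if} direction, of the map $\rho_X(\Gamma)\to\rho_X(\Gamma')$ with the Kan extension comparison map: this is essentially bookkeeping, but it requires checking that the cofinality statements of Proposition~\ref{prop:pointwise} are compatible with $f^{\op}$ and $\eta$ at the level of the $\infty$-categorical left Kan extension $\N(\pi)_!$. The remaining ingredients --- dualizing Proposition~\ref{prop:ribkan} through interstice duality, and recognizing the local pushout at $\overline v$ as \eqref{eq:pushout} or \eqref{eq:pushout2} --- are direct.
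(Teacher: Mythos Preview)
Your proposal is correct and, for the \emph{only if} direction, follows essentially the same route as the paper: identify the map $\rho_X(\Gamma)\to\rho_X(\Gamma')$ with the canonical comparison of colimits, show that $X\circ\delta_{\Gamma'}$ is a pointwise left Kan extension of $X\circ\delta_\Gamma$ along $f^{\op}$, and conclude via functoriality of Kan extensions. The paper compresses this into the sentence ``$X\circ\eta^{\vee}$ exhibits $X\circ\delta'$ as a left Kan extension of $X\circ\delta$'' and refers to ``the argument of Proposition~\ref{prop:ribkan}, Lemma~\ref{lem:pullback}, and Remark~\ref{rem:2segalpush}''; your version unpacks this by first dualizing Proposition~\ref{prop:ribkan} through interstice duality to get the Kan extension statement in $\bLambda$, and then invoking the $2$-Segal condition to pass to $\C$. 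That is the same idea, only with the intermediate step made explicit.

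Where your proposal goes beyond the paper is the \emph{if} direction: the paper's proof as written does not address the converse at all. Your argument---constructing, for each polygonal subdivision or degeneracy, a two-vertex ribbon graph whose single edge contraction realizes the relevant $2$-Segal or unitality square---is the natural way to do it and is correct. So your write-up is in fact more complete than the paper's on this point. The bookkeeping worry you flag (identifying $\rho_X$ on the edge contraction with the Kan extension comparison map) is handled in the paper by simply writing down the composite $\colim(X\circ\delta)\to\colim(X\circ\delta'\circ f^{\op})\to\colim(X\circ\delta')$; your cofinality argument via Proposition~\ref{prop:pointwise} amounts to the same thing.
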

\begin{proof}
	Let $(f,\eta): \Gamma \to \Gamma'$ be a morphism of ribbon graphs. The associated morphism 
	$\rho_X(f,\eta): X(\Gamma) \to X(\Gamma')$ is given by the composite
	\[
	\colim (X \circ \delta) \overset{X \circ \eta^{\vee}}{\lra}  \colim (X \circ \delta' \circ f^{\op})
	\lra
	\colim (X \circ \delta').
	\]
	We claim that, if $(f,\eta)$ is an edge contraction, then $X \circ \eta^{\vee}$ exhibits $X
	\circ \delta'$ as a left Kan extension of $X \circ \delta$. This implies the result since a
	colimit is given by the left Kan extension to the final category and left Kan extension
	functors are functorial in the sense $f_{!} \circ g_{!} \simeq (f \circ g)_{!}$.
	The claim follows immediately from the argument of Proposition \ref{prop:ribkan}, Lemma
	\ref{lem:pullback}, and Remark \ref{rem:2segalpush} .
\end{proof}

\begin{rem} In the situation of Proposition \ref{thm:funcontract}, we may restrict ourselves to the
	subcategory $\Rib' \subset \Rib$ generated by egde contractions and isomorphisms. Then by
	the statement of the theorem, we obtain a functor
	\[
		\N(\Rib')_{\simeq} \to \C, \Gamma \mapsto X(\Gamma)
	\]
	where $\N(\Rib')_{\simeq} = \on{Sing}|\N(\Rib')|$ denotes the $\infty$-groupoid completion
	of the $\infty$-category $\N(\Rib')$. The automorphism group in $\N(\Rib')_{\simeq}$ of a 
	ribbon graph $\Gamma$ which represents a stable oriented marked
	surface $(S,M)$ can be identified with the mapping class group $\on{Mod}(S,M)$. The above functor
	implies the existence of an $\infty$-categorical action of $\on{Mod}(S,M)$ on $X(\Gamma)$
	which is a main result of \cite{dk:triangulated}.
\end{rem}

\subsection{Paracyclically ordered sets and framed graphs}

Let $(S,M)$ be a stable oriented marked surface. We may interpret the orientation as a reduction of
structure group of the tangent bundle of $S \setminus M$ along $\GL_2^+(\RR) \subset \GL_2(\RR)$. We
define a {\em framing} of $(S,M)$ as a further lift of the structure group along the universal cover 
\[
	\widetilde{\GL_2^+(\RR)} \lra \GL_2^+(\RR).
\]
Up to contractible choice, this datum is equivalent to a trivialization of the tangent
bundle of $S \setminus M$. In this section, we describe a state sum formalism based on a
combinatorial model for stable framed marked surfaces as developed in \cite{dk:crossed}. This
amounts to a variation of the constructions in the previous section, obtained by replacing the
cyclic category by the {\em paracyclic category}.

\subsubsection{Paracyclically ordered sets}

Let $J$ be a finite nonempty set. We define a {\em paracyclic order} on $J$ to be a cyclic order on
$J$ together with the choice of a $\ZZ$-torsor $\widetilde{J}$ and a $\ZZ$-equivariant map
$\widetilde{J} \to J$. A morphism of paracyclically ordered sets $(J, \widetilde{J}) \to (J',
\widetilde{J'})$ consists of a commutative diagram of sets
\[
	\xymatrix{
		\widetilde{J} \ar[r]^{\widetilde{f}}\ar[d] & \widetilde{J'} \ar[d]\\
		J \ar[r]^{f} & J'
	}
\]
such that $\widetilde{f}$ is monotone with respect to the $\ZZ$-torsor linear orders. The lift
$\widetilde{f}$ equips $f$ naturally with the structure of a morphism of cyclically ordered sets so
that we obtain a forgetful functor
\[
	\bLambda_{\infty} \lra \bLambda
\]
where $\bLambda_{\infty}$ denotes the category of paracyclically ordered sets. As for cyclically
ordered sets, there is a skeleton $\Lambda_{\infty} \subset \bLambda_{\infty}$ consisting of
standard paracyclically ordered sets $(\cn, \widetilde{\cn})$ where we define $\widetilde{\cn} =
\ZZ$ and $\widetilde{\cn} \to \cn$ is given by the natural quotient map.

Given a paracyclically ordered set $(J, \widetilde{J})$, then the cyclic order on the interstice dual
$J^{\vee}$ lifts to a natural paracyclic order given by 
\[
	\widetilde{J^{\vee}} = 
	\Hom_{\bLambda_{\infty}}( (J,\widetilde{J}),(\langle 0 \rangle, \widetilde{ \langle 0 \rangle} )).
\]
This construction extends the self duality of $\bLambda$ to one for $\bLambda_{\infty}$.
All statements of Section \ref{sec:cyclically} hold mutatis mutandis for $\bLambda_{\infty}$. 

\subsubsection{Framed graphs and state sums}

We define a {\em framed graph} $\Gamma$ to be a graph $\Gamma$, equipped with a lift
\[
\begin{tikzcd}
	& \bLambda_{\infty} \arrow{d}\\
	I(\Gamma) \arrow{r} \arrow[dashed]{ur} & \Set
\end{tikzcd}
\]
of the incidence diagram of $\Gamma$ where $\bLambda_{\infty}$ denotes the category of paracyclically
ordered sets. Framed graphs form a category $\Rib_{\infty}$ which is defined in complete analogy
with $\Rib$.

Let $\Gamma$ be a framed graph with incidence diagram 
	\[
		\gamma: I(\Gamma) \to \bLambda_{\infty}.
	\]
	Let $e = \{h, \tau(h)\}$ be an edge in $\Gamma$ incident to the vertices $v = s(h)$ and $w =
	s(\tau(h))$, and let $h'$ be a half-edge incident to $w$. Let $\widetilde{h}$ be a lift of
	$h$ to an element of the $\ZZ$-torsor $\widetilde{H(v)}$ which is part of the paracyclic
	structure on $H(v)$. Then we may transport this lift along
	the edge $e$ to obtain a lift of $h'$ to an element $\widetilde{h'}$ of $\widetilde{H(w')}$
	as follows: There is a unique lift of $\widetilde{\tau(h)} \in \widetilde{H(w')}$ of
	$\tau(h)$ which maps to $\widetilde{h} - 1$ under $\gamma(\tau(h))$. We the set 
	$\widetilde{h'} = \widetilde{\tau(h)} + i$ where $i\ge 0$ is minimal such that
	$\widetilde{h'}$ lifts $h'$.
	Iterating this transport along a loop $l$ returning to the half-edge $h$, we obtain another lift
	$\overline{h}$ of $h$. The integer $\overline{h} -
	\widetilde{h}$ only depends on $l$ and we refer to it as the {\em winding
	number of $l$}.

\begin{rem} As explained in \cite{dk:crossed}, framed graphs provide a combinatorial model
	for stable marked surfaces $(S,M)$ equipped with a trivialization of the tangent bundle of
	$S \setminus M$. The above combinatorial construction then coincides with the geometric
	concept of winding number computed with respect to the framing.
\end{rem}

Let $\C$ be an $\infty$-category with colimits, and let $X^{\bullet}: \N(\bLambda_{\infty}) \to \C$ be a coparacyclic object in
$\C$. Then, given a framed graph $\Gamma$, we have a state sum 
\[
	X(\Gamma) = \colim X \circ \delta
\]
where $\delta: I(\Gamma)^{\op} \to \bLambda_{\infty}$ denotes the coincidence diagram of $\Gamma$.
The various state sums naturally organize into a functor
\[
	\N(\Rib_{\infty}) \lra \C, \Gamma \mapsto X(\Gamma).
\]

\begin{rem} As shown in \cite{dk:crossed}, the state sum $X(\Gamma)$ of a framed graph with values in a
	coparacyclic unital $2$-Segal object $X$ admits an action of the {\em framed} mapping class group
	of the surface.
\end{rem}

\subsection{The universal loop space}
\label{sec:universal}

We give a first example of a state sum which will play an important role later on. 
Consider the functor
\[
		\Lambda \lra \Grp,\; \cn \mapsto \pi_1(D/\{0,1,\dots,n\})
\]
where $D/\{0,1,\dots,n\}$ denotes the quotient of the unit disk by $n+1$ marked points on
the boundary. Replacing the fundamental groups by their nerves, we obtain a functor
\begin{equation}\label{eq:uniloop}
		L^{\bullet}: \N(\Lambda) \lra \S_{\ast}
\end{equation}
where $\S_{\ast}$ denotes the $\infty$-category of pointed spaces. The
cosimplicial pointed space underlying $L^{\bullet}$ is $1$-Segal and hence, by Proposition
\ref{prop:12segal}, unital $2$-Segal.

\begin{rem}
	Note that the group $\pi_1(D/\{0,1,\dots,n\})$ is a free group on $n$ generators so that
	$L^n$ is equivalent to a bouquet of $n$ one-dimensional spheres. Given any pointed space
	$X$, the cyclic pointed $1$-Segal space $\Map(L^{\bullet}, X)$ describes the loop space $\Omega X$
	together with its natural group structure. Similarly, the cocyclic pointed $1$-Segal space
	$L^{\bullet} \otimes X$ describes the suspension of $X$ together with its natural cogroup structure.
\end{rem}

\begin{prop}\label{prop:universal} Let $(S,M)$ be a stable marked oriented surface represented by a ribbon graph $\Gamma$.
	Then we have 
	\[
		L(\Gamma) \simeq S/M.
	\]
\end{prop}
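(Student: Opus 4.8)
The plan is to identify the state sum $L(\Gamma) = \colim L^{\bullet} \circ \N(\delta)$ with the pointed space $S/M$ by recognizing both sides as colimits of the same diagram indexed by the incidence category. First I would recall that a ribbon graph $\Gamma$ representing $(S,M)$ comes with its canonical ``thickening'' or ribbon surface $\Sigma_\Gamma$, which deformation retracts onto $\Gamma$ and whose one-point compactification (collapsing the boundary together with the deleted marked points) is homotopy equivalent to $S/M$. The strategy is then to compute $\Sigma_\Gamma$ as a homotopy colimit over $I(\Gamma)^{\op}$: the surface is glued from pieces $D/\{0,1,\dots,n\}$ associated to vertices (a disk with $n+1$ boundary marked points, where $n+1 = |H(v)|$) along pieces associated to edges (which are intervals with two marked endpoints, i.e.\ $D/\{0,1\}$, contractible). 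Interstice duality is exactly what converts the vertex data $H(v)$ into the interstices $H(v)^{\vee}$ that parametrize the ``corners'' or boundary segments of the disk seen in the surface.

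The key steps, in order: (1) Show that for a single vertex $v$ with $|H(v)| = n+1$, the value $\delta(v) = \gamma(v)^{\vee}$ has cardinality $n+1$, and $L$ applied to it gives $\pi_1(D/\co{n})$'s nerve, i.e.\ a bouquet of $n$ circles, which is precisely $D/\{\text{its } n+1 \text{ marked boundary points}\}$ — this is the local model of the ribbon surface near $v$. (2) Show that for an edge $e$, $\delta(e)$ has cardinality $2$ and $L(\delta(e)) = L^1 = *$ is contractible (it is $\pi_1$ of a disk with two boundary marked points, which is a point after collapsing), matching the fact that an edge contributes a contractible gluing region. (3) Observe that the maps $v \to e$ in $I(\Gamma)$ dualize, via $\delta$, to the maps $\delta(e) \to \delta(v)$ encoding how each interstice of the edge is a degenerate interstice of the vertex; applying $L^{\bullet}$ and using that $L^{\bullet}$ is unital $2$-Segal (hence the state sum is computed by a genuine homotopy colimit over $I(\Gamma)^{\op}$ by Proposition~\ref{prop:pointwise}), conclude $L(\Gamma) \simeq \hocolim_{I(\Gamma)^{\op}} L \circ \delta$. (4) Identify this homotopy colimit with the pushout/gluing description of $S/M$: the pointed space $S/M$ is obtained from the ribbon surface by collapsing $M$ and $\partial S$, and the ribbon surface itself has the same cell structure as the diagram $L\circ\delta$ — one $0$-cell, one $1$-cell per half-edge pair, one $2$-cell per vertex (the disk), glued via the incidence relations. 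A clean way to organize (4) is to build a map $\colim L\circ\delta \to S/M$ geometrically (using the embedding $\Gamma \hookrightarrow S$ and the standard disks glued in along $\Gamma$) and check it is an equivalence by comparing the induced cell structures, or equivalently by checking it on each piece of the colimit diagram where it is an equivalence by construction of steps (1)--(2) and naturality.

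I expect the main obstacle to be step (4): making precise the claim that the homotopy colimit of the combinatorially-defined diagram $L \circ \delta$ over $I(\Gamma)^{\op}$ agrees with the topologically-defined space $S/M$. The subtlety is bookkeeping the orientations/cyclic orders correctly so that the $2$-cells are attached along the right words in the generators of the free groups $\pi_1(D/\co{n})$ — precisely here the ribbon (cyclic) structure, and its image under interstice duality, must be matched against the boundary gluing pattern of the ribbon surface. One should either invoke the known combinatorial model (the ribbon surface of $\Gamma$, with its retraction $\Sigma_\Gamma \simeq \Gamma$ and the homeomorphism $\Sigma_\Gamma \cong S$ coming from the fact that $\Gamma$ is a spanning graph, as in the standard dictionary between ribbon graphs and marked surfaces used in \cite{dk:triangulated}), or else argue directly that collapsing the contractible edge-pieces in the homotopy colimit reduces $L(\Gamma)$ to a single wedge of $2$-cells-over-$1$-cells whose attaching maps spell out exactly the boundary components of $S$ (which are then collapsed along with $M$), yielding $S/M$. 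Either route is essentially a careful unwinding rather than a deep new argument, but it is where all the care is required.
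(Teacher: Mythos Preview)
Your overall approach is essentially that of the paper: compute the state sum as a homotopy colimit over $I(\Gamma)^{\op}$ (Proposition~\ref{prop:pointwise}), replace each value $L^n$ by the homotopy-equivalent geometric model $D/\{0,1,\dots,n\}$, and then recognize the resulting gluing as $S/M$. The paper's proof is exactly this, stated tersely.

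However, step~(2) contains a genuine error. You assert that $L(\delta(e)) = L^1$ is contractible, but this is false: $D/\{0,1\}$ is a disk with two boundary points identified, which is homotopy equivalent to $S^1$, not to a point. Equivalently, $\pi_1(D/\{0,1\}) \cong \ZZ$, so $L^1 \simeq B\ZZ \simeq S^1$. (This is also stated in the paper's Remark following the definition of $L^\bullet$: $L^n$ is a bouquet of $n$ circles, so $L^1 \simeq S^1$ and only $L^0 \simeq \ast$.) You in fact use the correct formula for vertices in step~(1), so the mistake in step~(2) is an inconsistency.

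This error propagates into step~(4), where you describe the cell structure as ``one $1$-cell per half-edge pair'' with the edge region being a ``contractible gluing region.'' The correct picture is the one the paper uses: the edge piece $D/\{0,1\}$ is itself a collapsed disk (homotopically a circle), and the maps $L^1 \to L^n$ in the diagram glue each such piece onto the \emph{boundary cycle} of the adjacent vertex disk $D/\{0,\dots,n\}$, not onto a point. Once this is corrected, the homotopy colimit visibly assembles the quotient $S/M$ by gluing the collapsed vertex-disks along their boundary arcs according to the edge incidences, exactly as the paper sketches. Your strategy survives; only the local computation at edges and the resulting gluing description need to be fixed.
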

\begin{proof} We can compute the state sum defining $L(\Gamma)$ explicitly as a homotopy colimit in
	the category of pointed spaces. We may replace the diagram by a homotopy equivalent one which
	assigns to each $n$-corolla of the graph $\Gamma$ the space $D/\{0,1,\dots,n\}$. The
	homotopy colimit is then obtained by identifying the boundary cycles of the various spaces 
	$D/\{0,1,\dots,n\}$ according to the incidence relations given by the edges of the ribbon
	graphs. It is apparent that the resulting space is equivalent to the quotient space $S/M$.
\end{proof}

For a slight elaboration on Proposition \ref{prop:universal}, let $\C$ be a stable $\infty$-category
with colimits and let $E$ be an object of $\C$. 
By \cite[1.4.2.21]{lurie:algebra}, the functor 
\[
	\Sp(\C) = \Fun^c(\S_{\ast}^{\on{fin}}, \C) \to \C, f \mapsto f(S^0)
\]
from the $\infty$-category of spectrum objects in $\C$ to $\C$ is an equivalence. Therefore, the
object $E$ defines an essentially unique functor 
\begin{equation}\label{eq:E}
		E: \S_{\ast}^{\on{fin}} \to \C 
\end{equation}
which we still denote by $E$. 

\begin{defi} Let $(X,Y)$ be a pair of finite spaces. We introduce the pointed quotient space $X/Y$
	as the pushout
	\[
		\xymatrix{ Y \ar[r]\ar[d] & X\ar[d]\\
		\ast \ar[r] & X/Y }
	\]
	in $\S$. The object $E(X/Y)$ of $\C$ is called the {\em relative homology of the pair
	$(X,Y)$ with coefficients in $E$}. In the case when $\C$ is the category of spectra, this
	terminology agrees with the customary one.
\end{defi}

We introduce the cocyclic object
\begin{equation}\label{eq:universal}
		L_E = E(L^{\bullet}) : \N(\Lambda) \lra \C
\end{equation}
obtained from \eqref{eq:uniloop} by postcomposing with $E: \S_{\ast}^{\on{fin}} \to \C$.

\begin{prop}\label{prop:spectrum} 
	Let $(S,M)$ be a stable marked oriented surface represented by a ribbon graph $\Gamma$.
	Then we have an equivalence
	\[
		L_E(\Gamma) \simeq E(S/M)
	\]
	in $\C$.
\end{prop}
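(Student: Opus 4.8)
The plan is to reduce the statement to Proposition~\ref{prop:universal} by a naturality argument. The key observation is that the state sum construction $X \mapsto (\Gamma \mapsto X(\Gamma))$ commutes with postcomposition along colimit-preserving functors: if $G: \C' \to \C''$ preserves colimits and $X: \N(\bLambda) \to \C'$ is cocyclic, then $(G \circ X)(\Gamma) \simeq G(X(\Gamma))$, since by Proposition~\ref{prop:pointwise} both sides compute $\colim$ of the coincidence diagram, and $G$ commutes with that colimit. First I would record this as a lemma (or invoke it directly). Then, applying it to the functor $E: \S_*^{\on{fin}} \to \C$ of \eqref{eq:E} and to $X = L^{\bullet}$, and using $L_E = E \circ L^{\bullet}$ by definition \eqref{eq:universal}, we obtain $L_E(\Gamma) \simeq E(L(\Gamma))$.

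The one subtlety is that $E$ is a priori only defined and colimit-preserving on \emph{finite} pointed spaces $\S_*^{\on{fin}}$, whereas the state sum $L(\Gamma) \simeq S/M$ is computed as a colimit in all pointed spaces $\S_*$, and the diagram $\N(\delta)$ indexing it takes values in finite spaces (each $L^n$ is a finite bouquet of circles). So I would note that the relevant colimit diagram $\N(\I(\Gamma))^{\op} \to \S_*^{\on{fin}}$ lands in finite spaces, that the colimit over the finite indexing category $\I(\Gamma)$ again lies in $\S_*^{\on{fin}}$, and that the inclusion $\S_*^{\on{fin}} \hookrightarrow \S_*$ preserves such finite colimits; hence applying $E$ to the state sum is unambiguous and $E$ commutes with it. Concretely: $L_E(\Gamma) \simeq \colim (E \circ L^{\bullet} \circ \N(\delta)) \simeq E(\colim (L^{\bullet} \circ \N(\delta))) \simeq E(L(\Gamma))$.

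Combining this with Proposition~\ref{prop:universal}, which gives $L(\Gamma) \simeq S/M$, yields
\[
	L_E(\Gamma) \simeq E(L(\Gamma)) \simeq E(S/M),
\]
which is the claim. I do not expect a genuine obstacle here; the only thing requiring a little care is the finiteness bookkeeping in the previous paragraph, i.e.\ making sure that the colimit defining the state sum is of the sort to which the (only partially defined) functor $E$ legitimately applies and over which it commutes. Everything else is formal from Propositions~\ref{prop:pointwise} and~\ref{prop:universal} together with the fact \cite[5.1.5.5]{lurie:htt} that left Kan extensions commute with colimits.
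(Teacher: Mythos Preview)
Your proposal is correct and follows essentially the same approach as the paper: the paper's proof is the one-line observation that $E(-)$ commutes with finite colimits, so the result follows immediately from Proposition~\ref{prop:universal}. Your additional finiteness bookkeeping (that the indexing category $I(\Gamma)$ is finite and the diagram lands in $\S_*^{\on{fin}}$) merely makes explicit what the paper leaves implicit.
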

\begin{proof}
The functor $E(-)$ commutes with finite colimits so that the statement follows
immediately from Proposition \ref{prop:universal}.
\end{proof}

\begin{rem}\label{rem:universal} 
	We may pullback a cocyclic $2$-Segal object $X^{\bullet}$ along the functor
	$\Lambda_{\infty} \to \Lambda$ to obtain a coparacyclic $2$-Segal object
	$\widetilde{X^{\bullet}}$. Given a framed graph $\Gamma$, we have 
	\[
		\widetilde{X}(\Gamma) \simeq X(\overline{\Gamma})
	\]
	where $\overline{\Gamma}$ is the ribbon graph underlying $\Gamma$. In particular, in the
	context of Proposition \ref{prop:spectrum}, we obtain  
	\[
		\widetilde{L_E}(\Gamma) \simeq E(S/M)
	\]
\end{rem}

\section{Differential graded categories}
\label{sec:dgcat}

\subsection{Morita equivalences}

We introduce some terminology for the derived Morita theory of differential graded categories and refer the reader to
\cite{tabuada:model,toen:morita} for more detailed treatments.
Let $k$ be a commutative ring, and let $\dgcat$ be the category of small $k$-linear differential
$\ZZ$-graded categories. Recall that a functor $f: A \to B$ is called a {\em quasi-equivalence} if
\begin{enumerate}
	\item the functor $H^0(f): H^0(A) \to H^0(B)$ of homotopy categories is an equivalence of
		categories,
	\item for every pair of objects $(x,y)$ in $A$, the morphism $f: \Hom_{A}(x,y) \to
		\Hom_{B}(f(x),f(y))$ of complexes is a quasi-isomorphism.
\end{enumerate}
We denote by $\Lqe$ the $\infty$-category obtained by localizing $\dgcat$
along quasi-equivalences (\cite[1.3.4.1]{lurie:algebra}). The collection of quasi-equivalences can be supplemented to a
combinatorial model structure on $\dgcat$ which facilitates calculations in $\Lqe$

Given dg categories $A$,$B$, we denote the dg category of enriched functors from $A$ to $B$ by
$\UHom(A,B)$. We denote by $\Mod_k$ the dg category of unbounded complexes of $k$-modules, and
further, by $\Mod_A$ the dg category $\UHom(A^{\op}, \Mod_k)$. We equip $\Mod_A$ with the projective
model structure and denote by $\Perf_A \subset \Mod_A$
the full dg category spanned by those objects $x$ such that
\begin{enumerate}
	\item $x$ is cofibrant,
	\item the image of $x$ in $H^0(\Mod_A)$ is compact, i.e., $\Hom(x,-)$ commutes with coproducts. 
\end{enumerate}
Given a dg functor $f: A \to B$, we have a Quillen adjunction 
\[
	f_!: \Mod_A \lra \Mod_B : f^*
\]
and obtain an induced functor
\begin{equation}\label{eq:perf}
	f_!: \Perf_A \lra \Perf_B.
\end{equation}
The functor $f: A \to B$ is called a {\em Morita equivalence} if the induced functor \eqref{eq:perf}
is a quasi-equivalence. We denote by $\Lmo$ the $\infty$-category obtained by localizing $\dgcat$
along Morita equivalences. We have an adjunction
\begin{equation}\label{eq:lmo}
		l: \Lqe \longleftrightarrow \Lmo: i
\end{equation}
where $i$ is fully-faithful so that $l$ is a localization functor. 

Let $\dgcatt$ denote the category of small $\k$-linear differential $\Zt$-graded categories. All of
the above theory can be translated mutatis mutandis via the adjunction
\[
	P: \dgcat \longleftrightarrow \dgcatt: Q
\]
which is a Quillen adjunction with respect to an adaptation of the quasi-equivalence model structure
on $\dgcatt$. The periodization functor $P$ associates to a differential $\ZZ$-graded category the
differential $\Zt$-graded category with the same objects and $\Zt$-graded mapping complexes obtained by
summing over all even (resp. odd) terms of the $\ZZ$-graded mapping complexes. We will refer to the $\Zt$-graded analogues 
of the above constructions via the superscript $(2)$.

\begin{rem}\label{rem:colimits} Note that, due to the adjunction \eqref{eq:lmo}, the functor $l$ commutes with colimits
	so that we may compute colimits in the category $\Lmo$ as colimits in $\Lqe$. The latter
	category is equipped with the quasi-equivalence model structure so that, by
	\cite{lurie:htt}, we can compute colimits as homotopy colimits with respect to this model
	structure. The analogous statement holds for the $\Zt$-graded variants.
\end{rem}

\subsection{Exact sequences of dg categories}
\label{sec:exact}

A morphism in $\Lmo$ is called quasi-fully faithful if it is equivalent to the image of a quasi-fully 
faithful morphism under the localization functor $\N(\dgcat) \to \Lmo$.
A pushout square 
\[
	\begin{tikzcd}
		S \arrow[hookrightarrow]{r}{g} \arrow{d} & T \arrow[two heads]{d}{f} \\
	0 \arrow{r} & U 
	\end{tikzcd}
\]
in $\Lmo$ with $g$ quasi-fully faithful is called an {\em exact sequence}. The following technical
statement will be used below:

\begin{lem}\label{lem:qffpushout} Quasi-fully faithful morphisms are stable under pushouts in $\Lmo$.
\end{lem}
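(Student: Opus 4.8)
The claim is that quasi-fully faithful morphisms in $\Lmo$ are stable under pushouts. The plan is to reduce the statement to a concrete model-categorical computation and then invoke a recognition criterion for quasi-fully faithfulness in terms of the behavior of the induced functors on module categories. The key external input is the well-known description of quasi-fully faithful dg functors $S \to T$: such a functor induces a semiorthogonal decomposition, or at least a localization sequence, on $\Perf_T$, and dually the restriction functor $\Mod_T \to \Mod_S$ is a localization with fully faithful right adjoint. This is the interpretation I would make precise, following To\"en and Tabuada. Concretely, I would recall that $g\colon S \hookrightarrow T$ is quasi-fully faithful if and only if the left Kan extension $g_!\colon \Mod_S \to \Mod_T$ is fully faithful; by \cite{toen:morita} the thick subcategory of $\Perf_T$ generated by the image of $S$ is then equivalent to $\Perf_S$.

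\textbf{Main steps.} First I would set up the pushout square in $\Lmo$ with $g\colon S \hookrightarrow T$ quasi-fully faithful and a chosen cobase-change map $S \to S'$, producing the pushout $T' = T \amalg_S S'$ with comparison map $g'\colon S' \to T'$. By Remark \ref{rem:colimits} this pushout may be computed as a homotopy pushout in $\dgcat$ with the quasi-equivalence model structure, and then passed to $\Lmo$; so I may assume $g$ is a cofibration of dg categories (replacing it up to quasi-equivalence if necessary) and $T' $ is the strict pushout. Second, I would analyze module categories: applying $\Mod_{(-)}$ turns the pushout of dg categories into a (homotopy) pullback of the presentable $\infty$-categories $\Mod_{(-)}$ along the restriction functors, because $\Mod_{(-)}$ sends colimits of dg categories to limits of module categories (module categories glue). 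Concretely $\Mod_{T'} \simeq \Mod_T \times_{\Mod_S} \Mod_{S'}$. Third, I would use the characterization: $g'$ is quasi-fully faithful iff $g'_!\colon \Mod_{S'} \to \Mod_{T'}$ is fully faithful, equivalently iff the unit $\id \to (g')^* g'_!$ is an equivalence. Using the pullback description of $\Mod_{T'}$ and the fact that $g_!$ is fully faithful (the hypothesis on $g$), I would check the unit condition componentwise: a module over $T'$ is a triple $(\text{$T$-module}, \text{$S'$-module}, \text{gluing over }S)$, and $g'_!$ of an $S'$-module $N$ is the triple $(g_! (\text{restriction of }N\text{ to }S), N, \text{canonical gluing})$. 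The fully-faithfulness of $g'_!$ then follows from that of $g_!$ together with the fact that the restriction $S' \to S$ is a retract-style base change. Finally I would translate back: full faithfulness of $g'_!$ on all of $\Mod$ restricts to the statement that $\Perf_{S'} \to \Perf_{T'}$ is quasi-fully faithful, which is the definition of $g'$ being quasi-fully faithful in $\Lmo$, and hence conclude.

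\textbf{Alternative, more elementary route.} If the module-gluing argument feels too heavy, an equivalent plan is purely model-categorical: choose an explicit cofibrant model of $g\colon S \hookrightarrow T$ as an inclusion of a full dg subcategory (possible since $g$ is quasi-fully faithful, up to quasi-equivalence), and then realize the homotopy pushout $T' = T \amalg_S S'$ via a functorial cofibrant replacement. Because $g$ is a full subcategory inclusion, the pushout $T'$ again contains $S'$ as a dg subcategory whose inclusion $g'$ is \emph{strictly} full on the nose; a fully-faithful-on-objects inclusion of dg categories is automatically quasi-fully faithful, so $g'$ is quasi-fully faithful. The only thing to check here is that the strict pushout with $g$ a cofibration already computes the homotopy pushout and that fullness is preserved by the pushout, which is a direct verification using that $\dgcat$ with the quasi-equivalence model structure is left proper.

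\textbf{Main obstacle.} The delicate point is establishing that $\Mod_{(-)}$, or $\Perf_{(-)}$, converts the homotopy pushout of dg categories into the corresponding homotopy pullback of module $\infty$-categories in a way that identifies the essential image of $g'_!$; equivalently, in the elementary route, that the strict pushout along a full-subcategory cofibration is already a homotopy pushout and keeps the relevant inclusion full. Either way the heart of the matter is controlling how the gluing datum over $S$ interacts with the fully-faithful functor $g_!$; once that is pinned down, the conclusion is formal. I expect the module-theoretic formulation to be the cleanest, citing \cite{toen:morita, tabuada:model} for the recognition criterion and for the behavior of $\Mod_{(-)}$ under colimits.
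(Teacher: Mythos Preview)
Your main approach (via module categories) is sound and constitutes a genuinely different route from the paper's proof. The paper argues entirely inside $\dgcat$ with the Tabuada model structure: it reduces to the case where the cobase-change map $f\colon S\to S'$ is a relative $I$-cell complex, then to a single pushout of a generating cofibration, and finally verifies quasi-full faithfulness by an explicit description of the morphism complexes in the pushout together with a filtration/spectral-sequence argument. Your route instead passes to presentable module $\infty$-categories, uses that $A\mapsto \Mod_A$ sends pushouts in $\Lmo$ to pullbacks along the restriction functors, and checks that the left adjoint to the projection $\Mod_T\times_{\Mod_S}\Mod_{S'}\to\Mod_{S'}$ is fully faithful whenever $g_!$ is. That check does go through: the left adjoint sends $N$ to $(g_! f^*N,\,N)$ with gluing the unit $f^*N\simeq g^*g_! f^*N$ (an equivalence since $g_!$ is fully faithful), and the unit of this new adjunction is the identity on $N$. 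So the conceptual argument works, at the cost of importing the colimit-to-limit statement for $\Mod_{(-)}$ from To\"en/Lurie. The paper's approach is more self-contained; yours is shorter once the external input is granted. One small cleanup: what you call ``restriction of $N$ to $S$'' should be written $f^*N$ (restriction along $S\to S'$), not along $g$.

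Your alternative ``elementary'' route, however, does not work as written. Two problems. First, a strictly full dg-subcategory inclusion is \emph{not} preserved by strict pushout: for $x,y\in S'$ the hom-complex in $T\amalg_S S'$ contains, beyond $S'(x,y)$, all the ``zig-zag'' contributions passing through $T$, so $S'\to T\amalg_S S'$ is not full on the nose; showing it is \emph{quasi}-fully faithful is exactly the content of the lemma and needs an argument (this is precisely where the paper's spectral-sequence computation enters). Second, even if you model $g$ as a full inclusion, that model need not be a cofibration in Tabuada's model structure, so you cannot simultaneously invoke ``strict pushout = homotopy pushout''; and you cannot fall back on left properness of the quasi-equivalence model structure, since that is not established for $\dgcat$. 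If you want a model-categorical proof, you are essentially forced into the paper's cell-by-cell analysis.
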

\begin{proof}
	The adjunction \eqref{eq:lmo} implies that the left adjoint $l: \Lqe \to \Lmo$ preserves
	colimits so that it suffices to prove the corresponding statement for $\Lqe$. To this end it
	suffices to show that quasi-fully faithful functors in the category $\dgcat$ are stable 
	under homotopy pushouts with respect to the quasi-equivalence model structure defined in
	\cite{tabuada:model}. Given
	a diagram 
	\begin{equation}\label{eq:pushoutdgcat}
			\xymatrix{
				S \ar[r]^g \ar[d]^f &  T\\
				S'
			}
	\end{equation}
	with $g$ quasi-fully faithful, we may assume that all objects are cofibrant, and $f$,$g$ are
	cofibrations so that the homotopy pushout is given by an ordinary pushout. Denoting by $I$
	the set of generating cofibrations of $\dgcat$, we may, by Quillen's small object argument,
	factor the morphism $f$ as
	\[
		S \overset{f_1}{\to} \widetilde{S'} \overset{f_2}{\to} S'
	\]
	where $f_1$ is a relative $I$-cell complex and $f_2$ is a trivial fibration. Forming
	pushouts we obtain a diagram
	\begin{equation}
			\xymatrix{
				S \ar[r]^g \ar[d]^{f_1} &  T \ar[d] \\
				\widetilde{S'} \ar[r]^{\widetilde{g}} \ar[d]^{f_2} & T'\ar[d]^{r} \\
				S' \ar[r]^{g'} & T''.
			}
	\end{equation}
	Since $\widetilde{g}$ is a cofibration, the bottom square is a homotopy pushout square and
	thus $r$ is a quasi-equivalence. It hence suffices to show that $\widetilde{g}$ is
	quasi-fullyfaithful so that we may assume that $f$ is a relative $I$-cell complex. Using that filtered colimits of
	complexes are homotopy colimits (\cite{toen-vaquie:moduli}), and hence preserve quasi-isomorphisms, we reduce to the
	case that $f$ is the pushout of a single generating cofibration from $I$. This leaves us with two cases:
	\begin{enumerate}
		\item $S'$ is obtained from $S$ by adjoining one object, and the pushout $T'$ of
			\eqref{eq:pushoutdgcat}
			is obtained from $T$ by adjoining one object. Clearly, the functor $S' \to T'$ is quasi-fully faithful.
		\item $S'$ is obtained from $S$ by freely adjoining a morphism $p: a \to b$ of some degree
			$n$ between objects $a,b$ of $S$ where $d(p)$ is a prescribed morphism $q$ of $S$. 
			The morphism complex between objects $x,y$ in $S'$ can be described
			explicitly as 
			\[
				S'(x,y)  = \bigoplus_{n \ge 0} S(x,a) \otimes k p  \otimes S(b,a)
				\otimes k p \otimes \dots \otimes S(b,y)
			\]
			where $n$ copies of $k p$ appear in the $n$th summand. The differential is 
			given by the Leibniz rule where, upon replacing $p$ by $d(p) = q$, we also
			compose with the neighboring morphisms so that the level is decreased from
			$n$ to $n-1$. The morphism complexes of the pushout $T'$ have a admit
			the analogous expression with $p$ replaced by $g(p)$. We have to show that,
			for every pair of objects, the morphism of
			complexes
			\[
				S'(x,y) \to T'(g(x),g(y))
			\]
			is a quasi-isomorphism. To this end, we filter both complexes by 
			the level $n$. On the associated graded complexes we have a
			quasi-isomorphism, since $g$ is quasi-fully faithful. The corresponding
			spectral sequence converges which yields the desired quasi-isomorphism.
	\end{enumerate}
\end{proof}

\begin{rem} The proof of the lemma works verbatim for $\dgcatt$ instead of $\dgcat$.
\end{rem}

\section{Topological Fukaya categories}

\subsection{$\Zt$-graded}
\label{sec:zt}

Let $k$ be a commutative ring, and let $R  = k[z]$ denote the polynomial ring with coefficients in
$k$, considered as a $\ZZ/(n+1)$-graded $k$-algebra with $|z| = 1$. A matrix factorization $(X, d_X)$ of $w =
z^{n+1}$ consists of
\begin{itemize}
	\item a pair $X^0$, $X^1$ of $\ZZ/(n+1)$-graded $R$-modules,
	\item a pair of homogeneous $R$-linear homomorphisms 
		\[
		\xymatrix{
		X^0 \ar@/_1ex/[r]_{d^0} & X^1 \ar@/_1ex/[l]_{d^1}
		}
		\]
		of degree $0$,
\end{itemize}
such that
\begin{itemize}
	\item $d^1 \circ d^0 = w \id_{X^0}$ and $d^0 \circ d^1 = w \id_{X^1}$.
\end{itemize}

\begin{exa}\label{exa:scalar} 
	For $i,j \in \ZZ/(n+1)$, $i \ne j$, we have a corresponding {\em scalar} matrix factorization
	$[i,j]$ defined as
	\[
		\xymatrix{
		k[z](i) \ar@/_1ex/[r]_{z^{j-i}} & k[z](j) \ar@/_1ex/[l]_{z^{i-j}}
	}
	\]
	where the exponents of $z$ are to be interpreted via their representatives in
	$\{1,2,\dots,n\}$. For $i = j$, we have two scalar matrix factorizations
	\[
		\xymatrix{
		k[z](i) \ar@/_1ex/[r]_{1} & k[z](i) \ar@/_1ex/[l]_{z^{n+1}}
	}
	\]
	and
	\[
		\xymatrix{
		k[z](i) \ar@/_1ex/[r]_{z^{n+1}} & k[z](i) \ar@/_1ex/[l]_{1}
	}
	\]
	which we denote by $[i,i]_r$ and $[i,i]_l$, respectively.
\end{exa}

Given matrix factorizations $X$,$Y$ of $w$, we form the $\Zt$-graded $k$-module
$\Hom^{\bullet}(X,Y)$ with
\begin{align*}
	\Hom^{0}(X.Y) & = \Hom_R(X^0,Y^0) \oplus \Hom_R(X^1,Y^1)\\
	\Hom^{1}(X.Y) & = \Hom_R(X^0,Y^1) \oplus \Hom_R(X^1,Y^0)
\end{align*}
where $\Hom_R$ denotes homogeneous $R$-linear homomorphisms of degree $0$.
It is readily verified that the formula
\[
	d(f) = d_Y \circ f - (-1)^{|f|} f \circ d_X
\]
defines a differential on $\Hom^{\bullet}(X,Y)$, i.e., $d^2 = 0$. Therefore, the
collection of all matrix factorizations of $w$ organizes into a differential $\Zt$-graded 
$k$-linear category which we denote by $\MF^{\ZZ/(n+1)}(k[z],z^{n+1})$.
We further define
\[
	\Ft^n \subset \MF^{\ZZ/(n+1)}(k[z],z^{n+1})
\]
to be the full dg subcategory spanned by the scalar matrix factorizations from Example
\ref{exa:scalar}.

\begin{thm}\label{thm:matrix} The association $n \mapsto \Ft^n$ extends to a cocyclic object
	\[
		\Ft: \N(\Lambda) \to \Lmot
	\]
	which is unital $2$-Segal.
\end{thm}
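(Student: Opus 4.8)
The plan is to construct the cocyclic structure on $\Ft$ explicitly and then verify the Segal conditions by direct inspection of the scalar matrix factorizations. The key conceptual input is that the generators $[i,j]$ of $\Ft^n$ are indexed by pairs $i,j \in \ZZ/(n+1)$, which one should think of as \emph{chords} in a polygon with $n+1$ cyclically labeled vertices, and the mapping complexes $\Hom^{\bullet}([i,j],[i',j'])$ should compute the expected ``intersection'' combinatorics of such chords. Thus the cocyclic object $\Ft$ will be, up to Morita equivalence, the one whose state sum on a ribbon graph reconstructs the topological Fukaya category as in \cite{dk:triangulated}.

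First I would make the functoriality precise. A morphism $\cm \to \cn$ in $\Lambda$ is (after the identifications of Remark~\ref{rem:bDelta} extended to the cyclic setting) a monotone cyclic map of the vertex sets; I want to associate to it a dg functor $\Ft^m \to \Ft^n$, which by the universal property of $\Perf$ it suffices to define on the generating scalar factorizations and then extend. On objects, a chord $[i,j]$ in the $m$-gon is sent to the chord spanned by the images of $i$ and $j$; when these images coincide one must choose between $[i,i]_r$ and $[i,i]_l$, and the cyclic (interstice) data dictates which — this is exactly where the two degenerate scalar factorizations of Example~\ref{exa:scalar} are needed, and getting this bookkeeping right is the first technical point. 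The key computation is that $\Ft^n$ is quasi-equivalent, after passing to $\Perf$, to the dg category $\Perf_{A_n}$ for a suitable zig-zag $A_\infty$ (or dg) quiver, so that $\Ft$ agrees with a known cocyclic object; alternatively one argues directly that the assignment is compatible with composition up to the coherent homotopies packaged by a functor out of $\N(\Lambda)$, using that the mapping complexes between scalar factorizations are formal (concentrated in degree $0$ with explicit bases given by monomials in $z$).

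Next I would verify the Segal conditions. By the definition of cocyclic $2$-Segal, it suffices to check that the underlying cosimplicial object is unital $2$-Segal, i.e.\ that the squares \eqref{eq:2segal} and \eqref{eq:unital} are pushouts in $\Lmot$. By Remark~\ref{rem:colimits} these pushouts may be computed as homotopy pushouts of dg categories with respect to the quasi-equivalence model structure, and by Lemma~\ref{lem:qffpushout} the relevant inclusions are quasi-fully faithful so the pushouts are again ``gluing'' constructions on generators. Concretely: the polygonal subdivision $0 \le i < j \le n$ exhibits the $n$-gon's chord category as glued from the sub-$(i,j,\dots,n)$-gon and the sub-$(i,i+1,\dots,j)$-gon along the single chord $[i,j]$; on the level of generating objects and their Hom-complexes this is a tautology since a chord of the big polygon either lies entirely in one piece or crosses the diagonal $[i,j]$, in which case its Hom-complex with other chords decomposes accordingly. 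The unitality square \eqref{eq:unital} is the degenerate analogue, reflecting that a $2$-gon contributes a trivial (contractible, or rather $\Perf$-trivial in the right sense) piece, governed again by the degenerate scalar factorizations. Alternatively, and more slickly, one can first establish that $\Ft$ is $1$-Segal — the square \eqref{eq:1segal} of polygon inclusions is sent to a pushout — and then invoke Proposition~\ref{prop:12segal}; I would check which of the two routes is cleaner once the Hom-complex computation is in hand.

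The main obstacle I anticipate is \emph{not} the Segal condition itself, which is combinatorially transparent, but rather establishing that the assignment $n \mapsto \Ft^n$ genuinely assembles into a functor out of the nerve $\N(\Lambda)$ — i.e.\ producing the coherent system of higher homotopies rather than just a cocyclic object in the homotopy category. The honest way to do this is to realize $\Ft^\bullet$ as the image under a strict (point-set) construction that is already functorial on $\Lambda$: matrix factorizations of $z^{n+1}$ over the $\ZZ/(n+1)$-graded ring $k[z]$ do depend on $n$ through the grading, so some care is needed to define the transition functors strictly (e.g.\ via base change along $k[z]/(z^{m+1}) \to k[z]/(z^{n+1})$-type maps, or by working with a model where the ambient category is fixed and the potential varies). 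Once a strictly functorial model on $\N(\Lambda)$ is in place, localizing to $\Lmot$ is automatic, and the remaining content — the pushout squares — reduces to the elementary generator-level checks sketched above together with the homotopy-pushout description from Remark~\ref{rem:colimits} and Lemma~\ref{lem:qffpushout}.
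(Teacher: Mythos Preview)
The paper does not actually prove this theorem; its entire proof is the citation \cite{dk:triangulated}. So your proposal is far more detailed than what appears here, and in outline it matches the argument of that reference. In particular, the paper's Remark~\ref{rem:an} (immediately after the theorem) confirms your instinct about the coherence issue: the whole reason for working with $\Ft^n$ rather than the simpler $A^n$ is that $n \mapsto \Ft^n$ is a \emph{strict} functor $\Lambda \to \dgcatt$, so passing to $\N(\Lambda) \to \Lmot$ is automatic. Your anticipated ``main obstacle'' is therefore already dissolved by the choice of model, and no ad hoc base-change construction is needed.

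One genuine error to flag: your proposed alternative route via the $1$-Segal condition cannot work, because $\Ft$ is \emph{not} $1$-Segal. Taking $k=1$, $n=2$ in the square \eqref{eq:1segal} would force
\[
\Ft^1 \amalg_{\Ft^0} \Ft^1 \;\simeq\; \Ft^2
\]
in $\Lmot$; since $\Ft^0 \simeq 0$ and $\Ft^1 \simeq k$ (Remark~\ref{rem:an}) this reads $k \amalg k \simeq A^2$, which is false --- the left side has no nonzero maps between its two generators, while $A^2$ does. This is precisely why the theorem only claims the $2$-Segal property, and why Proposition~\ref{prop:exact} later needs a localizing $H$ to make $H\Ff$ become $1$-Segal. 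So you must take the direct route through the subdivision pushouts \eqref{eq:2segal} and \eqref{eq:unital}; your chord-gluing picture for those is correct and is essentially what the cited reference carries out.
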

\begin{proof}\cite{dk:triangulated}
\end{proof}

\begin{rem}\label{rem:an} Let $A^n$ denote the $k$-linear envelope of the category associated to the linearly
	ordered set $\{1,2,\dots,n\}$, considered as a differential $\Zt$-graded category
	concentrated in degree $0$. There is a dg functor
	\[
		g: A^n \to \Ft^n, i \mapsto [0,i]
	\]
	which maps the generating morphism $i \to j$ to the closed morphism of matrix factorizations 
	\[
		\xymatrix{
			k[z] \ar@/_1ex/[r]_{z^i}\ar[d]_{1} & k[z](i) \ar[d]^{z^{j-i}}\ar@/_1ex/[l]_{z^{n+1-i}}\\
			k[z] \ar@/_1ex/[r]_{z^j} & k[z](j) \ar@/_1ex/[l]_{z^{n+1-j}}.
		}
	\]
	An explicit calculation shows:
	\begin{itemize}
		\item The functor $g$ is quasi-fully faithful. 
		\item The object $[i,j]$ is a cone over the above morphism $[0,i] \to [0,j]$.
		\item The objects $[i,i]_l$ and $[i,i]_r$ are zero objects.
	\end{itemize}
	These observations imply that the functor $g$ is a Morita equivalence. The reason for using
	$\Ft^n$ instead of the much simpler dg category $A^n$ is the following: the cocyclic object in Theorem
	\ref{thm:matrix} is difficult to describe in terms of $A^n$ while the association $n \mapsto
	\Ft^n$ defines a strict functor $\Lambda \to \dgcatt$ which induces $\Ft^{\bullet}$ by passing
	to the Morita localization.
\end{rem}

The state sum formalism of Section \ref{sec:statesum} yields a functor
\[
	\rho_{\Ft}: \N(\Rib) \lra \Lmot,\; \Gamma \mapsto \Ft(\Gamma).
\]
The state sum $\Ft(\Gamma)$ of $\Ft$ on a ribbon graph $\Gamma$ is called 
the ($\Zt$-graded) {\em topological Fukaya category} of $\Gamma$.

\begin{ex}\label{exa:kt}
	Consider the ribbon graph $\Gamma$ given by 
	\[
		\tikz[scale=0.7,baseline=(current bounding box.center) ]{
			  \filldraw[black] (0,0) circle(1.5pt);
			  \draw (0,.4) circle(0.4);
			  \draw (0,0) -- (0,-.5); 
		}
	\]
	The corresponding topological Fukaya category $\Ft(\Gamma)$ 
	can, by Proposition \ref{prop:ribpushouts}(1), Remark \ref{rem:colimits}, and Remark \ref{rem:an}, 
	be computed as the homotopy pushout of the diagram 
	\[
		\xymatrix{
			A^0 \amalg A^0 \ar[r]\ar[d] & A^0 \\
			A^1  & 
		}
	\]
	with respect to the quasi-equivalence model structure on $\dgcatt$. All objects
	are cofibrant and the vertical functor is a cofibration so that the homotopy pushout can be
	computed as an ordinary pushout. 
	Therefore, we obtain 
	\[
		\Ft(\Gamma) \simeq k[t] 
	\]
	the $k$-linear category with one object and endomorphism ring $k[t]$, considered as a
	differential $\Zt$-graded category with zero differential. We can therefore interpret
	$\Ft(\Gamma)$ as the $\Zt$-folding of the bounded derived dg category of coherent sheaves on the
	affine line $\AA^1_k$ over $k$.
\end{ex}

\begin{ex}\label{ex:kronecker}
	Consider the ribbon graph $\Gamma$ given by 
	\[
		\tikz[yshift=4ex,scale=0.7]{
			  \filldraw[black] (270:.4) circle(1.5pt);
			  \draw (0,0) circle(0.4);
			  \draw (270:.4) -- (270:.9);
			  \draw (90:.1) -- (270:.4);
		}
	\]
	We replace $\Gamma$ by the ribbon graph $\Gamma'$
	\[
		\tikz[yshift=4ex, rotate=90, scale=0.7]{
			\draw (-150:.4) arc (-150:150:.4);
			\draw (-150:.4) -- (150:.4);
			\draw (-150:.4) -- (-100:.1);
			\draw (150:.4) -- (155:.7);
			\filldraw[black] (-150:.4) circle(1.5pt);
			\filldraw[black] (150:.4) circle(1.5pt);
		}
	\]
	which, by Proposition \ref{thm:funcontract}, has an equivalent topological Fukaya category.
	Using Proposition 4.2.3.8 in \cite{lurie:htt}, Remark \ref{rem:colimits}, and Remark
	\ref{rem:an}, we can obtain $\Ft(\Gamma')$ as the homotopy pushout of 
	\[
		\xymatrix{
			A^0 \amalg A^0 \ar[r]\ar[d] & A^1 \\
			A^1  & 
		}
	\]
	with respect to the quasi-equivalence model structure on $\dgcatt$. Since all objects in
	this diagram are cofibrant and all functors cofibrations, we can form the ordinary pushout
	to obtain a description of $\Ft(\Gamma)$ as the $k$-linear category generated by the Kronecker
	quiver with two vertices $0$ and $1$ and two edges from $0$ to $1$.
	We can therefore interpret
	$\Ft(\Gamma)$ as the $\Zt$-folding of the bounded derived dg category of coherent sheaves on the
	projective line $\mathbb P^1_k$ over $k$.
\end{ex}

\subsection{$\ZZ$-graded}

We discuss a $\ZZ$-graded variant of the topological Fukaya category which can be associated to any
framed stable marked surface and provides a lift of the $\Zt$-graded category associated to the
underlying oriented surface. It can be obtained via a minor modification of the constructions in Section
\ref{sec:zt}: we introduce a differential $\ZZ$-graded category $\MF^{\ZZ}(k[z], z^{n+1})$ of
$\ZZ$-graded matrix factorizations.

Let $k$ be a commutative ring, and let $R = k[z]$ denote the polynomial ring, considered as a
$\ZZ$-graded ring with $|z| = 1$. A $\ZZ$-graded matrix factorization $(X,d)$ of $w = z^{n+1}$
consists of
\begin{itemize}
	\item a pair $X^0$, $X^1$ of $\ZZ$-graded $R$-modules,
	\item a pair of homogeneous $R$-linear homomorphisms 
		\[
		\xymatrix{
		X^0 \ar@/_1ex/[r]_{d^0} & X^1 \ar@/_1ex/[l]_{d^1}
		}
		\]
		where $|d^0| = 0$ and $|d^1| = n+1$.
\end{itemize}
such that
\begin{itemize}
	\item $d^1 \circ d^0 = w \id_{X^0}$ and $d^0 \circ d^1 = w \id_{X^1}$.
\end{itemize}

\begin{exa}\label{exa:scalarZ} 
	For $i, j \in \ZZ$, $0 \le j - i \le n+1$, we have a corresponding {\em scalar} matrix factorization
	$[i,j]$ defined as
	\[
		\xymatrix{
		k[z](i) \ar@/_1ex/[r]_{z^{j-i}} & k[z](j) \ar@/_1ex/[l]_{z^{i-j+n+1}}.
	}
	\]
\end{exa}

Given matrix factorizations $X,Y$, we define a $\ZZ$-graded mapping complex $\Hom^{\bullet}(X,Y)$ as
follows. We extend $X$ to a $\ZZ$-sequence 
\[
	\dots \overset{d}{\lra} \widetilde{X}^{i-1} \overset{d}{\lra} \widetilde{X}^i  \overset{d}{\lra}  \widetilde{X}^{i+1} \lra \dots
\]
setting 
\[
	\widetilde{X}^i :=  X^{\overline{i}}( (n+1) \left \lfloor{\frac{i}{2}}\right \rfloor)
\]
where $\overline{i}$ denotes the residue of $i$ modulo $2$. In particular, we have $\widetilde{X}^{i+2} =
\widetilde{X}^i(n+1)$. The morphisms $d$ in the sequence
$\widetilde{X}$ are homogeneous of degree $0$ and satisfy $d^2 = w$. Given matrix factorizations
$X,Y$, we define the $\ZZ$-graded complex $\Hom^{\bullet}(X,Y)$ by setting
\[
	\Hom^j(X,Y) = \{(f_i)_{i \in \ZZ} \; |\; f_{i+2} = f_i(n+1)\} \subset \prod_{i \in \ZZ} \Hom_R(\widetilde{X}^i , \widetilde{X}^{i+j}) 
\]
equipped with the differential given by the formula
\[
	d(f) = d_{\widetilde{Y}} f - (-1)^{|f|} f \circ d_{\widetilde{X}}.
\]
Therefore, the collection of all $\ZZ$-graded matrix factorixations of $w$ organizes into a
differential $\ZZ$-graded 
$k$-linear category which we denote by $\MF^{\ZZ}(k[z],z^{n+1})$.
In analogy with the $\Zt$-graded case, we further define
\[
	\Ff^n \subset \MF^{\ZZ}(k[z],z^{n+1})
\]
to be the full dg subcategory spanned by the scalar matrix factorizations from Example
\ref{exa:scalarZ}.

\begin{thm}\label{thm:matrixZ} The association $n \mapsto \Ff^n$ extends to a coparacyclic object
	\[
		\Ff: \N(\Lambda_{\infty}) \to \Lmo
	\]
	which is unital $2$-Segal.
\end{thm}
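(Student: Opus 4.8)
The plan is to mimic the proof of Theorem~\ref{thm:matrix} for the cyclic case, as carried out in \cite{dk:triangulated}, adapting each step to the paracyclic setting. First I would make precise the claim that $n \mapsto \Ff^n$ is functorial: one checks directly that the assignment $\co{n} \mapsto \MF^{\ZZ}(k[z],z^{n+1})$ extends to a strict functor from (the skeleton) $\Lambda_{\infty}$ to $\dgcat$, where a morphism of paracyclically ordered sets is sent to the corresponding functor on matrix factorizations given by twisting and multiplying by appropriate powers of $z$, the point being that the choice of $\ZZ$-torsor lift in a paracyclic morphism is exactly the extra information needed to pin down the integer shifts $(n+1)\lfloor i/2 \rfloor$ unambiguously (in the cyclic case only the residues mod $n+1$ were available, hence the $\Zt$-grading). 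One then checks that the scalar matrix factorizations from Example~\ref{exa:scalarZ} span a subcategory $\Ff^n$ preserved by these functors, producing a strict functor $\Lambda_{\infty} \to \dgcat$; postcomposing with the Morita localization $\N(\dgcat) \to \Lmo$ yields the coparacyclic object $\Ff$.

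The second part is the unital $2$-Segal property. Since by definition (Definition after Example~\ref{ex:cyclic-pushout}, applied in the paracyclic setting, cf. the remark that ``all statements of Section~\ref{sec:cyclically} hold mutatis mutandis for $\bLambda_{\infty}$'') this only refers to the underlying cosimplicial object, and since the forgetful functor $\bLambda_{\infty} \to \bDelta$ is compatible with the forgetful functor $\bLambda \to \bDelta$, the underlying cosimplicial object of $\Ff$ is obtained from that of $\Ft$ by composing the functors $\Ff^n \to \Ft^n$. So I would first compare $\Ff^n$ with $\Ft^n$: the periodization functor $P: \dgcat \to \dgcatt$ sends $\MF^{\ZZ}(k[z],z^{n+1})$ to $\MF^{\ZZ/(n+1)}(k[z],z^{n+1})$ (folding the $\ZZ$-grading, which is exactly what the $\widetilde{X}^{i+2} = \widetilde{X}^i(n+1)$ identification achieves) and carries scalar matrix factorizations to scalar matrix factorizations, giving $P(\Ff^n) \simeq \Ft^n$ compatibly with the cosimplicial structure. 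The $2$-Segal condition for $\Ff$ — which asks that certain squares of shape \eqref{eq:2segal} and \eqref{eq:unital} indexed by polygonal subdivisions become pushouts in $\Lmo$ — then follows from the corresponding statement for $\Ft$ in $\Lmot$ provided we know these squares are already pushouts at the $\ZZ$-graded level, i.e. before periodization. Concretely I would mimic Remark~\ref{rem:an}: introduce the $\ZZ$-graded analogue $\tilde A^n$ (the $k$-linear envelope of $\{1,\dots,n\}$, in degree $0$), exhibit a quasi-fully faithful Morita equivalence $\tilde A^n \to \Ff^n$, $i \mapsto [0,i]$, with $[i,j]$ a cone on $[0,i]\to[0,j]$, and then reduce the pushout statements to explicit homotopy pushouts of the $\tilde A^n$'s exactly as in Examples~\ref{exa:kt} and~\ref{ex:kronecker}; because $l: \Lqe \to \Lmo$ preserves colimits (Remark~\ref{rem:colimits}) these can be computed as honest pushouts of cofibrant dg categories.

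The main obstacle I expect is bookkeeping of the $\ZZ$-grading shifts under paracyclic morphisms — verifying that the strict functoriality on $\Lambda_{\infty}$ actually holds, i.e. that composing the explicit twist-and-multiply-by-$z^?$ functors associated to two paracyclic morphisms gives the functor associated to the composite, with all the floor-function shifts $(n+1)\lfloor i/2\rfloor$ matching up. This is where the torsor data is genuinely used and where a sign or shift error would be easy to make; everything else is either a direct transcription of \cite{dk:triangulated} or a routine cofibrancy/spectral-sequence argument of the kind already deployed in Lemma~\ref{lem:qffpushout} and Remark~\ref{rem:an}. A secondary, smaller point is to confirm that the periodization functor $P$ indeed intertwines the two cocyclic objects on the nose (not merely up to equivalence termwise), so that the $2$-Segal property of $\Ft$ can be leveraged; but since $P$ is a left Quillen functor it commutes with the homotopy pushouts in question, and this causes no real trouble.
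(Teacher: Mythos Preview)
The paper does not actually prove this theorem; the entire proof is the citation ``\cite{lurie:rotation,dk:crossed}''. Your sketch is broadly what those references carry out, so in substance you are on the right track: construct a strict functor $\Lambda_\infty \to \dgcat$ on scalar matrix factorizations using the $\ZZ$-torsor data to pin down integer shifts, then verify the $2$-Segal pushout squares by an explicit Morita comparison with the $\ZZ$-graded $A^n$-quivers (your $\tilde A^n$, the paper's $A^n_{\ZZ}$ of Remark~\ref{rem:anZ}).

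There is, however, a genuine logical slip in your second paragraph. You propose to deduce the $2$-Segal property for $\Ff$ from that of $\Ft$ via the relation $P(\Ff^n) \simeq \Ft^n$. But $P$ is a \emph{left} adjoint: it preserves pushouts, it does not reflect them. Knowing that a square becomes a pushout in $\Lmot$ after applying $P$ tells you nothing about whether it was a pushout in $\Lmo$ to begin with. (There is also no ``forgetful functor $\bLambda_\infty \to \bDelta$''; one restricts along $\Delta \hookrightarrow \Lambda_\infty$.) Fortunately your proposal is self-correcting: the explicit computation with $\tilde A^n$ that you describe next is exactly the argument that works, and it does not rely on the $\Zt$-graded case at all---it is simply the $\ZZ$-graded rerun of the argument in \cite{dk:triangulated}. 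So drop the attempted reduction to $\Ft$ and run the $\tilde A^n$ argument directly; that, together with the strict-functoriality check you correctly identify as the delicate point, is the proof.
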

\begin{proof}\cite{lurie:rotation,dk:crossed}
\end{proof}

\begin{rem} \label{rem:anZ} The statement of Remark \ref{rem:an} has a $\ZZ$-graded analog: For ever
	$n \ge 0$, the association $i \mapsto [0,i]$ defines a Morita equivalence of $\ZZ$-graded categories
	\[
		A^n_{\ZZ} \lra \Ff^n
	\]
	where $A^n_{\ZZ}$ denotes the $\ZZ$-graded variant of $A^n$.
\end{rem}

We obtain a state sum functor
\[
	\rho_{\Ff}: \N(\Rib_{\infty}) \lra \Lmo,\; \Gamma \mapsto \Ff(\Gamma).
\]
The state sum $\Ff(\Gamma)$ of $\Ff$ on a framed graph $\Gamma$ is called 
the ($\ZZ$-graded) {\em topological Fukaya category} of $\Gamma$.

\begin{rem} It is immediate from the definitions that we have a commutative square
	\[
		\xymatrix{
			\N(\Lambda_{\infty}) \ar[r]^{\Ff}\ar[d] & \Lmo \ar[d]^P\\
			\N(\Lambda) \ar[r]^{\Ft} & \Lmot
		}
	\]
	in $\Cat_{\infty}$ where the left vertical arrow is the natural forgetful functor.
	Since the periodization functor $P$ commutes with colimits it follows that, for a framed graph $\Gamma$, we have
	\[
		P(\Ff(\Gamma)) \simeq \Ft(\bar{\Gamma})
	\]
	where $\bar{\Gamma}$ denotes the ribbon graph underlying $\Gamma$. In other words, $\Ff(\Gamma)$ 
	provides a lift of the differential $\Zt$-graded category $\Ft(\bar{\Gamma})$ to a differential
	$\ZZ$-graded category.
\end{rem}

\begin{ex}\label{exa:ktZ}
	Consider the ribbon graph $\Gamma$ given by 
	\[
		\tikz[scale=0.7,baseline=(current bounding box.center) ]{
			  \filldraw[black] (0,0) circle(1.5pt);
			  \draw (0,.4) circle(0.4);
			  \draw (0,0) -- (0,-.5); 
		}
	\]
	equipped with framing corresponding to the winding number $r$ 
	around the loop. An analogous calculation to Example \ref{exa:kt} yields 
	\[
		\Ff(\Gamma) \simeq k[t] 
	\]
	the $k$-linear category with one object and endomorphism ring $k[t]$, $|t| = 2r$, considered as a
	differential $\ZZ$-graded category with zero differential. The endomorphism dga of the
	$k[t]$-module $k \cong k[t]/(t)$ is given by the graded ring $k[\delta]$, $\delta^2 =0$,
	$|\delta| = 1 - 2r$.
\end{ex}

\begin{ex}\label{ex:kroneckerZ}
	Consider the ribbon graph $\Gamma$
	\[
		\tikz[yshift=3ex, rotate=90,scale=0.7]{
			\draw (-150:.4) arc (-150:150:.4);
			\draw (-150:.4) -- (150:.4);
			\draw (-150:.4) -- (-100:.1);
			\draw (150:.4) -- (155:.7);
			\filldraw[black] (-150:.4) circle(1.5pt);
			\filldraw[black] (150:.4) circle(1.5pt);
		} 
	\]
	with framing corresponding to the winding number $r$.
	An analogous calculation to Example \ref{ex:kronecker} gives a description of
	$\Ff(\Gamma)$ as the $k$-linear category generated by the {\em graded} Kronecker
	quiver with two vertices $0$ and $1$ and two edges from $0$ to $1$ where the edges have
	degree $0$ and $2r$, respectively. Therefore, $\Ff(\Gamma)$ can be interpreted as a
	twisted version of the derived dg category of ${\mathbb P}^1_k$ (cf. \cite{seidel:exact} for an
	appearance of this quiver in another Fukaya-categorical context). The objects of
	$\Perf_{\Ff(\Gamma)}$ given by the cones of the two edges of the quiver have endomorphism 
	algebras given by $k[\delta]$, $\delta^2 = 0$ where $\delta$ has degree $1- 2r$ and $1 + 2r$,
	respectively.
\end{ex}

\section{Localization and Mayer-Vietoris}

\subsection{Localization for topological Fukaya categories}
\label{sec:thomason}

We construct localization sequences for topological Fukaya categories which are analogous to the proto-localization sequences of
Thomason-Trobaugh for derived categories of schemes. A refined analysis of certain such sequences
will feature in our proof of the Mayer-Vietoris theorem in Section \ref{sec:motivic}. We focus on
the $\ZZ$-graded case, the $\Zt$-graded case can be treated mutatis mutandis.

Let $\Gamma$ be a graph. A subgraph $\Gamma' \subset \Gamma$ is called {\em open} if, for every
vertex $v \in \Gamma'$, the graph $\Gamma'$ contains all halfedges incident to $v$ in $\Gamma$. The
{\em complement} $\Gamma'' = \Gamma \setminus \Gamma'$ of an open graph $\Gamma' \subset \Gamma$ is the subgraph
of $\Gamma$ with set of vertices $V \setminus V'$ and set of halfedges $H \setminus H'$. Note that
the complement of an open graph is open. Given an open subgraph $\Gamma' \subset \Gamma$, we
define the {\em closure} $\overline{\Gamma'}$ to be the graph which is obtained from $\Gamma'$ by adding, for every external
half-edge $h$ of $\Gamma'$ which becomes internal in $\Gamma$, a new half-edge $\tau(h)$ and vertex
$v$ which is declared incident to $\tau(h)$. We further define the {\em retract} $\underline{\Gamma'}$ of
$\Gamma'$ to be the graph obtained from $\Gamma'$ by removing all half-edges which become internal
in $\Gamma$.

\begin{exa}
	Consider the graph $\Gamma$ depicted by 
	\[
		\tikz[yshift=7ex,scale=0.7]{
			  \filldraw[black] (270:.4) circle(1.5pt);
			  \filldraw[black] (270:.9) circle(1.5pt);
			  \filldraw[black] (90:.1) circle(1.5pt);
			  \draw (0,0) circle(0.4);
			  \draw (270:.4) -- (270:.9);
			  \draw (90:.1) -- (270:.4);
			  \draw (0,-1.3) circle(0.4);
		  }.
	\]
	It contains the open subgraph $\Gamma'$ given by 
	\[
		\tikz[yshift=5ex,scale=0.7]{
			  \filldraw[black] (270:.4) circle(1.5pt);
			  \filldraw[black] (90:.1) circle(1.5pt);
			  \draw (0,0) circle(0.4);
			  \draw (270:.4) -- (270:.9);
			  \draw (90:.1) -- (270:.4);
		  }
	\]
	with closure $\overline{\Gamma'}$
	\[
		\tikz[yshift=5ex,scale=0.7]{
			  \filldraw[black] (270:.4) circle(1.5pt);
			  \filldraw[black] (90:.1) circle(1.5pt);
			  \filldraw[black] (270:.9) circle(1.5pt);
			  \draw (0,0) circle(0.4);
			  \draw (270:.4) -- (270:.9);
			  \draw (90:.1) -- (270:.4);
		  },
	\]
	retract $\underline{\Gamma'}$
	\[
		\tikz[yshift=3ex,scale=0.7]{
			  \filldraw[black] (270:.4) circle(1.5pt);
			  \filldraw[black] (90:.1) circle(1.5pt);
			  \draw (0,0) circle(0.4);
			  \draw (90:.1) -- (270:.4);
		  },
	\]
	and complement $\Gamma \setminus \Gamma'$
	\[
		\tikz[yshift=7ex,scale=0.7]{
			  \filldraw[black] (270:.9) circle(1.5pt);
			  \draw (270:.4) -- (270:.9);
			  \draw (0,-1.3) circle(0.4);
		  }.
	\]
\end{exa}

Note that if $\Gamma$ is a framed (resp. ribbon) graph then any open subgraph inherits a canonical
framing (resp. ribbon structure). We will need the following special case of a general descent statement
which can be proved with the same technique.

\begin{prop}\label{prop:descent} Let $\C$ be an $\infty$-category with finite colimits. Let $\Gamma$
	be a graph, and let $\Gamma' \subset \Gamma$ be an open subgraph of $\Gamma$ with complement
	$\Gamma''$.  Assume that $\Gamma$ carries a ribbon (resp. framed) structure and $X$ is a
	co(para)cyclic object in $\C$. Then there is a canonical pushout square
	\begin{equation}
			\xymatrix{
				X(\Xi) \ar[r]\ar[d] & X(\Gamma'') \ar[d]\\
				X(\Gamma') \ar[r] & X(\Gamma)
			}
	\end{equation}
	in $\C$ where $\Xi$ denotes the graph given by the disjoint union of copies of the corolla
	\[
		\tikz[baseline=(current bounding box.center),scale=0.7]{
			\filldraw[black] (-.2,0) circle(1.5pt);
			\draw (-.2,-.5) -- (-.2,0) -- (-.2,.5);
		}
	\]
	indexed by those half-edges in $\Gamma'$ which become internal in $\Gamma$, equipped with the
	(para)cyclic order induced from the corresponding edges of $\Gamma$.  
\end{prop}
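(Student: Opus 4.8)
The plan is to prove the cyclic case in detail; the framed case follows \emph{verbatim} after replacing $\bLambda$ by $\bLambda_{\infty}$ throughout, since all the constructions of Section~\ref{sec:cyclically} carry over. By Proposition~\ref{prop:pointwise} the state sum is the colimit $X(\Gamma)\simeq\colim_{\N(I(\Gamma)^{\op})}X\circ\delta_{\Gamma}$, and similarly for $\Gamma'$, $\Gamma''$, $\Xi$; all of these are finite colimits since $\Gamma$ is finite. The strategy is to exhibit the index $\infty$-category $\N(I(\Gamma)^{\op})$ as a pushout reflecting the decomposition of $\Gamma$ into $\Gamma'$, $\Gamma''$ and the ``interface'' recorded by $\Xi$, and then to invoke the compatibility of colimits with pushouts of diagram shapes.

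First I would analyse the incidence category. Its objects are $V\sqcup E$, and because morphisms of $I(\Gamma)$ go only from vertices to edges, the only non-identity morphisms of $I(\Gamma)^{\op}$ go from an edge to a vertex; in particular $\N(I(\Gamma)^{\op})$ is one-dimensional. Since $\Gamma'$ is open, every edge of $\Gamma$ lies entirely in $\Gamma'$, entirely in $\Gamma''$, or is a \emph{crossing edge} $e=\{h,\tau(h)\}$ with one half-edge in each; write $E_{\partial}$ for the set of crossing edges, which is in bijection with the set of half-edges of $\Gamma'$ becoming internal in $\Gamma$. Let $U'\subset I(\Gamma)^{\op}$ (resp. $U''$) be the full subcategory on $V'\cup E'\cup E_{\partial}$ (resp. on $V''\cup E''\cup E_{\partial}$). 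Using the classification of morphisms, every simplex of $\N(I(\Gamma)^{\op})$ — a chain containing at most one non-identity arrow — lies in $\N(U')$ or in $\N(U'')$, while $\N(U')\cap\N(U'')=\N(E_{\partial})$ with $E_{\partial}$ discrete. Thus $\N(I(\Gamma)^{\op})$ is the pushout of $\N(U')\leftarrow\N(E_{\partial})\to\N(U'')$ along monomorphisms, and since $\colim$ is compatible with such pushouts of index categories (adjunction yoga using that $\Fun(-,\C)$ carries this pushout to a pullback, together with \cite[\S 4.1]{lurie:htt}) we obtain a pushout square in $\C$ with corners $\colim_{E_{\partial}}X\delta$, $\colim_{U'}X\delta$, $\colim_{U''}X\delta$, $X(\Gamma)$.

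It then remains to identify the first three corners. For a crossing edge $e$, $\gamma_{\Gamma}(e)$ is the underlying two-element cyclically ordered set, so $\delta_{\Gamma}(e)\cong\langle 1\rangle^{\vee}$, which is also the value of the coincidence diagram of the two-valent corolla $C_{e}$ carrying the induced cyclic order; hence $\colim_{E_{\partial}}X\delta\simeq\coprod_{e\in E_{\partial}}X(C_{e})=X(\Xi)$. For $U'$, I would show that the inclusion of the full subcategory $D_{0}$ on $V'\cup E'$ into $U'$ is cofinal: every crossing-edge object of $U'$ admits a unique morphism to an object of $D_{0}$, namely to its $\Gamma'$-side endpoint, and the undercategories $(D_{0})_{x/}$ each possess an initial object, hence are weakly contractible, so the criterion \cite[4.1.3.1]{lurie:htt} applies. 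Moreover, because $\Gamma'$ is open we have $H_{\Gamma'}(v)=H_{\Gamma}(v)$ for $v\in V'$, so $D_{0}$ is isomorphic to $I(\Gamma')^{\op}$ compatibly with the coincidence diagrams; therefore $\colim_{U'}X\delta\simeq X(\Gamma')$, and symmetrically $\colim_{U''}X\delta\simeq X(\Gamma'')$. Substituting these identifications produces the asserted square; a final check — tracing the cofinality equivalences against the graph morphisms $C_{e}\to\Gamma'$ and $C_{e}\to\Gamma''$ projecting onto the appropriate half-edges — identifies its maps with the evident canonical ones.

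I expect the main obstacle to be the combinatorial bookkeeping of the last two steps: verifying the cofinality of $D_{0}\hookrightarrow U'$ and checking that $\delta_{\Gamma}$ restricts on the nose to the coincidence diagram of $\Gamma'$ under the induced ribbon (resp. framed) structure, and then arranging the three separate identifications so that they assemble into a single commutative square with the stated maps rather than merely a square with the stated vertices. The verification that $\N(I(\Gamma)^{\op})$ genuinely decomposes as the above pushout, while elementary, is the structural heart of the argument, so I would take care to spell it out.
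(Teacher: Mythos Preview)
Your argument is correct and is precisely the approach the paper takes: the paper's proof is a single sentence invoking \cite[4.2.3.8]{lurie:htt} to cover the state sum diagram $\N(I(\Gamma)^{\op})$ by two subdiagrams with colimits $X(\Gamma')$ and $X(\Gamma'')$, and your $U'$, $U''$, $E_\partial$ together with the cofinality step simply spell out that cover and the identification of the corners which the paper leaves implicit.
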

\begin{proof}
	This is an immediate consequence of \cite[4.2.3.8]{lurie:htt} which allows us to compute the
	state sum colimit $X(\Gamma)$ by covering the state sum diagram with two subdiagrams whose
	corresponding colimits yield $X(\Gamma'')$ and $X(\Gamma')$, respectively.
\end{proof}

\begin{exa} \label{prop:ribpushouts}
	The following examples of pushout squares as given by Proposition \ref{prop:descent} will be
	used below:
	\begin{enumerate}
		\item 
		\begin{equation}\label{eq:push1}
		\begin{tikzpicture}[auto,every loop/.style={in=130,out=50,looseness=70}]

				\node (M21) {$X(\;$\tikz[baseline=(current bounding
				box.center),scale=0.7]{
			  \filldraw[black] (0,0) circle(1.5pt);
			  \draw (0:0) -- (30:.5);
			  \draw (0:0) -- (150:.5);
			  \draw (0:0) -- (270:.5);
			}$\;)$}; 

		\node (M22) [right= of M21] {$X(\;$\tikz[baseline=(current bounding box.center),scale=0.7]{
			  \filldraw[black] (0,0) circle(1.5pt);
			  \filldraw[black] (0,0.8) circle(1.5pt);
			  \draw (0,.4) circle(0.4);
			  \draw (0,0) -- (0,-.5); 
		}$\;)$}; 

		\node (M11) [above= of M21] {$X(\;$\tikz[baseline=(current bounding box.center),scale=0.7]{
			  \filldraw[black] (-.2,0) circle(1.5pt);
			  \filldraw[black] (0.2,0) circle(1.5pt);
			  \draw (-.2,-.5) -- (-.2,0) -- (-.2,.5);
			  \draw (.2,-.5) -- (.2,0) -- (.2,.5);
		}$\;)$};

		\node (M12) at (M22 |- M11) {$X(\;$\tikz[baseline=(current bounding box.center),scale=0.7]{
			  \filldraw[black] (0,0) circle(1.5pt);
			  \draw (0,.5) -- (0,0) -- (0,-.5); 
			}$\;)$};

		\draw [->] (M11) -- (M12);
		\draw [->] (M11) -- (M21);
		\draw [->] (M21) -- (M22);
		\draw [->] (M12) -- (M22);

		\end{tikzpicture}
	\end{equation}
\item \begin{equation}\label{eq:push2}
\begin{tikzpicture}[auto,every loop/.style={in=130,out=50,looseness=70}]

	\node (N11) {$X(\;$\tikz[baseline=(current bounding box.center),scale=0.7]{
		  \filldraw[black] (0,0) circle(1.5pt);
		  \draw (0,-.5) -- (0,0) -- (0,.5);
		}$\;)$};

	\node (N12) [right= of N11] {$X(\;$\tikz[baseline=(current bounding box.center),scale=0.7]{
			  \filldraw[black] (270:.4) circle(1.5pt);
			  \draw (0,0) circle(0.4);
			  \draw (270:.4) -- (270:.9);
			  \draw (90:.1) -- (270:.4);
	  }$\;)$}; 

	\node (N21) [below= of N11] {$X(\;$\tikz[baseline=(current bounding box.center),scale=0.7]{
		  \filldraw[black] (0,.5) circle(1.5pt);
		  \draw (0,0) -- (0,.5);
		}$\;)$}; 

	\node (N22) at (N12 |- N21)  {$X(\;$\tikz[baseline=(current bounding box.center),scale=0.7]{
		  \filldraw[black] (0,0) circle(1.5pt);
		  \filldraw[black] (0,0.4) circle(1.5pt);
		  \draw (0,.4) circle(0.4);
		  \draw (0,0) -- (0,-.5); 
			  \draw (0,0) -- (0,.4); 
		}$\;)$}; 

	\draw [->] (N11) -- (N12);
	\draw [->] (N11) -- (N21);
	\draw [->] (N21) -- (N22);
	\draw [->] (N12) -- (N22);

	\end{tikzpicture}
	\end{equation}
	\item Let $\Gamma$ be a ribbon (framed) graph which contains the graph
		\[
		\tikz[yshift=4ex,scale=0.7]{
			  \filldraw[black] (270:.4) circle(1.5pt);
			  \filldraw[black] (270:.9) circle(1.5pt);
			  \filldraw[black] (90:.1) circle(1.5pt);
			  \draw (0,0) circle(0.4);
			  \draw (270:.4) -- (270:.9);
			  \draw (90:.1) -- (270:.4);
		}
	\]
	as a subgraph and so that the central vertex has valency $4$ in $\Gamma$. Let $\Gamma'$
	denote the ribbon (framed) graph obtained by removing the graph
		\[
		\tikz[yshift=4ex,scale=0.7]{
			  \filldraw[black] (270:.4) circle(1.5pt);
			  \draw (0,0) circle(0.4);
			  \draw (270:.4) -- (270:.9);
			  \draw (90:.1) -- (270:.4);
		}.
	\]
	Then we have a pushout square
	\begin{equation}\label{eq:push3}
	\begin{tikzpicture}[auto]
		\node (A11)  {$X(\;$\tikz[baseline=(current bounding box.center),scale=0.7]{
			\filldraw[black] (-.2,0) circle(1.5pt);
			\filldraw[black] (.2,0) circle(1.5pt);
			\draw (-.2,-.5) -- (-.2,0) -- (-.2,.5);
			\draw (.2,-.5) -- (.2,0) -- (.2,.5);
		}$\;)$};

		\node (A12) [right= of A11] {$X(\;$\tikz[baseline=(current bounding box.center),scale=0.7]{
			  \filldraw[black] (270:.4) circle(1.5pt);
			  \draw (0,0) circle(0.4);
			  \draw (270:.4) -- (270:.9);
			  \draw (90:.1) -- (270:.4);
		}$\;)$}; 

		\node (A21) [below= of A11] {$X(\Gamma')$};

		\node (A22) at (A12 |- A21) {$X(\Gamma)$.}; 

		\draw [->] (A11) -- (A12);
		\draw [->] (A11) -- (A21);
		\draw [->] (A21) -- (A22);
		\draw [->] (A12) -- (A22);
	\end{tikzpicture}
	\end{equation}
	\item \label{exa:local}
	The pushout square 
	\begin{equation}\label{eq:push4}
	\begin{tikzpicture}[auto]
		\node (A11)  {$X(\;$\tikz[baseline=(current bounding box.center),scale=0.7]{
			\filldraw[black] (-.2,0) circle(1.5pt);
			\filldraw[black] (.2,0) circle(1.5pt);
			\draw (-.2,-.5) -- (-.2,0) -- (-.2,.5);
			\draw (.2,-.5) -- (.2,0) -- (.2,.5);
		}$\;)$};

		\node (A12) [right= of A11] {$X(\;$\tikz[baseline=(current bounding box.center),scale=0.7]{
			  \filldraw[black] (270:.4) circle(1.5pt);
			  \draw (0,0) circle(0.4);
			  \draw (270:.4) -- (270:.9);
			  \draw (90:.1) -- (270:.4);
		}$\;)$}; 

		\node (A21) [below= of A11] {$X(\;$\tikz[baseline=(current bounding box.center),scale=0.7]{
		  \filldraw[black] (-.2,.4) circle(1.5pt);
		  \draw (-.2,0) -- (-.2,.4);
		  \filldraw[black] (.2,0) circle(1.5pt);
		  \draw (.2,0) -- (.2,.4);
		}$\;)$};

		\node (A22) at (A12 |- A21) {$X(\;$\tikz[baseline=(current bounding box.center),scale=0.7]{
			  \filldraw[black] (270:.4) circle(1.5pt);
			  \filldraw[black] (90:.1) circle(1.5pt);
			  \filldraw[black] (270:.9) circle(1.5pt);
			  \draw (0,0) circle(0.4);
			  \draw (270:.4) -- (270:.9);
			  \draw (90:.1) -- (270:.4);
		}$\;)$}; 

		\draw [->] (A11) -- (A12);
		\draw [->] (A11) -- (A21);
		\draw [->] (A21) -- (A22);
		\draw [->] (A12) -- (A22);
	\end{tikzpicture}
	\end{equation}
	is a special case of \eqref{eq:push3}.
\end{enumerate}
\end{exa}

We use Proposition \ref{prop:descent} to obtain localization sequences for topological Fukaya
categories.

\begin{prop}[Localization]\label{prop:localization} Let $\Gamma$ be a ribbon graph and $\Gamma' \subset \Gamma$ an open subgraph with
	complement $\Gamma''$. 
	\begin{enumerate}
		\item There is a canonical pushout square 
			\begin{equation}\label{eq:local}
			\xymatrix{
				\Ft(\Gamma') \ar[r]\ar[d] & \Ft(\Gamma) \ar[d]\\
				0 \ar[r] & \Ft(\underline{\Gamma''})
			}
			\end{equation}
			in $\Lmot$. 
		\item Assume that $\Gamma$ carries the structure of a framed graph. Then there is a
			lift of \eqref{eq:local} to a pushout diagram of $\ZZ$-graded Fukaya
			categories 
			\[
				\xymatrix{
					\Ff(\Gamma') \ar[r]\ar[d] & \Ff(\Gamma) \ar[d]\\
					0 \ar[r] & \Ff(\underline{\Gamma''})
				}
			\]
			in $\Lmo$. 
	\end{enumerate}
\end{prop}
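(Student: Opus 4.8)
Both assertions follow from one argument, which I carry out for~(2), the $\ZZ$-graded case, and at the end transcribe to~(1). The plan is to feed two applications of the descent square of Proposition~\ref{prop:descent} into a pushout-pasting step, trivialising one corner via the vanishing of the Fukaya category of a univalent corolla, and then to reconcile the outcome with $\underline{\Gamma''}$ using the edge-contraction invariance of Proposition~\ref{thm:funcontract}.

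First I would apply Proposition~\ref{prop:descent} to the open subgraph $\Gamma'\subset\Gamma$ with complement $\Gamma''$, obtaining a pushout square
\[
\xymatrix{
\Ff(\Xi) \ar[r]\ar[d] & \Ff(\Gamma'') \ar[d]\\
\Ff(\Gamma') \ar[r] & \Ff(\Gamma)
}
\]
in $\Lmo$, where $\Xi$ is a disjoint union of bivalent corollas (a single vertex with two half-edges), one for each half-edge of $\Gamma'$ that becomes internal in $\Gamma$, each carrying the paracyclic structure induced from the corresponding edge of $\Gamma$. Since $\Lmo$ has a zero object and the relevant colimits (Remark~\ref{rem:colimits}), and since the cofibres of the two horizontal maps of a pushout square are canonically identified (pasting lemma, \cite[4.4.2.1]{lurie:htt}), this gives
\[
\on{cofib}\!\bigl(\Ff(\Gamma')\to\Ff(\Gamma)\bigr)\;\simeq\;\on{cofib}\!\bigl(\Ff(\Xi)\to\Ff(\Gamma'')\bigr),
\]
so it remains to identify the right-hand term with $\Ff(\underline{\Gamma''})$.

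For that I would pass to the closure $\overline{\Gamma''}$ — obtained from $\Gamma''$ by attaching a univalent vertex to each half-edge of $\Gamma''$ that becomes internal in $\Gamma$, with the evident framed structure extending that of $\Gamma''$. Then $\Gamma''\subset\overline{\Gamma''}$ is open with complement a disjoint union of univalent corollas, one for each such half-edge, and a second application of Proposition~\ref{prop:descent} yields a pushout square
\[
\xymatrix{
\Ff(\Xi) \ar[r]\ar[d] & \textstyle\coprod\Ff^0 \ar[d]\\
\Ff(\Gamma'') \ar[r] & \Ff(\overline{\Gamma''})
}
\]
whose upper-left corner is again $\Ff(\Xi)$ and whose left vertical map is the same $\Ff(\Xi)\to\Ff(\Gamma'')$ as before: in both decompositions one attaches the bivalent corolla of a straddling edge to $\Gamma''$ along the half of that edge lying in $\Gamma''$, so the corollas (with their paracyclic structures) and the gluing maps coincide. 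By Remark~\ref{rem:anZ} one has $\Ff^0\simeq A^0_{\ZZ}=0$, so the upper-right corner vanishes and the square exhibits $\Ff(\overline{\Gamma''})\simeq\on{cofib}(\Ff(\Xi)\to\Ff(\Gamma''))$. Finally, $\underline{\Gamma''}$ is obtained from $\overline{\Gamma''}$ by contracting, one at a time, each edge joining an attached univalent vertex to the rest of the graph — such a contraction deletes exactly that vertex and the corresponding dangling half-edge — and since $\Ff$ is unital $2$-Segal (Theorem~\ref{thm:matrixZ}), Proposition~\ref{thm:funcontract} sends each of these edge contractions to an equivalence; hence $\Ff(\overline{\Gamma''})\simeq\Ff(\underline{\Gamma''})$. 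Composing the three displayed identifications produces~\eqref{eq:local} as a pushout. Assertion~(1) is obtained verbatim with $\Ft$, the state sum on $\Rib$, Theorem~\ref{thm:matrix} and Remark~\ref{rem:an} in place of $\Ff$, the state sum on $\Rib_\infty$, Theorem~\ref{thm:matrixZ} and Remark~\ref{rem:anZ}. One may moreover check, on the generators of Remark~\ref{rem:anZ}, that $\Ff(\Xi)\to\Ff(\Gamma'')$ is quasi-fully faithful, so that by Lemma~\ref{lem:qffpushout} the map $\Ff(\Gamma')\to\Ff(\Gamma)$ is quasi-fully faithful and \eqref{eq:local} is an exact sequence in the sense of Section~\ref{sec:exact}.

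The step I expect to be the main obstacle is the matching up of the two descent squares — checking that the corollas of $\Xi$, together with their induced paracyclic structures and the attaching map $\Ff(\Xi)\to\Ff(\Gamma'')$, are literally the same for the decompositions $\Gamma=\Gamma'\cup\Gamma''$ and $\overline{\Gamma''}=\Gamma''\cup(\text{univalent corollas})$ — together with the combinatorial verification that $\underline{\Gamma''}$ arises from $\overline{\Gamma''}$ by the iterated edge contraction described above, including the degenerate case in which a vertex of $\Gamma''$ whose half-edges all dangle into $\Gamma'$ becomes isolated in $\underline{\Gamma''}$ and must be taken to contribute a zero object. Everything else is the formal manipulation of pushout squares plus the input $\Ff^0\simeq0$.
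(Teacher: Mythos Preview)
Your proposal is correct and follows essentially the same route as the paper: two applications of Proposition~\ref{prop:descent}, the vanishing $\Ff^0\simeq 0$, and the identification $\Ff(\overline{\Gamma''})\simeq\Ff(\underline{\Gamma''})$. The only cosmetic differences are that the paper takes the univalent corollas $\Psi$ (rather than $\Gamma''$) as the open subgraph in the second descent square---either choice works since both are open and the square is symmetric---and that the paper stacks the two squares and invokes \cite[4.4.2.1]{lurie:htt} directly, whereas you phrase the same pasting via cofibres; your justification of $\Ff(\overline{\Gamma''})\simeq\Ff(\underline{\Gamma''})$ through iterated edge contractions is exactly what the paper's appeal to ``the $2$-Segal property'' unpacks to via Proposition~\ref{thm:funcontract}.
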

\begin{proof} We treat the framed case, the ribbon case is analogous.
	By Proposition \ref{prop:descent}, we have a pushout square 
	\begin{equation}\label{eq:p1}
			\xymatrix{
				\Ff(\Xi) \ar[r]\ar[d] & \Ff(\Gamma'') \ar[d]\\
				\Ff(\Gamma') \ar[r] & \Ff(\Gamma).
			}
	\end{equation}
	Another application of Proposition \ref{prop:descent} yields a pushout square 
	\begin{equation}\label{eq:p2}
			\xymatrix{
				\Ff(\Xi) \ar[r]\ar[d] & \Ff(\Gamma'') \ar[d]\\
				\Ff(\Psi) \ar[r] & \Ff(\overline{\Gamma''})
			}
	\end{equation}
	where $\Psi$ denotes a disjoint union of copies of the corolla
	\[
		\tikz[yshift=2ex,scale=0.7]{
			\filldraw[black] (-.2,0) circle(1.5pt);
			\draw (-.2,-.5) -- (-.2,0);
		}
	\]
	again indexed by those half-edges in $\Gamma'$ which become internal in $\Gamma$.  
	We have $\Ff(\Psi) \simeq 0$ so that, using the universal property of the pushout
	\eqref{eq:p1}, we may combine \eqref{eq:p1} and \eqref{eq:p2} to obtain a canonical diagram 
	\[
			\xymatrix{
				\Ff(\Xi) \ar[r]\ar[d] & \Ff(\Gamma'') \ar[d]\\
				\Ff(\Gamma') \ar[r]\ar[d] & \ar[d] \Ff(\Gamma)\\
				0 \ar[r] & \Ff(\overline{\Gamma''}).
			}
	\]
	The top and exterior square are pushouts so that, by \cite[4.4.2.1]{lurie:htt}, the bottom
	square is a pushout square as well. To obtain the final statement, we note that, by the
	$2$-Segal property of $\Ff$, we have an equivalence $\Ff(\overline{\Gamma''}) \simeq
	\Ff(\underline{\Gamma''})$.
\end{proof}

\begin{rem} Note that the argument of Proposition \ref{prop:localization} generalizes to any
	co(para)cyclic $2$-Segal object $X$ with values in a {\em pointed} $\infty$-category
	satisfying $X^0 \simeq 0$.
\end{rem}

\begin{rem} Define $\Ff(\Gamma \; \text{on} \; \Gamma')$ to be  the full dg subcategory of
	$\Ff(\Gamma)$ spanned by the image of $\Ff(\Gamma')$. We call $\Ff(\Gamma \; \text{on} \;
	\Gamma')$ the {\em topological Fukaya category of $\Gamma$ with support in $\Gamma'$}. Then,
	in the terminology of Section \ref{sec:exact}, we have an exact sequence
	\[
		\xymatrix{
			\Ff(\Gamma \; \text{on}\; \Gamma') \ar[r]\ar[d] & \Ff(\Gamma) \ar[d]\\
			0 \ar[r] & \Ff(\underline{\Gamma''})
		}
	\]
	which should be regarded as an analog of Thomason-Trobaugh's proto-localization sequence
	for derived categories of perfect complexes in the context of algebraic K-theory of schemes.
\end{rem}

\subsection{A Mayer-Vietoris theorem}
\label{sec:motivic}

In this section, we use localization sequences for topological Fukaya categories to prove a
Mayer-Vietoris theorem. The argument is inspired by similar techniques in the context of algebraic
K-theory for schemes \cite{thomason-trobaugh}. We focus on the $\ZZ$-graded version but all results
have obvious $\Zt$-graded analogs which admit similar proofs. 

\begin{defi} Let $H:\Lmo \to \C$ be a functor with values in a stable $\infty$-category $\C$.
	\begin{enumerate}
		\item We say $H$ is {\em localizing} if it preserves finite sums and exact sequences.
		\item The functor $H$ is called {\em $\AA^1$-homotopy invariant} if, for every dg category $A$, the morphism 
			\[
				A \lra A[t]
			\]
			is an $H$-equivalence (i.e, is mapped to an equivalence under $H$). Here,
			the morphism $A \to A[t]$ is defined as the tensor product of $A$ with the
			morphism $k \to k[t]$ of $k$-algebras, interpreted as a dg functor of dg
			categories with one object. 
	\end{enumerate}
\end{defi}

Let $\C$ be an $\infty$-category with finite colimits and let $H: \Lmo \to \C$ be a functor, we have a
canonical morphism
\begin{equation}\label{eq:canmor}
	(HF)(\Gamma) \lra H(F(\Gamma)) 
\end{equation}
in $\C$ which is obtained by applying $H$ to the colimit cone over the diagram $F \circ \delta$.

\begin{defi} 
	We say a framed graph $\Gamma$ {\em satisfies Mayer-Vietoris with respect to $H$} if the
	morphism \eqref{eq:canmor} is an equivalence. In other words, a framed graph $\Gamma$
	satisfies Mayer-Vietoris with respect to $H$, if $H$ commutes with the state sum colimit
	parametrized by the incidence category of $\Gamma$. 
\end{defi}

\begin{thm}[Mayer-Vietoris]\label{thm:motivicMV}
  Let $\C$ be a stable $\infty$-category and let $H: \Lmo \to \C$ be a localizing $\AA^1$-homotopy
  invariant functor. Then every framed graph satisfies Mayer-Vietoris with respect to $H$.
\end{thm}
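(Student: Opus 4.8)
The plan is to prove the equivalent assertion that $H$ commutes with the state sum colimit $\Ff(\Gamma)$. Since both $\Ff$ and $H\circ\Ff$ send disjoint unions of graphs to coproducts and $H$ preserves finite sums, we may assume $\Gamma$ connected and argue by induction on the number of vertices of $\Gamma$. The structural input is that $H\circ\Ff\colon\N(\bLambda_{\infty})\to\C$ is itself a coparacyclic object in a finitely cocomplete $\infty$-category, so Proposition \ref{prop:descent} applies to it: for an open subgraph $\Gamma'\subset\Gamma$ with complement $\Gamma''$, the object $(HF)(\Gamma)$ is canonically the pushout of $(HF)(\Gamma')\leftarrow(HF)(\Xi)\to(HF)(\Gamma'')$, compatibly with the comparison map \eqref{eq:canmor}.

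For the inductive step assume $\Gamma$ has at least two vertices, pick one of them $v$, set $\Gamma''=\{v\}$ (the open star of $v$) and $\Gamma'=\Gamma\setminus\{v\}$. Then $\Gamma'$ has fewer vertices, while $\Gamma''$ is a single-vertex graph and $\Xi$ is a disjoint union of $2$-valent corollas; by the inductive hypothesis and the base case (below) all three satisfy Mayer--Vietoris, and applying $H$ to the descent square for $\Ff$ and comparing it with the one for $HF$ reduces the assertion for $\Gamma$ to the \emph{local statement} that $H$ carries
\[
\begin{tikzcd}
\Ff(\Xi)\arrow{r}\arrow{d} & \Ff(\Gamma'')\arrow{d}\\
\Ff(\Gamma')\arrow{r} & \Ff(\Gamma)
\end{tikzcd}
\]
to a pushout in $\C$. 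The base case, $\Gamma$ a single vertex, is a corolla some of whose half-edges are paired into self-loops; here $\Ff(\Gamma)$ is obtained from $\Ff^{d}\simeq A^{d}_{\ZZ}$ (Remark \ref{rem:anZ}) by identifying one pair of objects per self-loop, and one treats it directly by the same local analysis.

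In both situations the point is to control a pushout that glues objects of dg categories. Writing $\Ff(\Gamma)$ as obtained from $\Ff(\Gamma')\sqcup\Ff^{d}$ (or from $\Ff^{d}$, in the base case) by successively identifying $p$ pairs of objects along the maps from the copies of $\Ff^{1}\simeq k$ — one pair for each half-edge at $v$ internal in $\Gamma$ — produces a chain $\mathcal A_0\to\mathcal A_1\to\cdots\to\mathcal A_p=\Ff(\Gamma)$, and it suffices that $H$ preserve each pushout square $\mathcal A_{i-1}\to\mathcal A_i$. If the two identified objects lie in different connected components of $\mathcal A_{i-1}$, the move is a one-point amalgamation of (hereditary) dg categories; the constant functors $\Ff^{m}\to k$ split the Verdier sequence $\mathcal B'\hookrightarrow\mathcal A_i\twoheadrightarrow\mathcal B/\langle x\rangle$, and combining this with the exact sequence $k\to\mathcal B\to\mathcal B/\langle x\rangle$ shows, using only that $H$ is localizing and $\C$ stable, that $H(\mathcal A_i)$ is the predicted pushout. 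If the two objects lie in the same component, the move creates a loop; unwinding the pushout with the localization sequences of Section \ref{sec:thomason} expresses it through split exact sequences together with a single copy of the extension $k\to k[t]$ (cf.\ Example \ref{exa:ktZ}), and $\AA^1$-invariance of $H$ then identifies $H(\mathcal A_i)$ with the predicted pushout. Running through the chain proves the local statement, and hence the theorem.

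The step I expect to be the main obstacle is the loop case: one must show that every loop-creating identification is governed by a polynomial extension $A\to A[t]$ — and not, say, a Laurent extension, which $\AA^1$-invariance would fail to trivialise — and must isolate within the associated pushout the unique contribution that does not split. This is the combinatorial analog of the Thomason--Trobaugh/Bass fundamental sequence, carried out via the localization sequences for topological Fukaya categories, and it is the only place where $\AA^1$-homotopy invariance is genuinely used; the split case, and the verification that the intermediate amalgamations $\mathcal A_i$ remain hereditary, are routine bookkeeping.
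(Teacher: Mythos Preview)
Your overall architecture is reasonable, but there are genuine gaps and the paper's route is different. Your induction on vertices does not actually reduce the problem: by Proposition \ref{thm:funcontract} every connected framed graph is equivalent (for state-sum purposes) to a one-vertex graph, so all the content lies in your base case, which you defer to the ``same local analysis''. In that local analysis, the intermediate categories $\mathcal A_i$ obtained by identifying one pair of objects at a time are \emph{not} Fukaya categories of framed graphs, so the localization sequences of Section \ref{sec:thomason} do not apply to them; you would have to produce the required exact sequences by hand, and your claim that the $\mathcal A_i$ remain hereditary is neither proved nor used anywhere. Most seriously, your handling of the loop case is a hope, not an argument: identifying two objects in the same component of a dg category does not in general yield a polynomial extension, and you give no mechanism that isolates a single $k\to k[t]$ inside the pushout.

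The paper proceeds differently. After contracting a maximal forest to reach a one-vertex graph, it inducts on the number of \emph{loops}, peeling each off via the explicit geometric pushout \eqref{eq:push3}; all intermediate objects remain Fukaya categories of specific framed graphs, so Proposition \ref{prop:localization} and Lemma \ref{lem:induct} are always available. The algebraic core is not $k\to k[t]$ directly but Lemma \ref{lem:devissage}: adjoining a \emph{square-zero} endomorphism $\delta$ (in any degree) is an $H$-equivalence, via the genuine $\AA^1$-homotopy $\delta\mapsto\delta t$. Combined with the Kronecker-quiver description of the isolated loop (Example \ref{ex:kroneckerZ}) and the stability of quasi-fully-faithful maps under pushout (Lemma \ref{lem:qffpushout}), this yields Lemma \ref{lem:hpush}, the precise statement that the pushouts \eqref{eq:push1}--\eqref{eq:push3} are preserved by $H$. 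This $k[\delta]$-devissage, together with the discipline of staying inside the category of framed graphs at every stage, is the missing idea in your proposal.
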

\begin{proof}
	The proof will use the results from Section \ref{sec:lemmas} below. 
	Note that, by definition, every framed corolla satisfies Mayer-Vietoris. By Lemma 
	\ref{lem:hpush}, with respect to \eqref{eq:push1}, and Lemma \ref{lem:induct}(2), we deduce
	that the graph 
	\[
		\tikz[scale=0.7,baseline=(current bounding box.center)]{
			  \filldraw[black] (0,0) circle(1.5pt);
			  \filldraw[black] (0,.8) circle(1.5pt);
			  \draw (0,.4) circle(0.4);
			  \draw (0,0) -- (0,-.5); 
		}
	\]
	provided with any framing, satisfies Mayer-Vietoris. By Proposition \ref{thm:funcontract}, we may contract an internal
	edge to obtain that 
	\[
		\tikz[scale=0.7,baseline=(current bounding box.center)]{
			  \filldraw[black] (0,0) circle(1.5pt);
			  \draw (0,.4) circle(0.4);
			  \draw (0,0) -- (0,-.5); 
		}
	\]
	satisfies Mayer-Vietoris.
	Similarly, by Lemma \ref{lem:hpush}, with respect to \eqref{eq:push2}, and Lemma \ref{lem:induct}(2), we obtain
	that the graph
	\[
		\tikz[scale=0.7,baseline=(current bounding box.center)]{
			  \filldraw[black] (270:.4) circle(1.5pt);
			  \draw (0,0) circle(0.4);
			  \draw (270:.4) -- (270:.9);
			  \draw (90:.1) -- (270:.4);
	  	}
	\]
	with any framing, satisfies Mayer-Vietoris. Here, we again use Proposition \ref{thm:funcontract} to contract
	the internal edge of the graph in the bottom right corner of \eqref{eq:push2}. 

	Let $\Gamma$ be any framed graph. Since $H$ commutes with finite sums, we may assume that
	$\Gamma$ is connected. By Proposition \ref{thm:funcontract}, $\Gamma$ satisfies
	Mayer-Vietoris if and only if the graph $\Gamma'$ with one vertex obtained by collapsing a maximal forest in
	$\Gamma$ satisfies Mayer-Vietoris. Therefore, we may assume that $\Gamma$ has one vertex $v$. 
	We now proceed inductively on the number of loops in $\Gamma$. If $\Gamma$ does not have any loops
	then $\Gamma$ is a corolla and satisfies Mayer-Vietoris. Assume $\Gamma$ has a loop $l$. We
	isolate the loop $l$ by blowing up two edges so that we obtain a graph $\widetilde{\Gamma}$ with three vertices
	which contains the subgraph  
		\[
		\tikz[yshift=4ex,scale=0.7]{
			  \filldraw[black] (270:.4) circle(1.5pt);
			  \filldraw[black] (270:.9) circle(1.5pt);
			  \filldraw[black] (90:.1) circle(1.5pt);
			  \draw (0,0) circle(0.4);
			  \draw (270:.4) -- (270:.9);
			  \draw (90:.1) -- (270:.4);
		}
	\]
	with loop $l$, satisfies the conditions of Example \ref{prop:ribpushouts}(3), and $\Gamma$
	is obtained from $\widetilde{\Gamma}$ by contracting the two internal edges. 
	We now apply the induction hypothesis, Lemma 
	\ref{lem:hpush} with respect to \eqref{eq:push3}, and Lemma \ref{lem:induct}(1) to deduce
	that $\widetilde{\Gamma}$ and hence, by Proposition \ref{thm:funcontract}, $\Gamma$
	satisfies Mayer-Vietoris, concluding the argument.
\end{proof}

\begin{rem} Let $k$ be a field of characteristic $0$. An example of a localizing functor for which 
	Mayer-Vietoris fails (since $\AA^1$-homotopy invariance does not hold) is given by Hochschild homology. Applying $HH_* \circ j_! F$
	to the square \eqref{eq:push1}, we obtain 
	  \[
	    \begin{tikzcd}
	      HH_*(k \amalg k) \arrow{r} \arrow{d}{f} & HH_*(k) \arrow{d}{f'}\\
	      HH_*(A^1) \arrow{r} & HH_*(k[t]) 
	    \end{tikzcd}
	  \]
	where the left vertical morphism becomes an equivalence by Proposition \ref{prop:exact}.
	However, the right vertical morphism is {\em not} an equivalence: By the
	Hochschild-Kostant-Rosenberg theorem, we have $HH_1(k[t]) \cong k[t]$ while $HH_1(k) \cong 0$.
	In this situation, replacing Hochschild homology by periodic cyclic homology leads to an
	$\AA^1$-homotopy invariant functor which therefore satisfies Mayer-Vietoris.
\end{rem}

\subsection{Lemmas}
\label{sec:lemmas}

We collect technical results for the proof of the Mayer-Vietoris theorem.

\begin{prop}\label{prop:exact} Let $\C$ be a stable $\infty$-category and let $H:\Lmo \to \C$ be
	a localizing functor. Then the coparacyclic object 
  \[
	  HF: \N(\Lambda_{\infty}) \lra \C
  \]
  is $1$-Segal.
\end{prop}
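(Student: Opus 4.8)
The plan is to deduce the statement from Theorem \ref{thm:matrixZ}, which asserts that $F = \Ff$ is unital $2$-Segal, together with the vanishing $F^0 \simeq 0$ in $\Lmo$; indeed, by Remark \ref{rem:anZ} we have $F^0 \simeq A^0_{\ZZ}$, the $\ZZ$-graded $k$-linear envelope of the empty linearly ordered set, which has no objects and is therefore a zero object. It then suffices to prove two things: \emph{(i)} $HF$ is again unital $2$-Segal, now as a cosimplicial object of the stable $\infty$-category $\C$; and \emph{(ii)} every unital $2$-Segal cosimplicial object $Y$ of a stable $\infty$-category with $Y^0 \simeq 0$ is $1$-Segal. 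The proposition then follows by applying (ii) to $Y = HF$, recalling that the Segal conditions for a co(para)cyclic object are by definition imposed on its underlying cosimplicial object.

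For (i), the crux is that a localizing functor carries a pushout square in $\Lmo$ along a quasi-fully faithful morphism to a pushout square in $\C$. Given a pushout square with legs $g\colon S \hookrightarrow T$ quasi-fully faithful and $S \to S'$, and pushout corner $T'$, Lemma \ref{lem:qffpushout} shows $S' \to T'$ is again quasi-fully faithful, while associativity of pushouts identifies the Verdier quotients: $T'/S' \simeq (T \amalg_S S') \amalg_{S'} 0 \simeq T \amalg_S 0 \simeq T/S$. Applying $H$ to the exact sequences $S \to T \to T/S$ and $S' \to T' \to T'/S'$ then shows that the two vertical cofibers of $H$ of the square agree compatibly; since a commutative square in a stable $\infty$-category whose induced map on vertical cofibers is an equivalence is bicartesian, $H$ of the square is a pushout. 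To apply this I observe that all the structure maps of $F^{\bullet}$ occurring in the squares \eqref{eq:2segal} and \eqref{eq:unital} — the long-edge inclusions $\{i,j\} \hookrightarrow \{i,i+1,\dots,j\}$ and the iterated cofaces $\{k,k+1\} \hookrightarrow \{0,1,\dots,n\}$ — are quasi-fully faithful: the basepoint-fixing cofaces are fully faithful dg functors between the linear dg categories of Remark \ref{rem:anZ}, the remaining cofaces differ from these by the invertible cocyclic rotation operators, and composites of quasi-fully faithful morphisms are again such. Hence $H$ preserves \eqref{eq:2segal} and \eqref{eq:unital}, so $HF$ is unital $2$-Segal and of course $HF^0 \simeq 0$.

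For (ii) I would induct on $n$, verifying the $1$-Segal pushout square for every $0 < k < n$ (there being nothing to check for $n \le 1$). For $n = 2$: the unitality square \eqref{eq:unital} with $k = 1$, together with $Y^{\{1\}} \simeq 0$, presents $Y^{\{0,1\}}$ as the cofiber of the edge map $Y^{\{1,2\}} \to Y^{\{0,1,2\}}$, and the simplicial identity $\sigma^1\delta^2 = \mathrm{id}$ makes the edge map $Y^{\{0,1\}} \to Y^{\{0,1,2\}}$ a section of the resulting quotient; hence $Y^{\{0,1,2\}} \simeq Y^{\{0,1\}} \oplus Y^{\{1,2\}}$ with the two edge maps as summand inclusions, which is exactly the required pushout. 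For $2 \le k \le n-1$: the $2$-Segal square \eqref{eq:2segal} for the polygonal subdivision $(0,k)$ gives $Y^{\{0,\dots,n\}} \simeq Y^{\{0,\dots,k\}} \amalg_{Y^{\{0,k\}}} Y^{\{0,k,k+1,\dots,n\}}$; the inductive hypothesis, applied to the object $Y^{\{0,k,k+1,\dots,n\}}$ — which is $Y$ at a simplex of dimension $n-k+1 < n$ — splits it as $Y^{\{0,k\}} \oplus Y^{\{k,\dots,n\}}$ with the $\{0,k\}$-edge a summand inclusion, whereupon the identity $A \amalg_B (B \oplus C) \simeq A \oplus C$ (for the summand inclusion $B \hookrightarrow B \oplus C$) in a stable category yields $Y^{\{0,\dots,n\}} \simeq Y^{\{0,\dots,k\}} \oplus Y^{\{k,\dots,n\}}$ with the face maps of the $1$-Segal square realizing the summand inclusions. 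The case $k = 1$ is handled in the same way, using the subdivision $(1,n)$ and the already-proved case $n = 2$.

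The step I expect to be most delicate is the bookkeeping in (ii): one must check that through the iterated pushouts it is precisely the cosimplicial face maps — not merely abstractly equivalent morphisms — that become the summand inclusions, so that the $1$-Segal squares come out genuinely cocartesian; the simplicial identities are invoked repeatedly for this. On the side of (i), pinning down the quasi-full-faithfulness of the cofaces of $F^{\bullet}$ relies on the explicit matrix-factorization model of \cite{dk:crossed} underlying Theorem \ref{thm:matrixZ} and Remark \ref{rem:anZ}.
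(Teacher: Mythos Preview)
Your proof is correct and takes a genuinely different route from the paper's.

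The paper argues directly: for each $n$ it writes down the diagram
\[
\xymatrix{
\Ff^{\{0,\dots,n-1\}} \ar[r]\ar[d] & \Ff^{\{0,\dots,n-1\}} \amalg \Ff^{\{n-1,n\}} \ar[r]\ar[d]^f & \Ff^{\{n-1,n\}}\ar[d]\\
\Ff^{\{0,\dots,n-1\}} \ar[r] & \Ff^{\{0,\dots,n\}} \ar[r] & \Ff^{\{n-1,n\}}
}
\]
in $\Lmo$ with exact rows (the bottom one being the semiorthogonal decomposition of the $A_n$ category). Since $H$ preserves finite sums and exact sequences, the outer vertical maps become equivalences after $H$, hence so does $H(f)$; induction on $n$ finishes. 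Your approach instead factors through two general statements: (i) $H$ carries pushouts along quasi-fully faithful maps to pushouts, so $HF$ remains unital $2$-Segal; (ii) in a stable $\infty$-category, any unital $2$-Segal cosimplicial object with $Y^0\simeq 0$ is $1$-Segal. Both steps are sound, and the external input (quasi-full faithfulness of cofaces of $\Ff^\bullet$) is of the same nature as the exactness the paper invokes---both come down to the explicit $A_n$ model of Remark~\ref{rem:anZ}.

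What each buys: the paper's argument is three lines and uses one tailored exact sequence. Yours isolates a reusable lemma---a partial converse to Proposition~\ref{prop:12segal} in the stable setting---which is of independent interest. Your step~(ii) can in fact be streamlined: the unitality square at $k=n-1$ alone, together with $Y^0\simeq 0$, already gives a split cofiber sequence $Y^{\{n-1,n\}}\to Y^n\to Y^{\{0,\dots,n-1\}}$ with section the coface $Y(\delta^n)$, and induction on $n$ then yields the full $1$-Segal decomposition without ever touching the $2$-Segal squares. This would eliminate most of the bookkeeping you flag as delicate. (Minor slip: in your cofiber argument for~(i) you write ``vertical cofibers'' where you mean the cofibers of the horizontal maps.)
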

\begin{proof}
  We have to show that, for every $n \ge 1$, the natural map
  \[
    H(\Ff^{\{0,1\}}) \amalg H(\Ff^{\{1,2\}}) \amalg \dots \amalg H(\Ff^{\{n-1,n\}}) \to
    H(\Ff^{\{0,1,\dots,n\}})
  \]
  is an equivalence in $\C$. We have a diagram in $\Lmo$
  \[
    \xymatrix{
      \Ff^{\{0,1,\dots,n-1\}} \ar[r]\ar[d] & \Ff^{\{0,1,\dots,n-1\}} \amalg \Ff^{\{n-1,n\}}
      \ar[r]\ar[d]^f & \Ff^{\{n-1,n\}}\ar[d]\\
      \Ff^{\{0,1,\dots,n-1\}} \ar[r] & \Ff^{\{0,1,\dots,n\}} \ar[r] & \Ff^{\{n-1,n\}}}
  \]
  with exact rows. After applying $H$, the rows stay exact, and $H(f)$ must be an equivalence, since
  its cofiber is $0$. We now proceed by induction on $n$.
\end{proof}

\begin{lem}\label{lem:devissage}
	Let $k[\delta]$ denote the differential $\ZZ$-graded $k$-algebra generated by $\delta$ in some degree with relation $\delta^2 =
	0$ and zero differential. Let $A$ be a dg category and consider a pushout diagram
	\begin{equation}\label{eq:pushdelta}
	    \begin{tikzcd}
	      k \arrow{r}\arrow{d} & k[\delta]\arrow{d}\\
	      A \arrow{r}{i} & A^\delta
	    \end{tikzcd}
	  \end{equation}
	so that $A^\delta$ is obtained from $A$ by adjoining the endomorphism $\delta$ to a fixed object in $A$.
	Then, for any localizing $\AA^1$-homotopy invariant $H: \Lmo \to \C$, the morphism $H(i)$ is an equivalence.
\end{lem}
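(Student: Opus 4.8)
The plan is to reduce the statement to a Thomason--Trobaugh type localization sequence and then apply $\AA^1$-invariance. The essential case is $A = k$, where $A^\delta = k[\delta]$ and $i$ is the unit morphism $k \to k[\delta]$; the general case is the $A$-relative version of the same argument.

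For $A = k$, I would first record the Koszul computation underlying Example~\ref{exa:ktZ}: taking $t$ of the even degree $1 - |\delta|$, the residue module $k = k[t]/(t)$ is perfect over $k[t]$ via the two-term resolution $k[t] \xrightarrow{\;t\;} k[t]$, and its derived endomorphism dga $\on{REnd}_{k[t]}(k)$ is $k[\delta]$ with $\delta^2 = 0$. Hence the thick subcategory of $\Perf_{k[t]}$ generated by $k$ is Morita equivalent to $\Perf_{k[\delta]}$, with $k$ corresponding to the free rank-one module. Globalizing over the (possibly twisted) projective line $\mathbb{P}^1$ of Example~\ref{ex:kroneckerZ}, the localization theorem --- here an instance of Proposition~\ref{prop:localization} for the topological Fukaya category of $\mathbb{P}^1$ --- provides a localization sequence
\[
  \Perf_{k[\delta]} \overset{j}{\lra} \Perf(\mathbb{P}^1) \overset{q}{\lra} \Perf_{k[t]},
\]
in which $j$ picks out the skyscraper $k_p$ at a point $p$ and $q$ is restriction to the complementary affine chart. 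Applying the localizing invariant $H$ turns this into a fibre sequence $H(k[\delta]) \to H(\mathbb{P}^1) \to H(k[t])$. The semiorthogonal decomposition $\Perf(\mathbb{P}^1) = \langle \mathcal{O}, \mathcal{O}(-1) \rangle$ (the graded Kronecker quiver of Example~\ref{ex:kroneckerZ}) splits $H(\mathbb{P}^1) \simeq H(k) \oplus H(k)$ onto the classes of $\mathcal{O}$ and $\mathcal{O}(-1)$, while $\AA^1$-invariance gives $H(k[t]) \simeq H(k)$; since $\mathcal{O}$ and $\mathcal{O}(-1)$ both restrict to the free module on the affine chart, $H(q)$ becomes a split epimorphism $H(k)^{\oplus 2} \to H(k)$. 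Consequently $H(j)$ identifies $H(k[\delta])$ with the direct summand $\on{fib}(H(q))$ of $H(\mathbb{P}^1)$, as a split monomorphism. Finally, composing $H(i) \colon H(k) \to H(k[\delta])$ with $H(j)$ yields the map classifying the object $k_p$, which by the Koszul resolution $\mathcal{O}(-1) \to \mathcal{O} \to k_p$ equals $[\mathcal{O}] - [\mathcal{O}(-1)]$; as this vanishes after restriction to the affine chart, it is precisely the tautological equivalence $H(k) \xrightarrow{\ \sim\ } \on{fib}(H(q))$ followed by the split inclusion into $H(\mathbb{P}^1)$. Cancelling the split monomorphism $H(j)$ then shows that $H(i)$ is an equivalence.

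For a general dg category $A$, I would run the same argument $A$-linearly, replacing $\Perf_{k[t]}$ and $\Perf(\mathbb{P}^1)$ by $A$-relative analogues obtained by letting the distinguished object $x_0$ of $A$ deform in an $\AA^1$-, respectively $\mathbb{P}^1$-, family, in such a way that the relevant skyscraper object has derived endomorphism dga the amalgamated free product $\on{End}_A(x_0) *_k k[\delta]$ --- matching the pushout presentation $A^\delta = A \amalg_k k[\delta]$ --- and the affine chart remains an $\AA^1$-family so that $\AA^1$-invariance still applies; the resulting localization sequence is again an instance of Proposition~\ref{prop:descent}. I expect the main obstacle to be exactly the construction of these relative auxiliary categories: one must check that the thick subcategory supported along the fibre over $x_0$ is genuinely Morita equivalent to $A^\delta$, i.e. that the free-product structure of $A \amalg_k k[\delta]$ is faithfully reproduced by the derived-endomorphism picture --- something that is transparent only when $\on{End}_A(x_0) = k$. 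Granting this, the remaining steps (the semiorthogonal splitting, $\AA^1$-invariance for the affine chart, and the identification of $H(i)$ with the resulting equivalence) go through as in the case $A = k$.
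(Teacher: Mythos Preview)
Your argument for $A=k$ is essentially sound, but the passage to general $A$ is a genuine gap, and you yourself flag it: identifying the thick subcategory supported on the fibre with $A^{\delta} = A \amalg_k k[\delta]$ is not straightforward when $\on{End}_A(x_0) \ne k$, and nothing you have written shows how to build the relative $\mathbb{P}^1$-category so that this holds. Since the lemma is stated for arbitrary $A$ (and is applied in Lemma~\ref{lem:hpush} exactly in that generality, with $A = \Ff(\Gamma')$), this is not a detail one can postpone.

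The paper sidesteps all of this with a much shorter, fully uniform argument that works directly at the level of the pushout square~\eqref{eq:pushdelta}. Extend the square to the right by the retraction $p\colon A^{\delta}\to A$ induced from $k[\delta]\to k$, $\delta\mapsto 0$, so that $p\circ i \simeq \id_A$. Then use the universal property of the pushout $A^{\delta}=A\amalg_k k[\delta]$ to produce a single morphism $h\colon A^{\delta}\to A^{\delta}[t]$ from the obvious map $A\to A^{\delta}[t]$ together with $k[\delta]\to A^{\delta}[t]$, $\delta\mapsto \delta t$. By construction $\ev_{1}\circ h=\id_{A^{\delta}}$ and $\ev_{0}\circ h=i\circ p$; since $\ev_0,\ev_1$ are $H$-equivalences by $\AA^1$-homotopy invariance, $H(i\circ p)$ is an equivalence, and combined with $p\circ i\simeq\id$ this forces $H(i)$ to be an equivalence. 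No Koszul duality, no auxiliary $\mathbb{P}^1$, no localization sequence, and no case distinction between $A=k$ and general $A$.

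Your approach does have conceptual appeal: it explains \emph{why} $k[\delta]$ behaves like $k$ for such invariants, via its role as the formal neighbourhood of a point on a line. But as a proof of the lemma it is both longer and incomplete, whereas the paper's explicit $\AA^1$-contraction $\delta\mapsto \delta t$ handles the general pushout in one stroke.
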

\begin{proof}
	We may extend \eqref{eq:pushdelta} to the diagram 
	\begin{equation}\label{eq:splitting}
	    \begin{tikzcd}
	      k \arrow{r}\arrow{d} & k[\delta]\arrow{d} \arrow{r}{\delta \mapsto 0}& k\arrow{d}\\
	      A \arrow{r}{i} & A^\delta \arrow{r}{p} & A
	    \end{tikzcd}
	  \end{equation}
	in which both squares are pushout squares so that $pi \simeq \id_A$. We will construct a
	commutative diagram
	\begin{equation}\label{eq:a1homotopy}
		\begin{tikzcd} 
			& A^\delta\\
			A^\delta \arrow{r}{h}\arrow{ur}{\on{id}} \arrow[swap]{dr}{i \circ p} &
			A^\delta[t]\arrow{u}[swap]{\ev_1} \arrow{d}{\ev_0}\\
			& A^\delta
		\end{tikzcd}
	\end{equation}
	in $\Lmo$ where $\ev_0$ and $\ev_1$ are $H$-equivalences. This will imply that $i \circ p$
	is an $H$-equivalence so that the diagram exhibits $p: A^\delta \to A$ as an 
	$\AA^1$-homotopy inverse of $i: A \to A^\delta$. In particular, we have that $i$ is an
	$H$-equivalence.

	To construct \eqref{eq:a1homotopy}, consider the morphism $\lambda: k[\delta] \to
	k[\delta,t]$ determined by $\lambda(\delta) = \delta t$. Further, let $g: k[\delta,t] \to
	A^\delta[t]$ be the morphism obtained by tensoring $k[\delta] \to A^\delta$ with $k[t]$.
	Further, let $f: A \to A^\delta[t]$ be the composite of the bottom horizontal morphisms in
	the diagram
	\[
	    \begin{tikzcd}
		    k \arrow{r}\arrow{d} & k[t]\arrow{d}\arrow{r} & k[\delta,t]\arrow{d}\\
		    A \arrow{r} & A[t] \arrow{r} & A^\delta[t]
	    \end{tikzcd}
	\]
	where all morphisms are the apparent ones.
	By construction, the above morphisms fit into a cone diagram
	\[
	    \begin{tikzcd}
		    k \arrow{r}\arrow{d} & k[\delta]\arrow{d} \arrow[bend left]{ddr}{g \lambda} & {}\\
		    A \arrow{r}{i} \arrow[bend right]{drr}[swap]{f} & A^\delta \arrow[dashed]{dr}{h} & {} \\
		    {} &{} & A^\delta[t]
	    \end{tikzcd}
	\]
	which determines the dashed morphism $h$ (up to contractible choice). It is now immediate to
	verify that the morphism $h$ fits into a diagram of the form \eqref{eq:a1homotopy} where
	$\ev_0$ and $\ev_1$ are the morphisms obtained by applying $A^\delta \otimes -$ to the
	morphisms $k[t] \to k, t \mapsto 0$, and $k[t] \to k, t \mapsto 1$, respectively.
\end{proof}

\begin{lem}\label{lem:induct} Let $\C$ be a stable $\infty$-category, and let $H: \Lmo \to \C$ be
	a localizing functor. Suppose 
	\begin{equation}\label{eq:ribp}
			\begin{tikzcd}
				\Lambda_{\infty}(\Gamma_1) \arrow{r}\arrow{d} & \Lambda_{\infty}(\Gamma_2) \arrow{d}\\
				\Lambda_{\infty}(\Gamma_3) \arrow{r} & \Lambda_{\infty}(\Gamma_4)
			\end{tikzcd}
	\end{equation}
	is a pushout diagram in $\P(\Lambda_{\infty})$ which stays a pushout diagram after application of $H
	\circ j_! \Ff$.
	\begin{enumerate}
		\item Assume that $\Gamma_1$, $\Gamma_2$, and $\Gamma_3$ satisfy Mayer-Vietoris.
			Then $\Gamma_4$ satisfies Mayer-Vietoris.
		\item Suppose $F({\Gamma_3}) \simeq 0$ and assume that any two among the ribbon
			graphs $\Gamma_1$, $\Gamma_2$, $\Gamma_4$ satisfy Mayer-Vietoris. 
			Then all ribbon graphs in \eqref{eq:ribp} satisfy Mayer-Vietoris.
	\end{enumerate}
\end{lem}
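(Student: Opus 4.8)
\medskip

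The plan is to phrase ``$\Gamma$ satisfies Mayer--Vietoris'' as the statement that a single natural transformation is an equivalence at $\Gamma$, and then to exploit that both functors involved send the square \eqref{eq:ribp} to a pushout square. Write $HF:=H\circ\Ff\colon\N(\Lambda_{\infty})\to\C$ for the coparacyclic object, and consider the two functors $\Phi:=j_!(HF)$ and $\Psi:=H\circ j_!\Ff$ from $\P(\Lambda_{\infty})$ to $\C$ (all Kan extensions below only involve the finite colimits that are present in the stable category $\C$, since the graphs in sight are finite). Since $j_!\Ff\circ j\simeq\Ff$ (restriction of a left Kan extension along the fully faithful $j$), we have $\Psi\circ j\simeq HF$, so the universal property of the left Kan extension supplies a natural transformation $c\colon\Phi\Rightarrow\Psi$; by Example \ref{ex:universal} in its paracyclic form and the definition of the canonical map \eqref{eq:canmor}, the component of $c$ at the presheaf $\Lambda_{\infty}(\Gamma)=\rho_j(\Gamma)$ is precisely the canonical morphism $(HF)(\Gamma)\to H(F(\Gamma))$. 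Hence $\Gamma$ satisfies Mayer--Vietoris if and only if $c$ is an equivalence there.

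Next I would apply $c$ to the whole pushout square \eqref{eq:ribp}, producing a commutative cube in $\C$: its top face is $\Phi$ applied to \eqref{eq:ribp}, its bottom face is $\Psi$ applied to \eqref{eq:ribp}, and its four connecting edges are the comparison maps $c_i\colon(HF)(\Gamma_i)\to H(F(\Gamma_i))$. The top face is a pushout because $\Phi=j_!(HF)$ preserves colimits (\cite[5.1.5.5]{lurie:htt}), and the bottom face is a pushout by the standing hypothesis on \eqref{eq:ribp}. Reading the cube as a morphism of pushout squares and passing to cofibres, the square with vertices $K_i:=\on{cofib}(c_i)$ is again a pushout square in $\C$, since the formation of cofibres commutes with pushouts. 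A check of the four cases then shows that a pushout square in the stable $\infty$-category $\C$ three of whose vertices vanish has its fourth vertex vanishing too --- the only case that is not purely formal is $K_2\simeq K_3\simeq K_4\simeq 0$, where $K_4\simeq\Sigma K_1$ forces $K_1\simeq 0$. This yields the principle I want: \emph{if three of the four framed graphs in \eqref{eq:ribp} satisfy Mayer--Vietoris, then so does the fourth.}

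Part (1) is then immediate, taking $\Gamma_1,\Gamma_2,\Gamma_3$ as the three graphs. For part (2) I would first settle $\Gamma_3$: since $F(\Gamma_3)\simeq 0$ and $H$ carries zero objects to zero objects, $H(F(\Gamma_3))\simeq 0$, and I claim $(HF)(\Gamma_3)\simeq 0$ as well. For this I would use that the graphs $\Gamma_3$ occurring in the applications have discrete incidence category (they are disjoint unions of corollas), so that $(HF)(\Gamma_3)=\coprod_v H(\Ff^{\,m_v-1})$ over the vertices $v$, with $m_v=|H(v)|$; this is $H$ applied to the coproduct $F(\Gamma_3)=\coprod_v\Ff^{\,m_v-1}$ (as $H$ preserves finite coproducts), and since $\Ff^{m}\simeq 0$ only for $m=0$ (Remark \ref{rem:anZ}), the vanishing of $F(\Gamma_3)$ forces every $m_v=1$ and hence every summand $H(\Ff^0)=H(0)\simeq 0$. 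Thus $c_3$ is an equivalence and $\Gamma_3$ satisfies Mayer--Vietoris; together with the assumed two of $\Gamma_1,\Gamma_2,\Gamma_4$ this gives three of the four graphs, and the principle of the previous paragraph finishes the proof. The step I expect to demand the most care is the cube bookkeeping in the second paragraph --- in particular checking that a pushout square of spectra with three vanishing vertices has a vanishing fourth vertex --- along with the (minor but real) point that $F(\Gamma_3)\simeq 0$ genuinely yields $(HF)(\Gamma_3)\simeq 0$, which relies on the combinatorial shape of $\Gamma_3$ rather than on $F(\Gamma_3)$ alone.

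\medskip
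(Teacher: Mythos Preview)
Your approach is essentially the same as the paper's: construct the comparison transformation $c\colon j_!(HF)\Rightarrow H\circ j_!\Ff$, apply it to the square \eqref{eq:ribp}, and use that both faces are pushouts. The paper phrases part (1) as ``pushouts of equivalences are equivalences'' rather than passing to cofibres, but this is cosmetic.

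Your treatment of part (2) is more careful than the paper's, which simply says ``follows similarly''. You correctly isolate the point that $F(\Gamma_3)\simeq 0$ alone does not obviously force $(HF)(\Gamma_3)\simeq 0$, and you supply the missing step by noting that in the applications $\Gamma_3$ is a disjoint union of corollas, so it satisfies Mayer--Vietoris trivially (indeed the paper itself observes elsewhere that every corolla satisfies Mayer--Vietoris by definition). This caveat is genuine: the lemma as literally stated would need such an auxiliary hypothesis on $\Gamma_3$, and your acknowledgement of this is appropriate rather than a defect.
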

\begin{proof}
	Since $H \circ j_! \Ff$ is a left extension of $H\Ff: \N(\Lambda_{\infty}) \to \C$ along $j:
	\Lambda_{\infty} \to \P(\Lambda_{\infty})$, we have a canonical morphism $\xi: j_!(H\Ff) \to H \circ j_! \Ff$ in
	$\Fun(\P(\Lambda_{\infty}),\C)$ which, evaluated at
	an object of the form $\Lambda_{\infty}(\Gamma)$ yields the canonical morphism
	\[
		(H\Ff)(\Gamma) \to H(\Ff(\Gamma))
	\]
	in $\C$ of \eqref{eq:canmor}. We give the argument for (1). 
	Applying $\xi$ to the square \eqref{eq:ribp}, we obtain a morphism of pushout squares in $\C$ 
	which is an equivalence on all vertices except for the bottom right vertex
	\[
		(H\Ff)(\Gamma_4) \to H(\Ff(\Gamma_4)).
	\]
	But pushouts of equivalences are equivalences so that this morphism must be an equivalence
	as well. The statement of (2) follows similarly.
\end{proof}

\begin{lem}\label{lem:hpush} Let $H: \Lmo \to \C$ be a localizing $\AA^1$-homotopy invariant. The
	pushout diagrams \eqref{eq:push1}, \eqref{eq:push2}, and \eqref{eq:push3} (cf. Proposition
	\ref{prop:ribpushouts}), with $X=j: \N(\Lambda_{\infty}) \to \P(\Lambda_{\infty})$, stay
	pushout diagrams after application of $H \circ j_!\Ff$.
\end{lem}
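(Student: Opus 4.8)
We outline the argument. Since $j_!\Ff\colon\P(\Lambda_\infty)\to\Lmo$ is a left Kan extension it preserves colimits, so applying it to the three squares of Example~\ref{prop:ribpushouts} --- which are pushouts in $\P(\Lambda_\infty)$ by Proposition~\ref{prop:descent} --- produces pushout squares in $\Lmo$, and it suffices to show that $H$ carries each of these to a pushout square in $\C$. We shall use two formal facts about the stable $\infty$-category $\C$. First: a functor sends a pushout square to a pushout square whenever it sends one of the two parallel pairs of arrows to equivalences. Second: if a pushout square in $\Lmo$ has the form $U\simeq V\amalg_S T$ with $S\to T$ quasi-fully faithful, then the arrow $V\to U$ is quasi-fully faithful by Lemma~\ref{lem:qffpushout}, and pasting of pushouts identifies the Verdier quotients $T/S\simeq U/V$; hence the localizing functor $H$ carries such a square to a morphism of cofiber sequences which is an equivalence on the cofibers, and therefore to a pushout square. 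The plan is to identify the $\Ff$-images of the three squares explicitly --- via Remark~\ref{rem:anZ} and Examples~\ref{exa:ktZ}, \ref{ex:kroneckerZ} --- and then to factor each as a paste of squares of these two types.

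For \eqref{eq:push1}: evaluating the state sums identifies the $\Ff$-image, up to equivalence in $\Lmo$, with a pushout square in which the left vertical arrow is the pair of coface maps $\Ff^1\rightrightarrows\Ff^2$ sharing their common face, and the right vertical arrow is the unit $\Ff^1\simeq k\hookrightarrow k[t]\simeq\Ff(\text{loop with tail})$, where $|t|=2r$ is the winding number of the chosen framing around the loop (Example~\ref{exa:ktZ}). By $\AA^1$-homotopy invariance $H$ sends the right vertical arrow to an equivalence; by Proposition~\ref{prop:exact} and $\Ff^0\simeq 0$ the left vertical arrow realizes the $1$-Segal splitting $H(\Ff^2)\simeq H(\Ff^1)\oplus H(\Ff^1)$, so $H$ sends it to an equivalence as well. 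The first formal fact then applies.

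For \eqref{eq:push2} and \eqref{eq:push3} the local piece is the loop graph with two external half-edges, whose topological Fukaya category $\K$ is the graded Kronecker quiver --- a twisted form of $D^b(\mathbb P^1)$ (Example~\ref{ex:kroneckerZ}). The structural input is that the $\Ff^1$-factor(s) attached to the external half-edge(s) map into $\K$ onto the point-like objects --- whose endomorphism dg algebras are copies of $k[\delta]/(\delta^2)$ --- and that these span a quasi-fully faithful subcategory of $\K$ with Verdier quotient $\Ff$ of the corresponding capped graph, namely $k[t]$ when one half-edge is capped and $k[t^{\pm1}]$ when both are; this is the combinatorial shadow of the localizations $D^b(\mathbb P^1)\to D^b(\AA^1)$ and $D^b(\mathbb P^1)\to D^b(\mathbb G_m)$, i.e.\ of Proposition~\ref{prop:localization}. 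Granting it, one factors the attaching map $\Ff^1\to\K$ (resp.\ $\Ff^1\amalg\Ff^1\to\K$) through $k[\delta]/(\delta^2)$ (resp.\ through $D:=k[\delta_1]/(\delta_1^2)\amalg k[\delta_2]/(\delta_2^2)$), which splits \eqref{eq:push2} (resp.\ \eqref{eq:push3}), after applying $\Ff$, as a horizontal paste of two pushout squares: in the first, both horizontal arrows are iterated adjunctions of a square-zero endomorphism, hence $H$-equivalences by Lemma~\ref{lem:devissage} (so the first formal fact applies); the second has quasi-fully faithful horizontal arrows with common Verdier quotient $k[t]$ (resp.\ $k[t^{\pm1}]$) (so the second formal fact applies). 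Pasting the two preserved squares shows that $H$ preserves \eqref{eq:push2} and \eqref{eq:push3}.

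The genuinely delicate point is the structural input just cited: one must unwind the state-sum colimits precisely enough to see that the external-half-edge inclusions $\Ff^1\to\K$ land on the square-zero-endomorphism objects and that the induced inclusions $k[\delta]/(\delta^2)\hookrightarrow\K$ and $D\hookrightarrow\K$ are quasi-fully faithful with the asserted Verdier quotients. This uses the matrix-factorization model of Theorem~\ref{thm:matrixZ} together with interstice duality; granted it, the diagram chases above are routine, invoking \cite[4.4.2.1]{lurie:htt} for the pasting of (co)cartesian squares.
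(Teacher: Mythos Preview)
Your proposal is correct and follows essentially the same route as the paper: for \eqref{eq:push1} both vertical arrows become $H$-equivalences (the $1$-Segal decomposition on the left, $\AA^1$-invariance on the right), and for \eqref{eq:push3} one factors the attaching map through the square-zero adjunctions $(\Ff^1)^{\delta_i}$, uses Lemma~\ref{lem:devissage} on the left square, and uses quasi-full-faithfulness plus the localization sequence on the right square---your ``second formal fact'' is exactly what the paper unfolds by adding the third row of zeros and cofibers. One small caveat: the generator $t$ in $\Ff(\text{loop with tail})\simeq k[t]$ has degree $2r$, so the right vertical map in \eqref{eq:push1} is not literally of the form $A\to A[t]$ with $|t|=0$; the paper handles this by invoking \cite[Lemma 4.1]{tabuada:fundamental}, and you should cite or reproduce that reduction rather than appeal directly to the definition of $\AA^1$-homotopy invariance.
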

\begin{proof}
	 (1) Applying $j_!\Ff$ to \eqref{eq:push1}, with $X =j$, we obtain, by Example \ref{exa:kt}, the pushout diagram
	  \[
	    \begin{tikzcd}
	      \Ff^1 \amalg \Ff^1 \arrow{r} \arrow{d}{f} & \Ff^1 \arrow{d}{f'}\\
	      \Ff^2 \arrow{r} & \Ff^1[t] 
	    \end{tikzcd}
	  \]
  	where $f$ is a $1$-Segal map so that, by Proposition \ref{prop:exact}, $H(f)$ is an equivalence. The
	morphism $H(f')$ is an equivalence, by the argument of \cite[Lemma
	4.1]{tabuada:fundamental}, since $H$ is $\AA^1$-homotopy invariant. Therefore, the
	square remains a pushout square upon applying $H$. The statement for \eqref{eq:push2}
	follows from an argument similar to (2) below, but easier.
	
(2) Applying $j_!\Ff$ to \eqref{eq:push3}, with $X=j$, we obtain the pushout square 
	\[
	\begin{tikzpicture}[auto]
		\node (A11)  {$\Ff^1 \amalg \Ff^1$};

		\node (A12) [right= of A11] {$\Ff(\;$\tikz[scale=0.7,baseline=(current bounding
		box.center)]{
			  \filldraw[black] (270:.4) circle(1.5pt);
			  \draw (0,0) circle(0.4);
			  \draw (270:.4) -- (270:.9);
			  \draw (90:.1) -- (270:.4);
		}$\;)$}; 

		\node (A21) [below= of A11] {$\Ff(\Gamma')$};

		\node (A22) at (A12 |- A21) {$\Ff(\Gamma)$}; 

		\draw [->] (A11) -- (A12);
		\draw [->] (A11) -- (A21);
		\draw [->] (A21) -- (A22);
		\draw [->] (A12) -- (A22);
	\end{tikzpicture}
	\]
	which is the top rectangle of the diagram 
	\[
	\begin{tikzpicture}[auto]
		\node (A11)  {$\Ff^1 \amalg \Ff^1$};

		\node (A11p) [right= of A11]  {$(\Ff^1)^{\delta_1} \amalg (\Ff^1)^{\delta_2}$};

		\node (A12) [right= of A11p] {$\Ff(\;$\tikz[baseline=(current bounding box.center),
		scale=0.7]{
			  \filldraw[black] (270:.4) circle(1.5pt);
			  \draw (0,0) circle(0.4);
			  \draw (270:.4) -- (270:.9);
			  \draw (90:.1) -- (270:.4);
		}$\;)$};

		\node (A21) [below= of A11] {$\Ff(\Gamma')$};

		\node (A21p) at (A11p |- A21) {$((\Ff(\Gamma'))^{\delta_1})^{\delta_2}$};

		\node (A22) at (A12 |- A21) {$\Ff(\Gamma)$}; 

		\node (A31) [below= of A21] {$0$};

		\node (A31p) at (A11p |- A31) {$0$};

		\node (A32) at (A12 |- A31) {$\Ff(\;$\tikz[baseline=(current bounding box.center),scale=0.7]{
			  \filldraw[black] (270:.4) circle(1.5pt);
			  \filldraw[black] (90:.1) circle(1.5pt);
			  \filldraw[black] (270:.9) circle(1.5pt);
			  \draw (0,0) circle(0.4);
			  \draw (270:.4) -- (270:.9);
			  \draw (90:.1) -- (270:.4);
		}$\;)$}; 

		\draw [->] (A11) -- (A11p) node[midway,above] {$f$};
		\draw [right hook->] (A11p) -- (A12) node[midway,above] {$g$};
		\draw [->] (A11) -- (A21);
		\draw [->] (A11p) -- (A21p);
		\draw [->] (A21) -- (A21p) node[midway,above] {$f'$};
		\draw [right hook->] (A21p) -- (A22) node[midway,above] {$g'$};
		\draw [->] (A12) -- (A22);
		\draw [->] (A31) -- (A31p);
		\draw [->] (A31p) -- (A32);
		\draw [->] (A21) -- (A31);
		\draw [->] (A21p) -- (A31p);
		\draw [->] (A22) -- (A32);
	\end{tikzpicture}
	\]
	in which all squares are pushout squares, the exterior square is given \eqref{eq:push4} with
	$X = \Ff$. We use the notation from Lemma \ref{lem:devissage}
	with $|\delta_1| = 1-2r$, $|\delta_2| = 1+2r$ where $r$ is the winding number of the loop in
	the top right framed graph.
  	The morphisms $H(f)$ and $H(f')$ are equivalences by Lemma \ref{lem:devissage}. 
	The morphism $g$ is quasi-fully faithful by explicit verification (cf. Example
	\ref{ex:kroneckerZ}), and hence $g'$ is quasi-fully
	faithful by Lemma \ref{lem:qffpushout}. Therefore, the bottom right square and the right rectangle are exact
  	sequences in $\Lmo$ and thus stay exact after applying $H$. But this implies that the top
	right square stays a pushout square after applying $H$. Hence the top rectangle stays a
	pushout diagram after applying $H$ as claimed.
\end{proof}

\section{Delooping of paracyclic $1$-Segal objects}

The methods used in this section are inspired by similar techniques used in Chapter 6 of
\cite{lurie:htt}.
Let $\C$ be a stable $\infty$-category and let $H:\Lmo \to \C$ be a localizing $\AA^1$-homotopy invariant functor.
Given a framed graph $\Gamma$, Theorem \ref{thm:motivicMV} reduces the calculation of
$H(F^{\Gamma})$ to a state sum of the coparacyclic object
\[
	HF: \N(\pLambda) \lra \C
\]
which, by Proposition \ref{prop:exact}, is $1$-Segal. We establish a general result in order to
explicitly evaluate this state sum.

\begin{thm}\label{thm:segalstatesum} Let $\C$ be a stable $\infty$-category with colimits and let $X: \N(\pLambda) \to \C$ be a
	coparacyclic $1$-Segal object with $X^0 \simeq 0$. Let $\Gamma$ be a framed graph modelling a 
	stable framed marked surface $(S,M)$. Then we have an equivalence in $\C$
	\[
		X({\Gamma}) \simeq \Omega(X^1)(S/M) 
	\]
	where the right-hand side is defined using \eqref{eq:E}, and
	$\Omega =[-1]$ denotes the loop functor on $\C$.
\end{thm}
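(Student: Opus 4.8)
The plan is to identify the coparacyclic object $X$ with the object $\widetilde{L_E}$ of \eqref{eq:universal} (in its paracyclic incarnation, Remark \ref{rem:universal}) for the choice $E = \Omega X^1$, and then to feed this into the already-established geometric formula $\widetilde{L_E}(\Gamma) \simeq E(S/M)$. Recall from Section \ref{sec:universal} that $\widetilde{L_E} = E\circ \widetilde{L^{\bullet}}$, where $\widetilde{L^{\bullet}}\colon \N(\pLambda)\to \S_{\ast}^{\on{fin}}$ is the paracyclic pullback of the universal loop space and $\widetilde{L^{\, n}}$ is a bouquet of $n$ circles; since $E$ is exact one gets $(\widetilde{L_E})^{\, n}\simeq \bigoplus_{i=1}^n \Sigma E$ and $(\widetilde{L_E})^0\simeq 0$, so that, taking $E=\Omega X^1$, $\widetilde{L_{\Omega X^1}}$ is a coparacyclic $1$-Segal object with vanishing $0$-th term whose $1$-st term is canonically $X^1$. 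The crux of the proof is therefore a \emph{delooping lemma}: for a stable $\infty$-category $\C$ with colimits, evaluation at $\co{1}$ induces an equivalence between the $\infty$-category of coparacyclic $1$-Segal objects $X\colon \N(\pLambda)\to \C$ with $X^0\simeq 0$ and $\C$ itself, with quasi-inverse $Y\mapsto \widetilde{L_{\Omega Y}}$. Granting the lemma, we get $X\simeq \widetilde{L_{\Omega X^1}}$ as coparacyclic objects, hence $X\circ\delta\simeq \widetilde{L_{\Omega X^1}}\circ\delta$ for the coincidence diagram of any framed graph $\Gamma$; passing to state sums (Proposition \ref{prop:pointwise}) and invoking Remark \ref{rem:universal} then gives
\[
	X(\Gamma)\simeq \widetilde{L_{\Omega X^1}}(\Gamma)\simeq (\Omega X^1)(S/M)=\Omega(X^1)(S/M),
\]
which is the assertion.

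To prove the delooping lemma I would first record, exactly as in the proofs of Propositions \ref{prop:12segal} and \ref{prop:exact}, that the $1$-Segal condition together with $X^0\simeq 0$ produces for each $n\ge 1$ a canonical equivalence $X^n\xrightarrow{\ \sim\ }\bigoplus_{i=1}^n X^{\{i-1,i\}}$ onto the coproduct over the spine edges, and that $X^{\{i-1,i\}}\simeq X^1$ canonically. This pins down the underlying cosimplicial object of $X$ (it is reconstructed from $X^1$ and $X^0\simeq 0$ via the spine decomposition, functorially). The remaining and decisive point is that the full \emph{paracyclic} structure — the rotation operators and the $\ZZ$-torsor data — is likewise forced. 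For this I would use interstice self-duality of $\pLambda$ (Section \ref{sec:cyclically}, extended to $\bLambda_{\infty}$) to rewrite the structure maps as morphisms between interstice duals, which are determined by the explicit combinatorial description of morphisms in $\pLambda$; comparing with $\widetilde{L^{\bullet}}$ — whose $n$-th term realizes the spine of the disk $D/\{0,1,\dots,n\}$ together with its rotation by the marked points, so that the "wrap-around edge" $\{n,0\}$ maps to the total sum $X^1\to\bigoplus_{i=1}^n X^1$ — one checks that the level-wise equivalences above assemble into an equivalence of coparacyclic objects. This is where the techniques of \cite[Chapter 6]{lurie:htt} enter: the delooping functor $\C\to\{\text{coparacyclic }1\text{-Segal},\,X^0\simeq 0\}$ and the verification that it is inverse to evaluation at $\co 1$ proceed in much the same way as the passage between groupoid objects and their colimits.

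I expect the main obstacle to be precisely this last step: showing that the rotation and $\ZZ$-torsor data of a $1$-Segal coparacyclic object with $X^0\simeq 0$ are uniquely determined up to contractible choice, and that they coincide with the ones coming from rotating boundary marked points on the disk. The cosimplicial part is formal once the spine decomposition is in place, and the geometric input $\widetilde{L_E}(\Gamma)\simeq E(S/M)$ is already available (Propositions \ref{prop:universal}, \ref{prop:spectrum}, Remark \ref{rem:universal}); what must be controlled is the interaction of the $1$-Segal decomposition with the cyclic rotation — where the self-duality of $\pLambda$ and a careful accounting of the $\ZZ$-torsors (trivial in the cyclic case, genuinely present once framings are involved) do the work. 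A more hands-on alternative that bypasses the abstract delooping is to decompose $X(\Gamma)$ using the descent squares of Proposition \ref{prop:descent} (valid since $1$-Segal implies unital $2$-Segal) together with the graph induction of Theorem \ref{thm:motivicMV}, reducing to the corolla (disk) case where $X(\mathrm{corolla}_n)=X^n\simeq\bigoplus_{i=1}^n X^1\simeq (\Omega X^1)(\bigvee_n S^1)$; but the compatibility of this identification with the incidence maps is the same difficulty wearing a different hat.
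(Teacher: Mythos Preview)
Your overall architecture matches the paper exactly: reduce Theorem \ref{thm:segalstatesum} to a delooping statement $X\simeq \widetilde{L_{\Omega X^1}}$ for coparacyclic $1$-Segal objects with $X^0\simeq 0$, and then invoke Proposition \ref{prop:spectrum} and Remark \ref{rem:universal}. The paper states this reduction explicitly and isolates the delooping as Proposition \ref{prop:paraloop}.

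Where you diverge is in the proof of the delooping lemma, and this is exactly the point you flag as the obstacle. You propose to build level-wise equivalences $X^n\simeq\bigoplus^n X^1$ from the spine decomposition and then argue, via interstice duality and a comparison with the rotation on $D/\{0,\dots,n\}$, that the paracyclic structure maps are forced. As you anticipate, assembling those level-wise identifications into a natural equivalence of functors on $\N(\pLambda)$ is the hard part, and your sketch does not really close that gap.

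The paper circumvents this entirely by a Kan-extension trick. First it computes the limit of $X$ over $\N(\pLambda)$: one checks $\lim_{\N(\Delta)} X\simeq \Omega X^1$ (using that in the additive homotopy category the surjections $[n]\to[1]$ exhibit $X^n$ as a product, dual to the $1$-Segal coproduct), and then uses that $\N(\Delta)^{\op}\hookrightarrow\N(\pLambda)^{\op}$ is cofinal to get $\lim_{\N(\pLambda)} X\simeq \Omega X^1$. Now let $\apLambda$ be $\pLambda$ with an initial object $\emptyset$ adjoined, let $\J\subset\apLambda$ be the full subcategory on $\emptyset$ and $\langle 0\rangle$, and let $X^+$ be the limit cone. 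The key observation (Proposition \ref{prop:nerve}) is that $X^+$ is the \emph{left} Kan extension along $\N(\J)\hookrightarrow\N(\apLambda)$ of the arrow $\Omega X^1\to 0$: at $\langle 1\rangle$ this is the pushout square expressing $X^1\simeq\Sigma\Omega X^1$, and at $\langle n\rangle$ it is the $1$-Segal coproduct. Applying the same argument to $\widetilde{L_E}$ with $E=\Omega X^1$ shows it is the left Kan extension of the \emph{same} arrow, hence $X\simeq \widetilde{L_{\Omega X^1}}$.

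The payoff of the paper's route is that you never touch the rotation operators or the $\ZZ$-torsors directly: all the paracyclic structure is packaged into the single statement that $\J/\langle n\rangle$ has the right shape, and the comparison of $X$ with $\widetilde{L_{\Omega X^1}}$ happens at the level of the two-term diagram $\Omega X^1\to 0$, where it is trivial. Your spine-by-spine approach would eventually have to reproduce this, but with considerably more bookkeeping.
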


The results of Section \ref{sec:universal} reduce the proof of Theorem \ref{thm:segalstatesum} 
to the following statement:

\begin{prop}[Delooping]\label{prop:paraloop} Let $\C$ be a stable $\infty$-category. Let $\C$ be a stable $\infty$-category with
	colimits. Let $X: \N(\pLambda) \to \C$ be a coparacyclic $1$-Segal object with $X^0 \simeq
	0$. Then there is an equivalence 
	\[
		X \simeq L_{\Omega(X^1)}
	\]
	where the right-hand side denotes the coparacyclic object of \eqref{eq:universal}.
\end{prop}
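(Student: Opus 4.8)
The plan is to deduce the statement from a universal property of the paracyclic universal loop space. Write $E := \Omega(X^1)$; by \eqref{eq:universal} and Remark~\ref{rem:universal}, the right-hand side $L_{\Omega(X^1)}$ is the coparacyclic object $E \circ \widetilde{L^{\bullet}}$, where $E : \S_{\ast}^{\on{fin}} \to \C$ is the functor of \eqref{eq:E} and $\widetilde{L^{\bullet}} : \N(\pLambda) \to \S_{\ast}^{\on{fin}}$ is the pullback to $\pLambda$ of $L^{\bullet}$ (legitimate since each $L^n \simeq \bigvee_n S^1$ is a finite pointed space). Because $\widetilde{L^{\bullet}}$ is $1$-Segal with $\widetilde{L^0} \simeq \ast$ and $E$ preserves finite colimits, $L_{\Omega(X^1)}$ is again a coparacyclic $1$-Segal object with vanishing $0$-term, and its value at $1$ is $E(S^1) = \Sigma E \simeq X^1$, using that $\C$ is stable. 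So $X$ and $L_{\Omega(X^1)}$ are two coparacyclic $1$-Segal objects with vanishing $0$-term that agree at $1$, and it suffices to show that such objects are determined, functorially, by their value at $1$.

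Concretely, I would prove that restriction along $\widetilde{L^{\bullet}}$,
\[
	\Fun^{c}(\S_{\ast}^{\on{fin}}, \C) \lra \Fun(\N(\pLambda), \C), \qquad F \longmapsto F \circ \widetilde{L^{\bullet}},
\]
is fully faithful with essential image the full subcategory of coparacyclic $1$-Segal objects $Y$ with $Y^0 \simeq 0$. Granting this, together with the identification $\Sp(\C) = \Fun^{c}(\S_{\ast}^{\on{fin}},\C) \simeq \C$ of \cite[1.4.2.21]{lurie:algebra}, one writes $X \simeq F \circ \widetilde{L^{\bullet}}$ for an essentially unique spectrum object $F$, evaluates at $\widetilde{L^1} = S^1$ to get $F(S^0) \simeq \Omega F(S^1) \simeq \Omega(X^1) = E$, and concludes $X \simeq E \circ \widetilde{L^{\bullet}} = L_{\Omega(X^1)}$. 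One inclusion of the universal property is formal: a spectrum object $F$ is reduced and excisive, hence carries the pushout squares witnessing the $1$-Segal condition of $\widetilde{L^{\bullet}}$ to pushouts and sends the initial object $\widetilde{L^0}\simeq\ast$ to $0$. The content is the reverse direction, for which I would follow the delooping arguments for group and groupoid objects in Chapter~6 of \cite{lurie:htt}: one shows that $\widetilde{L^{\bullet}}$ realizes the point --- equivalently, via $S^0 \simeq \Omega S^1 \simeq \Omega\widetilde{L^1}$, the object $S^0$ --- as a canonical cobar-type resolution, so that any coparacyclic $1$-Segal $Y$ with $Y^0 \simeq 0$ is reconstructed from the single object $Y^1$ together with its ``co-group'' structure.

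In practice this should be a skeletal induction along $\N(\pLambda)$. The $1$-Segal condition already forces $Y^n \simeq (Y^1)^{\oplus n}$ for every $n$, the iterated pushouts along $Y^0 \simeq 0$ becoming direct sums in the stable category $\C$; the real task is to check that all coparacyclic face and degeneracy maps, and crucially the paracyclic rotation operators, are then determined up to contractible choice by the data in degrees $0$ and $1$, matching the corresponding structure on $\bigvee_n S^1$ under the identification above. I expect the main obstacle to be exactly this coherence check: for the underlying cosimplicial diagram it is the classical reconstruction of a loop object from its group structure (and is where stability of $\C$ is used, trivializing the potential gluing data), so the work is in propagating it through the combinatorics of $\bLambda_{\infty}$ --- the $\ZZ$-torsor data defining its morphisms and the associated winding numbers --- to see that the paracyclic refinement adds nothing. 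As in \cite[Ch.~6]{lurie:htt}, I anticipate this follows from a careful induction once the cases $n \le 1$ are pinned down, but managing the higher coherences is the delicate step.
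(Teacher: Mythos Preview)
Your strategy is sound and, if the equivalence you state were established, the proposition would follow exactly as you say; but the coherence check you flag as ``the delicate step'' is left as a plan rather than a proof, and that is precisely the work the paper's argument avoids. Rather than proving the full categorical equivalence between $\Fun^c(\S_*^{\on{fin}},\C)$ and the subcategory of coparacyclic $1$-Segal objects with vanishing $0$-term, the paper passes to the \emph{augmented} paracyclic category $\apLambda$ (adjoining an initial object $\emptyset$) and proves the targeted Proposition~\ref{prop:nerve}: for any such $X$, its limit cone $X^+$ is the \emph{left} Kan extension of its restriction to the two-object subcategory $\J = \{\emptyset \to \langle 0\rangle\}$. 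The value at $\emptyset$ is identified with $\Omega(X^1)$ via Corollary~\ref{prop:limp} (which in turn uses the cofinality Lemma~\ref{lem:cofinal} and Proposition~\ref{prop:lim}); the pointwise Kan extension formula then reduces at $\langle 1\rangle$ to the pushout square $\Sigma\Omega(X^1)\simeq X^1$ (stability) and at $\langle n\rangle$ to the $1$-Segal coproduct decomposition. Applying Proposition~\ref{prop:nerve} once to $X$ and once to $L_{\Omega(X^1)}$ exhibits both as $j_!\bigl(p^*(\Omega(X^1)\to 0)\bigr)$, hence equivalent. Your approach would package the same content as a stronger functorial statement, at the cost of the coherence bookkeeping you anticipate; the paper's Kan-extension trick trades your inductive reconstruction for the uniqueness of $j_!$, so no paracyclic combinatorics beyond the slice computations ever enter.
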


\begin{rem} Let $M$ be a monoid object in an abelian category $\A$. Then it is easy to see that 
	the monoid structure must be given by the unit $0 \lra M$ and multiplication $M \otimes M
	\to M$ given by the sum. In particular, $M$ is automatically a group object with inverse
	given by $-\id: M \to M$. Proposition \ref{prop:paraloop} is an $\infty$-categorical analog of
	this statement where the abelian category $\A$ gets replaced by the stable $\infty$-category
	$\C^{\op}$. In this context, the statement becomes even nicer since we can describe the group object 
	in terms of the delooping $\Omega X^1$. We also note that the statement of Proposition
	\ref{prop:paraloop} becomes false if we replace the paracyclic category by the cyclic
	category -- there may be various inequivalent cocyclic structures on a cosimplicial $1$-Segal
	object. 
\end{rem}

We establish some preparatory results for the proof of Proposition \ref{prop:paraloop}.

\begin{prop}\label{prop:lim} Let $\C$ be a stable $\infty$-category and let $X: \N(\Delta) \to \C$ be a cosimplicial
	$1$-Segal object. Then $\lim_{\N(\Delta)} X \simeq \Omega X_1$.
\end{prop}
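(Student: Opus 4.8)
The plan is to show that the cosimplicial object $X$ is right Kan extended from its restriction to the $1$-truncated simplex category, and to read off the totalization from there. Write $i\colon\N(\Delta_{\leq 1})\hookrightarrow\N(\Delta)$ for the inclusion of the full subcategory on $[0]$ and $[1]$. Throughout I would use the hypothesis $X^{0}\simeq 0$ (which is in force in the setting of Proposition~\ref{prop:paraloop}, for which this statement is needed; it cannot be omitted, e.g.\ constant cosimplicial objects are $1$-Segal but satisfy $\lim_{\N(\Delta)}X\simeq X^{0}$). Granting it, iterating the defining $1$-Segal pushouts along the spine yields an equivalence $X^{\{0,1\}}\amalg_{X^{\{1\}}}\cdots\amalg_{X^{\{n-1\}}}X^{\{n-1,n\}}\xrightarrow{\ \sim\ }X^{n}$; since each vertex term $X^{\{k\}}\simeq X^{0}\simeq 0$ turns these pushouts into coproducts, this says that the spine map $\bigoplus_{j=1}^{n}X(\iota_{j})\colon(X^{1})^{\oplus n}\to X^{n}$ is an equivalence, where $\iota_{j}\colon[1]\hookrightarrow[n]$ is the inclusion of the $j$-th edge $\{j-1,j\}$.

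The main step is to verify that the unit $X\to\operatorname{Ran}_{i}(i^{*}X)$ of the $(i^{*},\operatorname{Ran}_{i})$-adjunction is an equivalence; the pointwise right Kan extension exists since the comma categories involved are finite and $\C$ is stable, so the claim can be checked objectwise. Fix $[n]$. The comma category $\bigl([n]\!\downarrow\! i\bigr)$ has objects the pairs $([k],\psi\colon[n]\to[k])$ with $k\leq 1$; the $n$ order-preserving surjections $\phi_{j}\colon[n]\twoheadrightarrow[1]$ with $\phi_{j}^{-1}(0)=\{0,\dots,j-1\}$ span a discrete full subcategory, while every morphism not lying in it is sent by $X^{(-)}$ to a map factoring through $X^{0}\simeq 0$ — this kills the contributions of the object with $[0]$ and of the two constant maps $[n]\to[1]$. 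Hence $\bigl(\operatorname{Ran}_{i}i^{*}X\bigr)^{n}\simeq\lim_{([n]\downarrow i)}X^{(-)}\simeq\prod_{j=1}^{n}X^{1}$, and under this identification the unit map is $\bigl(X(\phi_{j})\bigr)_{1\leq j\leq n}\colon X^{n}\to(X^{1})^{\oplus n}$. Precomposing with the spine equivalence $\bigoplus_{j}X(\iota_{j})$ gives an endomorphism of $(X^{1})^{\oplus n}$ whose $(j,j')$-entry is $X(\phi_{j}\iota_{j'})$; since $\phi_{j}\iota_{j}=\operatorname{id}_{[1]}$ and $\phi_{j}\iota_{j'}$ is a constant map when $j\neq j'$ (hence goes to a map through $X^{0}\simeq 0$), this endomorphism is the identity, so $\bigl(X(\phi_{j})\bigr)_{j}$ — and with it the unit — is an equivalence.

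With this established I would finish by the composition law for right Kan extensions: $\lim_{\N(\Delta)}\circ\operatorname{Ran}_{i}\simeq\lim_{\N(\Delta_{\leq 1})}$, both sides being right adjoint to the constant-diagram functor $\C\to\Fun(\N(\Delta_{\leq 1}),\C)$. Thus $\lim_{\N(\Delta)}X\simeq\lim_{\N(\Delta_{\leq 1})}(i^{*}X)$, now a finite limit. Inspecting the cone conditions over the truncated cosimplicial object $i^{*}X$ shows that a cone with apex $c$ is the same as a map $c\to X^{0}$ equalizing the two cofaces $d^{0},d^{1}\colon X^{0}\to X^{1}$ (the rest of the data being forced and the rest of the conditions redundant), so $\lim_{\N(\Delta_{\leq 1})}(i^{*}X)\simeq\operatorname{fib}(d^{0}-d^{1}\colon X^{0}\to X^{1})$; as $X^{0}\simeq 0$ this is $\operatorname{fib}(0\to X^{1})\simeq\Omega X^{1}$.

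The step I expect to be the main obstacle is the objectwise identification of $\operatorname{Ran}_{i}i^{*}X$ above: it requires organizing the finite but fiddly combinatorics of $\bigl([n]\!\downarrow\! i\bigr)$, checking that $X^{0}\simeq 0$ collapses everything down to the $n$-fold product of copies of $X^{1}$, and — crucially — verifying that the natural comparison map is a genuine equivalence and not merely an objectwise isomorphism; this is precisely where the $1$-Segal spine decomposition and the identities $\phi_{j}\iota_{j'}=\operatorname{id}_{[1]}$ or constant enter. An essentially equivalent route would instead use the cosimplicial totalization tower $\lim_{\N(\Delta)}X\simeq\lim_{n}\operatorname{Tot}_{n}X$, with $\operatorname{fib}(\operatorname{Tot}_{n}X\to\operatorname{Tot}_{n-1}X)\simeq\Omega^{n}N^{n}X$, reducing the claim to the conormalization computation $N^{0}X\simeq 0$, $N^{1}X\simeq\operatorname{fib}(s^{0}\colon X^{1}\to X^{0})\simeq X^{1}$, and $N^{n}X\simeq 0$ for $n\geq 2$ (this last because $X^{n}\to M^{n}X$ is an equivalence, again by the $1$-Segal property and $X^{0}\simeq 0$), giving $\operatorname{Tot}_{n}X\simeq\operatorname{Tot}_{1}X\simeq\Omega X^{1}$ for all $n\geq 1$.
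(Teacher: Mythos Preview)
Your proposal is correct and follows essentially the same route as the paper: restrict to the full subcategory $\I=\Delta_{\le 1}$ spanned by $[0]$ and $[1]$, show that $X$ is the right Kan extension of $X|_{\N(\I)}$ by verifying that the $n$ surjections $[n]\twoheadrightarrow[1]$ exhibit $X^n$ as an $n$-fold product of $X^1$, and check this via the matrix $(\phi_j\iota_{j'})$ against the $1$-Segal coproduct decomposition (identity on the diagonal, constant hence null off it, computed in the additive homotopy category). Your explicit equalizer computation of $\lim_{\N(\Delta_{\le 1})}$ and your remark that the hypothesis $X^0\simeq 0$ is genuinely needed (the paper uses it in the proof without stating it) are both on point; the alternative $\operatorname{Tot}$-tower argument you sketch is a valid variant but not what the paper does.
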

\begin{proof} Let $\I$ be the full subcategory of $\Delta$ spanned by the objects $[0]$ and $[1]$.
	Let $\I_{+}$ denote the category obtained from $\I$ by adjoining an initial object. We have
	a diagram of right Kan extension functors
	\begin{equation}\label{eq:right-kan}
			\xymatrix{
				\Fun(\N(\I), \C) \ar[r]\ar[d] & \Fun(\N(\I_+), \C)\ar[d]\\
			\Fun(\N(\Delta), \C) \ar[r] & \Fun(\N(\Delta_+), \C) }
	\end{equation}
	which commutes up to equivalence. We show that $X$ is a right Kan extension of
	$X_{|\N(\I)}$. The statement of the proposition then follows immediately from
	\eqref{eq:right-kan}. 
	
	Since $X^0 \simeq 0$, the cosimplicial object $X$ is a right Kan extension of $X_{|\N(\I)}$ if and only if, for
	every $n \ge 2$, the maps $X_n \to X_1$ induced by the $n$ surjective maps $p_i: [n] \mapsto
	[1]$ exhibit $X^n$ as a product
	\begin{equation}\label{eq:kan}
			X^n \simeq X^1 \times X^1 \times \dots \times X^1.
	\end{equation}
	with $n$ factors. This can be verified in the homotopy category $\h
	\C$ which is additive. Since $X$ is $1$-Segal, the maps $[1] \to [n]$ given by the inclusions $q_j:
	\{j,j+1\} \subset [n]$ exhibit $X^n$ as a coproduct of $n$ copies of $X^1$. The morphisms $X^1
	\to X^1$ induced by $p_i \circ q_j$ are homotopic to the identity on $X_1$ if $i = j$ and
	zero if $i \ne j$. This implies the equivalence \eqref{eq:kan}.
\end{proof}

Let $i: \Delta \to \pLambda$ denote the natural inclusion functor.

\begin{lem}\label{lem:cofinal} The opposite inclusion functor
	\[
		\N(i)^{\op}: \N(\Delta)^{\op} \lra \N(\pLambda)^{\op}
	\]
	is cofinal. 
\end{lem}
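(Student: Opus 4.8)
The plan is to verify cofinality through the $\infty$-categorical form of Quillen's Theorem~A, \cite[4.1.3.1]{lurie:htt}: the functor $\N(i)^{\op}$ is cofinal precisely when, for every object $\cn$ of $\pLambda$, the simplicial set
\[
	\mathcal{E}_n \;=\; \N(\Delta)^{\op} \times_{\N(\pLambda)^{\op}} \bigl( \N(\pLambda)^{\op} \bigr)_{\cn/}
\]
is weakly contractible. Since all the categories in play are ordinary, $\mathcal{E}_n$ is, up to passing to an opposite (which does not change the weak homotopy type), the nerve of the category $\mathcal{D}_n$ whose objects are the morphisms $\phi\colon[m]\to\cn$ of $\pLambda$ and whose morphisms $([m],\phi)\to([m'],\phi')$ are those $\delta\colon[m]\to[m']$ in $\Delta$ with $\phi'\circ i(\delta)=\phi$. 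So it suffices to prove that $\N(\mathcal{D}_n)$ is weakly contractible.

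The key step is to describe $\mathcal{D}_n$ combinatorially. Recall (cf. \cite{dk:crossed}) that a morphism $[m]\to\cn$ in $\pLambda$ is the same datum as a weakly monotone map $\phi\colon\ZZ\to\ZZ$ with $\phi(x+m+1)=\phi(x)+n+1$, and that $i$ sends a monotone map $\delta$ to its unique periodic extension $\widetilde\delta$ of this type. Restricting $\phi$ to the fundamental domain $\{0,1,\dots,m\}$ identifies $\Hom_{\pLambda}([m],\cn)$ with the set of weakly monotone maps $g\colon\{0,\dots,m\}\to\ZZ$ such that $g(m)-g(0)\le n+1$, and it turns the relation $\phi'\circ i(\delta)=\phi$ into $g'\circ\delta=g$. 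This identifies $\mathcal{D}_n$ with the category of simplices $\Delta_{/K_n}$ of the simplicial subset
\[
	K_n \;\subseteq\; \N(\ZZ,\le)
\]
of the nerve of the poset $\ZZ$ spanned by the chains $a_0\le a_1\le\dots\le a_m$ with $a_m-a_0\le n+1$; since this bound is stable under faces and degeneracies, $K_n$ is indeed a simplicial subset.

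Two standard facts then conclude the argument. First, for any simplicial set $L$ the nerve of its category of simplices $\Delta_{/L}$ is weakly equivalent to $L$; with $L=K_n$ this reduces us to showing that $K_n$ is weakly contractible. Second, $K_n$ is the union of the subcomplexes $\Delta^{\{k,k+1,\dots,k+n+1\}}$ for $k\in\ZZ$, each of which is a simplex, and every finite intersection of these is again a face of this shape (possibly empty). Writing $K_n$ as the increasing union of the finite subcomplexes $K_n\cap\N(\{-j,\dots,j\})$ for $j\ge 0$, and decomposing each of them as the union of the closed star of an extremal vertex with the previous term, one checks by induction on $j$ (using the gluing lemma for homotopy pushouts) that each of these finite subcomplexes is weakly contractible; hence so is $K_n$, a filtered colimit of weakly contractible simplicial sets along cofibrations.

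The main obstacle is this last contractibility statement, which is also the reason the paracyclic category is needed. The complex $K_n$ is a thickened form of the poset $\ZZ$, and it is contractible only because $\ZZ$ is; running the same computation for the cyclic category replaces $\ZZ$ by $\ZZ/(n+1)$ and $K_n$ by a complex realizing a circle. This is precisely why the delooping Proposition~\ref{prop:paraloop} holds for coparacyclic but not for cocyclic $1$-Segal objects.
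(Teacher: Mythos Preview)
Your argument is correct and follows the same overall strategy as the paper: both invoke \cite[4.1.3.1]{lurie:htt} and reduce to the weak contractibility of the slice, which amounts to the weak contractibility of the simplicial set $[m]\mapsto\Hom_{\pLambda}([m],\cn)$ restricted along $\Delta\subset\pLambda$ (your $K_n$). The paper also makes this identification but then finishes in one line by citing \cite{fiedorowicz-loday}, where the geometric realization of this simplicial set is shown to be homeomorphic to $|\Delta^n|\times\RR$. You instead prove contractibility by hand, covering $K_n$ by the simplices $\Delta^{\{k,\dots,k+n+1\}}$ and running an exhaustion-by-pushouts induction. Your route is more elementary and self-contained, and your closing remark about the cyclic case is exactly the point: the analogous realization for $\Lambda$ is $|\Delta^n|\times S^1$, which is why the argument would fail there. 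The paper's route is shorter but leans on a reference.
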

\begin{proof} By \cite[4.1.3.1]{lurie:htt}, it suffices to show that, for every object $\cm$ of
	$\pLambda$, the nerve of the category $\Delta/\cm$ has a contractible geometric realization.
	The nerve of the category $\Delta/\cm$ can be identified with the simplicial set 
	\[
		\Hom_{\pLambda}(-,\cm)_{|\Delta^{\op}}
	\]
	whose geometric realization is homeomorphic to $|\Delta^m| \times \mathbb R$ (cf.
	\cite{fiedorowicz-loday}).
\end{proof}

\begin{cor} \label{prop:limp} Let $\C$ be a stable $\infty$-category and let $X: \N(\pLambda) \to \C$
	be a coparacyclic $1$-Segal object. Then $\lim_{\N(\pLambda)} X \simeq \Omega X_1$.
\end{cor}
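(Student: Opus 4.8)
The plan is to reduce the assertion to Proposition~\ref{prop:lim} by means of the cofinality established in Lemma~\ref{lem:cofinal}. First I would note that, since cofinal functors are compatible with the formation of colimits \cite{lurie:htt}, the statement of Lemma~\ref{lem:cofinal} dualizes: for any diagram $X\colon\N(\pLambda)\to\C$, restriction along the inclusion $i\colon\Delta\to\pLambda$ induces a canonical equivalence
\[
	\lim_{\N(\pLambda)} X \;\simeq\; \lim_{\N(\Delta)}\bigl(X\circ\N(i)\bigr),
\]
in the sense that one side exists in $\C$ if and only if the other does, in which case they agree. Concretely, one views $\lim$ in $\C$ as $\colim$ in $\C^{\op}$ and applies Lemma~\ref{lem:cofinal} to $X^{\op}\colon\N(\pLambda)^{\op}\to\C^{\op}$.

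Next I would observe that $X\circ\N(i)$ is exactly the cosimplicial object underlying the coparacyclic $1$-Segal object $X$, and is therefore a cosimplicial $1$-Segal object in $\C$. Proposition~\ref{prop:lim} then applies and gives $\lim_{\N(\Delta)}\bigl(X\circ\N(i)\bigr)\simeq\Omega X_1$; in particular this limit exists in $\C$, since the proof of Proposition~\ref{prop:lim} identifies it with a finite limit over the full subcategory of $\Delta$ on the objects $[0]$ and $[1]$, and such limits are available in any stable $\infty$-category. As $X_1$ denotes the value of the underlying cosimplicial object on $[1]$, combining the two equivalences yields $\lim_{\N(\pLambda)} X\simeq\Omega X_1$, as desired.

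I do not expect a genuine obstacle here: all the substance is already contained in Proposition~\ref{prop:lim} and Lemma~\ref{lem:cofinal}, and the corollary is a formal consequence of the two. The only point demanding a little care is the variance bookkeeping in the first step -- Lemma~\ref{lem:cofinal} is phrased for the \emph{opposite} inclusion $\N(\Delta)^{\op}\to\N(\pLambda)^{\op}$, so it must be invoked so as to compute a \emph{limit} of $X$ (equivalently, a colimit in $\C^{\op}$), after which the finiteness reduction inside the proof of Proposition~\ref{prop:lim} is what guarantees that the limit actually exists in the stable $\infty$-category $\C$.
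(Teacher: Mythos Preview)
Your proposal is correct and follows essentially the same approach as the paper, which simply states that the corollary is ``immediate from Proposition~\ref{prop:lim} and Lemma~\ref{lem:cofinal}.'' Your additional remarks on the variance bookkeeping (passing to $\C^{\op}$ to turn the cofinality statement into one about limits) and on the existence of the limit make explicit exactly what the paper leaves to the reader.
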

\begin{proof} Immediate from Proposition \ref{prop:lim} and Lemma \ref{lem:cofinal}.
\end{proof}

We denote by $\apLambda$ the {\em augmented paracyclic category} obtained from $\pLambda$ by adjoining an initial
object $\emptyset$. Consider the full subcategory $j: \N(\J) \subset \N(\apLambda)$ spanned by the objects $\emptyset$ 
and $\langle 0 \rangle$ and the map $p: \N(\J) \to \Delta^1$ which maps the unique edge $\emptyset \to \langle 0
\rangle$ to $\{0,1\}$. 


\begin{prop}\label{prop:nerve} Let $\C$ be a stable $\infty$-category and let $X: \N(\pLambda) \to \C$ be a
	coparacyclic $1$-Segal object with $X^0 \simeq 0$. Let $X^+$ denote the limit cone of $X$,
	i.e., the right Kan extension of $X$ along $\N(\pLambda) \to \N(\apLambda)$. Then there is an
	equivalence 
	\[
		X \simeq j_!(p^*(\Omega(X^1) \lra 0)).
	\]
	of coparacyclic objects. 
\end{prop}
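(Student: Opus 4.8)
The plan is to exhibit the limit cone $X^+\colon\N(\apLambda)\to\C$ as a left Kan extension of its own restriction to $\N(\J)$ along $j$, and to identify that restriction with $p^*(\Omega(X^1)\to 0)$. Since $X^+$ is by construction the right Kan extension of $X$ along the fully faithful inclusion $\N(\pLambda)\hookrightarrow\N(\apLambda)$, it restricts back to $X$; so once these two facts are in hand we obtain an equivalence $X^+\simeq j_!\bigl(p^*(\Omega(X^1)\to 0)\bigr)$ in $\Fun(\N(\apLambda),\C)$, and restricting along $\N(\pLambda)\hookrightarrow\N(\apLambda)$ produces the asserted equivalence of coparacyclic objects.

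First I would compute $X^+|_{\N(\J)}$. As $j$ is fully faithful this is the datum of the objects $X^+(\emptyset)$, $X^+(\langle 0\rangle)$ and the morphisms between them in $\C$. By definition of the limit cone $X^+(\langle 0\rangle)=X^0\simeq 0$, while $X^+(\emptyset)=\lim_{\N(\pLambda)}X$ because $\emptyset$ is initial, and $\lim_{\N(\pLambda)}X\simeq\Omega(X^1)$ by Corollary \ref{prop:limp}. The unique arrow $\emptyset\to\langle 0\rangle$ is sent to the essentially unique morphism $\Omega(X^1)\to 0$, and the endomorphisms of $\langle 0\rangle$ in $\J$ act on the zero object $X^0$ through the identity; hence $X^+|_{\N(\J)}$ factors through $p$ and is equivalent to $p^*(\Omega(X^1)\to 0)$.

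The substance of the argument is the verification that $X^+$ is a left Kan extension of $X^+|_{\N(\J)}$ along $j$, which by the pointwise criterion \cite{lurie:htt} reduces to checking, for every object $\cm$ of $\apLambda$, that the canonical map $\colim(X^+\circ\pi)\to X^+(\cm)$ is an equivalence, where $\pi\colon\N(\J)\times_{\N(\apLambda)}\N(\apLambda)_{/\cm}\to\N(\J)$ is the projection. For $\cm\in\{\emptyset,\langle 0\rangle\}$ this is automatic from full faithfulness of $j$. For $\cm=\cn$ with $n\ge 1$ one analyses the comma category: its only object over $\emptyset$ is the pair $(\emptyset,!)$, which admits a unique morphism to each object lying over $\langle 0\rangle$, while the objects over $\langle 0\rangle$, indexed by $\Hom_{\apLambda}(\langle 0\rangle,\cn)$, fall into $n+1$ contractible connected components, one for each vertex of $\cn$. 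Hence the inclusion of the ``star'' with centre $(\emptyset,!)$ and one leaf chosen in each component is cofinal, and $X^+\circ\pi$ restricts on that star to the diagram with value $X^+(\emptyset)\simeq\Omega(X^1)$ at the centre and $X^+(\langle 0\rangle)\simeq 0$ at each of the $n+1$ leaves. Because $\C$ is stable, the colimit of such a star is $\operatorname{cofib}\bigl((\Omega X^1)^{\oplus(n+1)}\xrightarrow{\ \nabla\ }\Omega X^1\bigr)\simeq(\Sigma\Omega X^1)^{\oplus n}\simeq(X^1)^{\oplus n}$, and the comparison cocone is assembled from the $n+1$ vertex maps together with the canonical map $\lim_{\N(\pLambda)}X\to X^n$. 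It remains to match this with the cocone exhibiting $X^n$ as the iterated pushout $X^{\{0,1\}}\amalg_{X^{\{1\}}}X^{\{1,2\}}\amalg_{X^{\{2\}}}\dots\amalg_{X^{\{n-1\}}}X^{\{n-1,n\}}$ furnished by the $1$-Segal condition together with $X^{\{i\}}\simeq X^0\simeq 0$; both are cocones under diagrams of copies of $X^1$ glued along zero objects, and a direct comparison of the two shapes shows the map in question is an equivalence.

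I expect the last matching step to be the main obstacle: one must control the comma category $\N(\J)\times_{\N(\apLambda)}\N(\apLambda)_{/\cn}$ inside the augmented paracyclic category and check, coherently in the coparacyclic variable, that the ``star-shaped'' colimit centred at the augmentation object $\emptyset$ reassembles --- via the stable identity $0\amalg_{A}0\simeq\Sigma A$ turning the delooping $\Omega(X^1)$ back into $X^1$ --- into the ``spine-shaped'' colimit dictated by the $1$-Segal property. Granting the pointwise criterion, $X^+\simeq j_!\bigl(p^*(\Omega(X^1)\to 0)\bigr)$ as functors on $\N(\apLambda)$, and restricting along $\N(\pLambda)\hookrightarrow\N(\apLambda)$ gives $X\simeq j_!\bigl(p^*(\Omega(X^1)\to 0)\bigr)$, as required.
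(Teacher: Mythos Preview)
Your approach is essentially the same as the paper's: identify $X^+|_{\N(\J)}$ with $p^*(\Omega X^1\to 0)$ via Corollary~\ref{prop:limp}, then verify the pointwise left Kan extension criterion at each $\cn$. Your analysis of the comma category $\J/\cn$ as (equivalent to) a star with centre $(\emptyset,!)$ and $n+1$ contractible leaf-components is correct and makes explicit what the paper leaves implicit.

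The one place where the two write-ups differ in emphasis is precisely your ``matching step.'' The paper does not unwind the star-shaped colimit through the cofiber of a fold map; instead it treats $\langle 1\rangle$ separately---where the pointwise formula is literally the pushout square
\[
\begin{tikzcd}
\Omega(X^1)\arrow{r}\arrow{d} & 0\arrow{d}\\
0\arrow{r} & X^1
\end{tikzcd}
\]
in $\C$---and then for general $\cn$ asserts that the pointwise formula reduces to the statement that the $n$ coface inclusions $X^{\{i-1,i\}}\to X^{\cn}$ exhibit $X^n$ as an $n$-fold coproduct, which is exactly the $1$-Segal condition. In other words, the paper short-circuits your star computation by observing that once the $n=1$ case is settled, the remaining identification \emph{is} the $1$-Segal decomposition, with no separate matching needed. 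Your route via $\operatorname{cofib}\bigl((\Omega X^1)^{\oplus(n+1)}\xrightarrow{\nabla}\Omega X^1\bigr)$ reaches the same object $(X^1)^{\oplus n}$ but then has to argue that the comparison cocone agrees with the Segal cocone; a cleaner way to close this is to note that the cofinal zigzag $l_0\leftarrow c_1\to l_1\leftarrow\cdots\to l_n$ inside your star computes the colimit as the iterated pushout $0\amalg_{\Omega X^1}0\amalg_{\Omega X^1}\cdots\amalg_{\Omega X^1}0$, whose comparison map to $X^n$ is, piece by piece, the $n=1$ square above inserted along each edge $\{i-1,i\}\subset\cn$. That recovers the paper's one-line reduction and resolves the obstacle you flagged.
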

\begin{proof}
	By Proposition \ref{prop:limp}, we have 
	\[
		X^+(\emptyset) \simeq \Omega(X^1)
	\]
	and by assumption $X^+(\langle 0 \rangle) \simeq 0$ so that we may exhibit $X^+$ is a left extension of
	$p^*(\Omega(X^1) \lra 0)$ along $\N(\J) \to \N(\pLambda)$. We use the pointwise formula 
	to show that it is a left Kan extension. For the object $\langle 1 \rangle$, this amounts
	to the statement that the square
	\[
		\xymatrix{
			\Omega(X^1) \ar[r]\ar[d] & 0 \ar[d] \\
		0 \ar[r] & X^1  }
	\]
	is a pushout square which holds since $\C$ is stable. For the object $\cn$, the pointwise
	formula reduces to the requirement that the $n$ coface maps $X^1 \to X^n$ exhibit $X^n$ as an
	$n$-fold coproduct of copies of $X^1$. This holds since $X$ is assumed $1$-Segal.
\end{proof}

\begin{proof}[Proof of Proposition \ref{prop:paraloop}]
	Let $\pi: \pLambda \to \Lambda$ be the covering functor. Consider $L^{\bullet}$ from
	\eqref{eq:uniloop} as a coparacyclic $1$-Segal pointed space via pullback along $\pi$ (cf.
	Remark \ref{rem:universal}). Given an object $E \in \C$, the functor $E(-)$ from
	\eqref{eq:E} commutes with finite colimits so that the coparacyclic object $E(L^{\bullet})$
	in $\C$ is $1$-Segal. We have $E(L^1) \simeq E(S^1) \simeq E[1]$ so that, by Proposition
	\ref{prop:nerve}, we have 
	\[
		L_E \simeq E(L^{\bullet}) \simeq j_!(p^*(E \lra 0)).
	\]
	Combining this with another application of Proposition \ref{prop:nerve} to the given coparacyclic $1$-Segal
	object $X$, yields the desired equivalence
	\[
		X \simeq j_!(p^*(\Omega(X^1) \lra 0)) \simeq L_{\Omega(X^1)}.
	\]
\end{proof}

\section{Main result and examples}

Combining Theorem \ref{thm:motivicMV} and Theorem \ref{thm:segalstatesum}, we obtain the following main result of this work.

\begin{thm}\label{thm:main} Let $\C$ be a stable $\infty$-category with colimits and let
\[
	H: \Lmo \lra \C
\]
be a localizing $\AA^1$-homotopy invariant. Let $\Gamma$ be a framed graph which models a stable framed
marked surface $(S,M)$ and let $\Ff(S,M)$ denote its $\ZZ$-graded topological Fukaya category. Then
there is an equivalence
\[
	H(\Ff(S,M)) \simeq \Sigma^{\infty}(S/M) \otimes H(k)[-1].
\]
\end{thm}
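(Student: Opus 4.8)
The plan is to obtain the asserted formula as a formal consequence of the Mayer--Vietoris theorem (Theorem~\ref{thm:motivicMV}) and the delooped state sum computation (Theorem~\ref{thm:segalstatesum}), together with a short identification of the coefficient object.

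First I would invoke Theorem~\ref{thm:motivicMV}: since $H \colon \Lmo \to \C$ is a localizing $\AA^1$-homotopy invariant with values in a stable $\infty$-category, the framed graph $\Gamma$ satisfies Mayer--Vietoris with respect to $H$, so the comparison morphism \eqref{eq:canmor} is an equivalence and
\[
  H(\Ff(S,M)) = H(\Ff(\Gamma)) \simeq (H\Ff)(\Gamma),
\]
the state sum on $\Gamma$ of the coparacyclic object $H\Ff \colon \N(\pLambda) \to \C$ obtained by postcomposing the coparacyclic object $\Ff$ of Theorem~\ref{thm:matrixZ} with $H$. Next I would set $X := H\Ff$ and check the hypotheses of Theorem~\ref{thm:segalstatesum}. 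The object $X$ is $1$-Segal by Proposition~\ref{prop:exact}. For the remaining inputs I would use Remark~\ref{rem:anZ}: the dg category $\Ff^0$ is Morita trivial (being Morita equivalent to $A^0_{\ZZ}$, the $\ZZ$-graded $k$-linear envelope of the empty poset), so $X^0 = H(\Ff^0) \simeq 0$ since $H$ preserves finite sums; and $\Ff^1$ is Morita equivalent to $A^1_{\ZZ} \simeq k$, the dg category with one object and endomorphism ring $k$, whence $X^1 \simeq H(k)$. Theorem~\ref{thm:segalstatesum} then yields
\[
  (H\Ff)(\Gamma) \simeq \Omega(X^1)(S/M) \simeq \bigl(H(k)[-1]\bigr)(S/M),
\]
where the right-hand side is the value on the finite pointed space $S/M$ of the excisive functor $\S_{\ast}^{\on{fin}} \to \C$ of \eqref{eq:E} attached to $H(k)[-1]$.

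Finally I would reconcile this with the stated form. By its construction via the equivalence $\Sp(\C) \simeq \C$, the functor \eqref{eq:E} attached to an object $E$ of $\C$ is the reduced excisive extension of $E$, and its value on a finite pointed space $K$ is the tensor $\Sigma^\infty K \otimes E$ of the suspension spectrum of $K$ with $E$ under the canonical finitely-cocontinuous action of finite spectra on $\C$; applying this with $E = H(k)[-1]$ and $K = S/M$ (a finite pointed space, e.g.\ by the description of $L(\Gamma)$ in Proposition~\ref{prop:universal}) gives the claimed equivalence $H(\Ff(S,M)) \simeq \Sigma^\infty(S/M) \otimes H(k)[-1]$. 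I expect the only step requiring genuine care to be the verification that $X = H\Ff$ meets the hypotheses of Theorem~\ref{thm:segalstatesum}, and in particular the correct pinning down of $X^0$ and $X^1$; everything else amounts to assembling results already proved and to translating the ``relative homology'' notation $E(X/Y)$ into the tensor $\Sigma^\infty(-) \otimes E$. Note also that the framing of $(S,M)$ has disappeared from the answer, reflecting the insensitivity of $\AA^1$-invariant functors to the winding numbers it records, as already seen in Lemmas~\ref{lem:devissage} and~\ref{lem:hpush}.
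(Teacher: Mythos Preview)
Your proposal is correct and follows the same route as the paper: the paper's proof is simply the sentence ``Combining Theorem~\ref{thm:motivicMV} and Theorem~\ref{thm:segalstatesum}, we obtain the following main result,'' and you have spelled out precisely that combination, including the identifications $X^0 \simeq 0$, $X^1 \simeq H(k)$ via Remark~\ref{rem:anZ} and the translation of $E(S/M)$ into the tensor notation $\Sigma^\infty(S/M) \otimes E$. There is nothing to add.
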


As a specific application, we obtain:

\begin{cor} Let $k$ be a field of characteristic $0$. We have the formulas
	\begin{align*}
		\HP_0( \Ff(S,M)) & \cong  H_1(S,M;k)\\
		\HP_1( \Ff(S,M)) & \cong  H_2(S,M;k)
	\end{align*}
	for periodic cyclic homology over $k$.
\end{cor}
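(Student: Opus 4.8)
This is a specialization of Theorem \ref{thm:main}, and the plan is simply to spell out that specialization and compute homotopy groups.

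First I would invoke the fact, recorded in the introduction (see \cite{keller:invariance,tabuada-vdbergh}), that periodic cyclic homology over a field $k$ of characteristic $0$ defines a localizing $\AA^1$-homotopy invariant
\[
	\HP\colon \Lmo \lra \C
\]
with values in the stable $\infty$-category $\C$ of $2$-periodic complexes of $k$-vector spaces (the derived $\infty$-category of $k[u^{\pm 1}]$-modules). Applying Theorem \ref{thm:main} with $H=\HP$ then yields an equivalence
\[
	\HP(\Ff(S,M)) \;\simeq\; \Sigma^{\infty}(S/M)\otimes \HP(k)[-1]
\]
in $\C$, so it remains only to compute the homotopy groups of the right-hand side.

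Next I would identify the two tensor factors. The object $\HP(k)$ is the monoidal unit of $\C$, with $\pi_n(\HP(k))\cong k$ for $n$ even and $\pi_n(\HP(k))=0$ for $n$ odd. The pointed space $S/M$ is a finite CW complex whose reduced homology is the relative homology $\widetilde H_i(S/M;k)\cong H_i(S,M;k)$; since $M$ is nonempty and meets every connected component of $S$ we have $H_0(S,M;k)=0$, and since $S$ is a surface $H_i(S,M;k)=0$ for $i\ge 3$, so the only possibly nonzero relative homology groups are $H_1(S,M;k)$ and $H_2(S,M;k)$.

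Finally I would read off homotopy groups. Because $k$ is a field there are no $\operatorname{Tor}$ terms, so smashing with the $2$-periodic coefficient object computes $2$-periodic reduced homology:
\[
	\pi_n\bigl(\Sigma^{\infty}(S/M)\otimes \HP(k)\bigr)\;\cong\;\bigoplus_{i\,\equiv\, n\!\!\pmod 2} H_i(S,M;k).
\]
Using that $[-1]=\Omega$, so that $\pi_n(Y[-1])\cong\pi_{n+1}(Y)$, this gives $\HP_n(\Ff(S,M))\cong\bigoplus_{i\,\equiv\, n+1\!\!\pmod 2} H_i(S,M;k)$, which by the vanishing above collapses to $H_1(S,M;k)$ for $n=0$ and to $H_2(S,M;k)$ for $n=1$; the remaining degrees are determined by $2$-periodicity of $\HP$. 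There is no real obstacle here beyond degree bookkeeping: one must combine the shift $[-1]$ in Theorem \ref{thm:main} with the parity of $\HP(k)$ so that $\HP_0$ detects $H_1$ and $\HP_1$ detects $H_2$ rather than the other way around. Everything else is formal.
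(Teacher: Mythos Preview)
Your argument is correct and is exactly the intended specialization: the paper states this corollary immediately after Theorem~\ref{thm:main} without further proof, and you have simply (and accurately) spelled out the degree bookkeeping---identifying $\HP(k)$ as the $2$-periodic unit, using stability of $(S,M)$ to get $H_0(S,M;k)=0$, and combining the shift with the parity of $\HP(k)$---that the paper leaves implicit.
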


\begin{exa} We compute some explicit examples. 
	\begin{enumerate} 
		\item Let $(S,M) = (S^2, \{0,\infty\})$ be a $2$-sphere with two marked
			points. Then, equipping the punctured sphere with the standard framing with winding number
			$0$, we have $\Ff(S,M) \simeq \Perf(\AA^1 \setminus \{0\})$. In agreement with
			Hochschild-Kostant-Rosenberg, we obtain
			\begin{align*}
				\HP_0( \Ff(S,M)) & \cong  H_1(S,M;k) \cong k\\
				\HP_1( \Ff(S,M)) & \cong  H_2(S,M;k) \cong k.
			\end{align*}
		\item Let $(S,M) = (T, \{0\})$ be a once marked torus equipped with standard
			framing. We have a Morita equivalence
			\[
				\Ff(S,M) \simeq \D^b(\coh(C))
			\]
			where $C$ denotes a nodal plane cubic. We obtain
			\begin{align*}
				\HP_0( \Ff(S,M)) & \cong  H_1(S,M;k) \cong k^2\\
				\HP_1( \Ff(S,M)) & \cong  H_2(S,M;k) \cong k.
			\end{align*}
	\end{enumerate}
\end{exa}

\bibliographystyle{amsalpha} 
\bibliography{refs} 

\end{document}